\newtheorem{thm}{Theorem}[section]
\newtheorem{lem}[thm]{Lemma}
\newtheorem{cor}[thm]{Corollary}
\newtheorem{pro}[thm]{Proposition}
\theoremstyle{definition}
\newtheorem{ex}[thm]{Example}
\newtheorem{rmk}[thm]{Remark}
\newtheorem{defi}[thm]{Definition}
\newcommand{\nc}{\newcommand}
\newcommand{\delete}[1]{}
\nc{\mlabel}[1]{\label{#1}}  % Use this to suppress names
\nc{\mcite}[1]{\cite{#1}}  % Use this to suppress names
\nc{\mref}[1]{\ref{#1}}  % Use this to suppress names
\nc{\meqref}[1]{\eqref{#1}}  % Use this to suppress names
\nc{\mbibitem}[1]{\bibitem{#1}} % Use this to show number
\nc{\mlabel}[1]{\label{#1}{\hfill \hspace{1cm}{\bf{{\ }\hfill(#1)}}}}
\nc{\mcite}[1]{\cite{#1}{{\bf{{\ }(#1)}}}}  % Use this lines to show names
\nc{\mref}[1]{\ref{#1}{{\bf{{\ }(#1)}}}}  % Use this lines to show names
\nc{\meqref}[1]{\eqref{#1}{{\bf{{\ }(#1)}}}}  % Use this lines to show names
\nc{\mbibitem}[1]{\bibitem[\bf #1]{#1}} % Use this to show name
\DeclareMathOperator{\im}{Im}
\newcommand {\emptycomment}[1]{}
\nc{\oprn}{\theta}
\nc{\calo}{\mathcal{O}}
\nc{\oop}{$\mathcal{O}$-operator\xspace}
\nc{\oops}{$\mathcal{O}$-operators\xspace}
\nc{\mrho}{{\bm{\varrho}}}
\nc{\bfk}{\mathbf{K}}
\nc{\invlim}{\displaystyle{\lim_{\longleftarrow}}\,}
\nc{\ot}{\otimes}
\nc{\desc}{descendent\xspace}
\nc{\eval}[1]{\Big|_{#1}}
\newcommand{\B }{\mathfrak{B}}
\newcommand{\lon }{\,\rightarrow\,}
\newcommand{\be }{\begin{equation}}
\newcommand{\ee }{\end{equation}}
\newcommand{\g}{\mathfrak g}
\newcommand{\G}{\mathbb G}
\nc{\RR}{\mathbb{R}}
\nc{\CC}{\mathbb{C}}
\newcommand{\huaB}{\mathcal{B}}%{{\mathcal{E}}}%{\mathcal{B}}
\newcommand{\huaA}{\mathcal{A}}%{{\mathcal{F}}}%{\mathcal{A}}
\newcommand{\huaG}{\mathcal{G}}
\newcommand{\huaH}{\mathcal{H}}
\newcommand{\CWM}{C^{\infty}(M)}
\newcommand{\frkB}{\mathfrak B}
\newcommand{\frkC}{\mathfrak C}
\newcommand{\frkD}{\mathfrak D}
\newcommand{\frkX}{\mathfrak X}
\newcommand{\Courant}[1]{\left\llbracket  #1\right\rrbracket }
\newcommand{\br}[1]{   [ \cdot,    \cdot  ]   }
\newcommand{\id}{\mathsf{id}}
\newcommand{\Der}{\mathrm{Der}}
\newcommand{\Ad}{\mathrm{Ad}}
\newcommand{\Aut}{\mathrm{Aut}}
\newcommand{\gl}{\mathfrak {gl}}
\nc{\CV}{\mathbf{C}}
\begin{document}

\title[Integration and geometrization of Rota-Baxter Lie algebras]{Integration and geometrization of Rota-Baxter Lie algebras}

\author{Li Guo}
\address{
Department of Mathematics and Computer Science,
         Rutgers University,
         Newark, NJ 07102}
\email{liguo@rutgers.edu}

\author{Honglei Lang}
\address{Department of Applied Mathematics, China Agricultural University, Beijing, 100083, China
}
\email{hllang@cau.edu.cn}

\author{Yunhe Sheng}
\address{Department of Mathematics, Jilin University, Changchun 130012, Jilin, China}
\email{shengyh@jlu.edu.cn}

\date{\today}

\begin{abstract}
This paper first introduces the notion of a Rota-Baxter operator (of weight $1$) on a Lie group so that its differentiation gives a Rota-Baxter operator on the corresponding Lie algebra. Direct products of Lie groups, including the decompositions of Iwasawa and Langlands, carry natural Rota-Baxter operators. Formal inverse of the Rota-Baxter operator on a Lie group is precisely the crossed homomorphism on the Lie group, whose tangent map is the differential operator of weight $1$ on a Lie algebra. A factorization theorem of Rota-Baxter Lie groups is proved, deriving directly on the Lie group level, the well-known global factorization theorems of Semenov-Tian-Shansky in his study of integrable systems. As  geometrization, the notions of Rota-Baxter Lie algebroids and Rota-Baxter Lie groupoids are introduced, with the former a differentiation of the latter. Further, a Rota-Baxter Lie algebroid naturally gives rise to a post-Lie algebroid, generalizing the well-known fact for Rota-Baxter Lie algebras and post-Lie algebras. It is shown that the geometrization of a Rota-Baxter Lie algebra or a Rota-Baxter Lie group can be realized by its action on a manifold. Examples and applications are provided for these new notions.
\end{abstract}

\subjclass[2010]{22A22, 22E60, 17B38, 17B40, 81R12, 58C35}

\keywords{Rota-Baxter Lie group,  Rota-Baxter Lie groupoid, Rota-Baxter Lie algebroid, post-Lie algebroid, integration}

\maketitle

\tableofcontents

\section{Introduction}

We define Rota-Baxter operators on Lie groups, Lie algebroids and Lie groupoids to give the integration and geometrization of Rota-Baxter operators on Lie algebras. The relationship of Rota-Baxter Lie algebroids with post-Lie algebroids is established.

\subsection{Lie groupoids and Lie algebroids}

Lie groups and Lie algebras are fundamental notions in mathematics. Their relationship, with a Lie algebra as the tangent of a Lie group, plays a major role in the study of geometry, algebra and mathematical physics. See for example~\cite{H,K}.

Lie groupoids incorporate diverse objects, including Lie groups, manifolds, Lie group actions and equivalence relations. They have played important roles in mathematical physics and noncommutative geometry. In particular, Lie groupoids unify internal and external symmetries and are used to describe singular quotient spaces in noncommutative geometry.
The notion of a Lie algebroid was introduced by Pradines~\cite{Pra} in 1967, as a generalization of Lie algebras and tangent bundles. Just like Lie algebras being the
infinitesimal objects of Lie groups, Lie algebroids are the infinitesimal objects of Lie groupoids. Different from the case of Lie algebras and Lie groups, in general there is an obstruction class in integrating a Lie algebroid to a Lie groupoid \cite{crainic}. A natural class of Lie algebroids and Lie groupoids are given by the actions of Lie algebras and Lie groups on manifolds. See \mcite{Mkz:GTGA} for the general theory of Lie algebroids and Lie groupoids.

These notions and operations can be summarized in the diagram
\begin{equation}
\begin{split}
\xymatrix{
	\text{Lie algebras} \ar^{\text{geometrization}}[rrr] &&& \text{Lie algebroids}&&&\text{Lie algebras}\ar_{\text{action}}[lll]\\
	\text{Lie groups} \ar^{\text{geometrization}}[rrr] \ar^{\text{differentiation}}[u] &&& \text{Lie groupoids} \ar^{\text{differentiation}}[u]&&&\text{Lie groups.}\ar_{\text{action}}[lll] \ar^{\text{differentiation}}[u]
}
\end{split}
\mlabel{eq:reln}
\end{equation}

\subsection{Classical Yang-Baxter equations and Rota-Baxter Lie algebras}

The classical Yang-Baxter equation (CYBE) arose from the study of
inverse scattering theory in the 1980s and then was recognized as the
``semi-classical limit" of the quantum Yang-Baxter equation following the works of C.~N. Yang~\mcite{Ya} and R.~J. Baxter~\mcite{BaR}.
CYBE is further related to classical integrable
systems and quantum groups~\cite{CP}.

An important method  in studying the CYBE is the interpretation of it in various operator forms, beginning with the pioneering work of  Semenov-Tian-Shansky~\mcite{STS} who showed
that if there exists a nondegenerate symmetric invariant bilinear
form on a Lie algebra $(\g,[\cdot,\cdot]_\g)$ and if a solution of
the CYBE is skew-symmetric, then the solution can be equivalently expressed
as a linear operator $B:\g\to \g$ satisfying the operator identity
\begin{equation}\notag
	[B(u), B(v)]_\g=B([B(u),v]_\g)+B([u,B(v)]_\g),\;\;\forall u, v\in
	\g.\end{equation}
Also introduced in~\cite{STS} is the {\bf modified Yang-Baxter equation}
\begin{equation}
	\label{eq:mybe}
	 [R(u),R(v)]_\g=R([R(u),v]_\g)+R([u,R(v)]_\g)-[u,v]_\g, \quad \forall u, v\in \g,
\end{equation}
from which the author obtained an Infinitesimal Factorization Theorem for the Lie algebra $\g$ and, after integration, his well-known Global Factorization Theorem for the corresponding Lie group, with important applications to integrable systems~\mcite{FRS,Iz,Li,LT,RS,RS1,RS2}.
Under the transformation $R=\id+2B$, the operator $R$ satisfies the modified Yang-Baxter equation if and only if the operator $B$ is the special case when $\lambda=1$ of a {\bf Rota-Baxter operator of weight $\lambda$}, characterized by the operator identity
\begin{equation}
[B(u), B(v)]_\g=B([B(u),v]_\g)+B([u,B(v)]_\g)+\lambda B([u,v]_\g),\;\;\forall u, v\in
\g,
\mlabel{eq:rbo}
\end{equation}
for any scalar $\lambda$.
A Lie algebra $(\g,[\cdot,\cdot]_\g)$ with a Rota-Baxter operator $B$ of weight $\lambda$ is
called a {\bf Rota-Baxter Lie algebra} of weight $\lambda$, denoted by $(\g,[\cdot,\cdot]_\g,B)$.

As a remarkable coincidence, the associative analogs of Rota-Baxter operators had been introduced by G. Baxter in 1960 in his probability study~\mcite{Bax} and pursued further by Atkinson, Cartier and foremost Rota from the perspectives of analysis and combinatorics~\cite{At,Ca,Ro1,Ro2}. The remarkable renascence of the subject in this century has led to broad applications, especially in the Connes-Kreimer approach to renormalization of quantum field theory~\cite{CK,Gub}.
	
A Rota-Baxter Lie algebra with weight, and more generally an $\mathcal{O}$-operator~\cite{Bor,Ku}, naturally gives rise to a pre-Lie algebra or a post-Lie algebra which has its origin in a study of operads~\mcite{Val} as a special case of the splitting of Lie algebras~\mcite{BBGN}. Pre-Lie algebras and post-Lie algebras  play important roles  in integrable systems and numerical integrations~\mcite{BGN,Munthe-Kaas-Lundervold}.
See \mcite{Bu,BG,Burde16,Burde19} for more details about pre-Lie algebras and  post-Lie algebras. Recently, the notion of post-Lie algebroids was introduced and applied to numerical integrations~\cite{Munthe-Kaas-Lundervold,MSV}.

\subsection{Rota-Baxter operators for other Lie structures}
In light of the critical roles played by Lie groups, Lie groupoids and Lie algebroids, as well as their relationship with Lie algebras as shown in the diagram~(\mref{eq:reln}) on the one hand, and Rota-Baxter Lie algebras and post-Lie algebras on the other, it is desirable to combine these notions and to extend the relationship among the Lie structures such as the one in~\meqref{eq:reln}. In particular, the fundamental Global Factorization Theorem of Semenov-Tian-Shansky for a Lie group was obtained from integrating his Infinitesimal Factorization Theorem for a Lie algebra, making use of the modified Yang-Baxter equation (equivalently, a Rota-Baxter operator of weight $1$). Thus it is natural to ask whether is a Rota-Baxter operator on the Lie group, so that the Global Factorization Theorem can be proved directly on the Lie group level. These are the questions we address in this paper.

The obvious challenge in giving the notion of a Rota-Baxter Lie group is that a Lie group does not have the linear structure required for the original Rota-Baxter relation in~(\mref{eq:rbo}). We use the adjoint map
to define an operator on a Lie group with a suitable operator identity~\meqref{RBgroup} that has properties similar to Rota-Baxter operators of weight $1$ on Lie algebras, and such that the differentiation of such an operator on a Lie group is a Rota-Baxter operator of weight $1$ on the corresponding Lie algebra, thereby justifying naming it a Rota-Baxter operator (of weight $1$) on a Lie group. The notion of a Rota-Baxter operator of weight $-1$ on a Lie group is also defined though our focus in this paper will be on the weight $1$ case. We also show that the well-known notion of a crossed homomorphism on a group serves the purpose of a differential operator of weight $1$ on a Lie group since it is a formal inverse of the Rota-Baxter operator and its tangent map is the usual differential operator of weight $1$ on a Lie algebra.
We further show that the Rota-Baxter operator on a Lie group shares some important properties of the Rota-Baxter operator on Lie algebra and give the aforementioned Global Factorization Theorem of Semenov-Tian-Shansky for Lie groups.

We further generalize the notion of a Rota-Baxter operator to Lie algebroids by equipping a Lie algebroid with a bundle map satisfying a Rota-Baxter type identity. The well-known fact that a Rota-Baxter operator on a Lie algebra gives rise to a post-Lie algebra finds its analog for Rota-Baxter operators on Lie algebroids. We show that the action Lie algebroid of the induced Lie algebra by a Rota-Baxter operator always admits an action post-Lie algebroid structure, which can be viewed as a particular case of    \cite[Theorem 4.4]{MSV}.
By geometrizing the notion of a Rota-Baxter operator on a Lie group, we obtain the notion of a Rota-Baxter operator on a Lie groupoid and show that the differentiation of a Rota-Baxter operator on a Lie groupoid is a Rota-Baxter operator on a Lie algebroid, as expected.

We summarize the constructions and relations in diagram~\eqref{eq:summary}, enriching and extending diagram~\eqref{eq:reln}. The italic terms and dotted arrows are the ones introduced in this paper.

\begin{equation}
\begin{split}
\xymatrix{
{\text{\small{Lie algebras}}}\ar^{\text{geometrization}}[rr] && {\text{\small{Lie algebroids}}} && {\text{\small actions of} \atop\text{\small{Lie algebras}}}\ar_{\text{action}}[ll] \\
\text{\small post-Lie}\atop\text{\small algebras} \ar^{\text{geometrization}}[rr] \ar[u]!U|(.45){\text{subjacent}}  && {\text{\small  post-Lie}\atop \text{\small algebroids}} \ar[u]!U|(.45){\text{subjacent}} && \text{\em\small  actions of} \atop \text{\em\small  post-Lie algebras} \ar@{.>}_{\text{action}}[ll] \ar[u]!U|(.4){\text{subjacent}}  \\
{\text{\small Rota-Baxter} \atop \text{\small Lie algebras}} \ar@{.>}^{\text{geometrization}}[rr] \ar[u]!U|(.35){\text{splitting}}   && {\text{\em\small  Rota-Baxter} \atop \text{\it\small  Lie algebroids}} \ar@{.>}[u]!U|(.35){\text{splitting}}
&&  \text{\it\small  actions of Rota-Baxter} \atop \text{\it\small  Lie algebras} \ar@{.>}_{\text{action}}[ll] \ar@{.>}[u]!U|(.35){\text{splitting}} \\
\text{\it\small  Rota-Baxter}\atop \text{\it\small  Lie groups}\ar@{.>}[d]!U|(.55){\text{\desc}} \ar@{.>}[u]!U|(.4){\text{differentiation}} \ar@{.>}^{\text{geometrization}}[rr] && \text{\it\small  Rota-Baxter}\atop \text{\it\small  Lie groupoids}\ar@{.>}[d]!U|(.55){\text{\desc}}\ar@{.>}[u]!U|(.4){\text{differentiation}} && \text{\it\small  actions of Rota-Baxter} \atop \text{\it\small  Lie groups} \ar@{.>}[d]!U|(.6){\text{\desc}}\ar@{.>}_{\text{action}}[ll] \ar@{.>}[u]!U|(.4){\text{differentiation}}\\
{\text{\small{Lie groups}}}\ar^{\text{geometrization}}[rr] \ar@/^{5pc}/[uuuu]!U|(.4){\text{differentiation}}&& {\text{\small{Lie groupoids}}} && {\text{\small actions of} \atop\text{\small Lie groups}}\ar_{\text{action}}[ll] \ar@/_5.5pc/[uuuu]!U|(.4){\text{differentiation}}\\
}
\end{split}
\label{eq:summary}
\end{equation}

\subsection{Outline of the paper}
The paper is organized as follows. In Section~\mref{sec:rblg}, we first introduce the notion of a Rota-Baxter operator on a Lie group and show that projections to a direct factor Lie group gives a Rota-Baxter operator, leading to examples from the Iwasawa decomposition and Langlands decomposition.
We then justify this notion by showing that the differentiation of a Rota-Baxter operator on a Lie group is a Rota-Baxter operator on a Lie algebra (Theorem~\mref{main1}). A Rota-Baxter operator on a Lie group is also characterized by its graph, and taking the differentiation is also compatible with the derived product from a Rota-Baxter action. Differential Lie groups are also introduced.

In Section~\mref{sec:fact}, we establish a factorization theorem for Rota-Baxter Lie groups (Theorem~\mref{thm:factgp}) which derives the factorization theorem of Semenov-Tian-Shansky for Lie groups.

In Section~\mref{sec:oid}, we introduce the geometric aspect into our study. First a Rota-Baxter Lie algebroid is defined for a vector bundle, recovering a Rota-Baxter Lie algebra when the vector bundle reduces to a vector space. A Rota-Baxter Lie algebroid is shown to give a post-Lie algebroid introduced in~\mcite{Munthe-Kaas-Lundervold} (Theorem~\mref{thm:rbpost}). In particular, the notions of actions of Rota-Baxter Lie algebras and of post-Lie algebras on manifolds are introduced to produce Rota-Baxter Lie algebroids and post-Lie algebroids. Next we introduce the notion of a Rota-Baxter Lie groupoid and show that it is  the integration of a Rota-Baxter Lie algebroid  (Theorem~\mref{thm:algebroid}). The notion of an action  of a Rota-Baxter Lie group  on a manifold  is introduced, which gives rise to an action Rota-Baxter Lie groupoid and is compatible with the action Rota-Baxter Lie algebroid under differentiation.

\section{Rota-Baxter Lie groups and differential Lie groups}
\mlabel{sec:rblg}

In this section, we introduce the notions of   Rota-Baxter Lie groups and  differential Lie groups and show that they are the integrations of Rota-Baxter Lie algebras (of weight $1$) and differential Lie algebras (of weight $1$) respectively.

\subsection{Definition of Rota-Baxter Lie groups and examples}
We first give the notion of a Rota-Baxter Lie group and provide some examples.

Let $G$ be a group. For any $g\in G,$ define the {\bf adjoint action}
$$\Ad_g:G\longrightarrow G, \quad \Ad_g h:= ghg^{-1},\quad \forall~ h\in G.$$
The following two formulas will be frequently applied in the sequel:
\begin{eqnarray}
\mlabel{eq:f1}\Ad_g(h_1h_2)&=&\Ad_g h_1 \Ad_g h_2,\quad \forall~ h_1, h_2\in G,\\
\mlabel{eq:f2}\Ad_{g_1}\circ\Ad_{g_2}&=&\Ad_{g_1g_2},\quad \forall~ g_1, g_2\in G.
\end{eqnarray}

Here is a general notion.
\begin{defi}
A {\bf Rota-Baxter group} is a group $G$ with a map $\B:G\longrightarrow G$ such that
\begin{eqnarray}\mlabel{RBgroup}
\B(g_1)\B(g_2)=\mathfrak{B}(g_1\Ad_{\B(g_1)} g_2),\qquad \forall~ g_1,g_2\in G,
\end{eqnarray}
called the {\bf Rota-Baxter relation} for  groups.
\end{defi}

When $G$ is an abelian group, Eq.~\eqref{RBgroup} states that $\B$ is a group homomorphism.
We will mainly consider Lie groups in this paper.

\begin{defi}
Let $G$ be a Lie group. A smooth map $\B:G\longrightarrow G$ satisfying relation \eqref{RBgroup} is called a {\bf Rota-Baxter operator} on $G$.   A {\bf Rota-Baxter Lie group} is a Lie group  equipped with a Rota-Baxter operator.
\end{defi}

\begin{defi}
Let $(G,\B)$ and $(G',\B')$ be Rota-Baxter Lie groups. A smooth map $\Phi:G\longrightarrow G'$ is a {\bf Rota-Baxter Lie group homomorphism} if $\Phi$ is a Lie group homomorphism such that $$\Phi\circ \B=\B'\circ \Phi.$$
\end{defi}

The scalar multiplication by $-1$ on a Lie algebra is a Rota-Baxter operator of weight $1$. Correspondingly for a Lie group, we have
\begin{ex}
The inverse map $(\cdot)^{-1}:G\longrightarrow G$ is a Rota-Baxter operator on a Lie group $G$. Indeed the right hand side of Eq.~\eqref{RBgroup} equals to \[(\Ad_{g_1^{-1}} g_2)^{-1} g_1^{-1}=g_1^{-1} g_2^{-1} g_1 g_1^{-1}=g_1^{-1} g_2^{-1},\] which is exactly the left hand side of Eq.~\eqref{RBgroup}.
\end{ex}

For a Lie algebra $\g$, if $B:\g\to \g$ is a Rota-Baxter operator of weight $1$, then $-\id-B:\g\to \g$ is also a Rota-Baxter operator of weight $1$ on $\g$.  For a Lie group we have a similar result, noting that $-\id-B=-\id+B(-\id)$.
\begin{pro}
\mlabel{pro:adj}
If $\B:G\to  G$ is a Rota-Baxter operator on a Lie group $G$, then $\tilde{\B}: G\to G$ defined by $\tilde{\B}(g)=g^{-1}\B(g^{-1})$ is also a Rota-Baxter operator on $G$.
\end{pro}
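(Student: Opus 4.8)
The plan is to verify the Rota-Baxter relation \eqref{RBgroup} for $\tilde{\B}$ by a direct group-theoretic computation that reduces it to the relation already known for $\B$. The guiding observation is the remark preceding the statement: on the Lie algebra level $-\id-B = -\id + B(-\id)$ presents the new operator as $B$ precomposed with the weight-$1$ Rota-Baxter operator $-\id$, and the group avatar of $-\id$ is the inverse map $g\mapsto g^{-1}$ (the first Example). Accordingly $\tilde{\B}(g)=g^{-1}\B(g^{-1})$ should be read as ``invert, then apply $\B$, then twist by a left translation'', and the whole verification should collapse to invoking \eqref{RBgroup} for $\B$ at the arguments $g_1^{-1}$ and $g_2^{-1}$.

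Concretely, I would first simplify the twisted argument on the right-hand side. Expanding $\Ad_{\tilde{\B}(g_1)}g_2=\tilde{\B}(g_1)\,g_2\,\tilde{\B}(g_1)^{-1}$ and using that the leading $g_1$ cancels the factor $g_1^{-1}$ inside $\tilde{\B}(g_1)=g_1^{-1}\B(g_1^{-1})$, one obtains
$$ g_1\,\Ad_{\tilde{\B}(g_1)}g_2 = \Ad_{\B(g_1^{-1})}(g_2)\,g_1. $$
Taking inverses and using \eqref{eq:f1} (so that $\Ad_{\B(g_1^{-1})}(g_2)^{-1}=\Ad_{\B(g_1^{-1})}(g_2^{-1})$) then gives $\big(g_1\,\Ad_{\tilde{\B}(g_1)}g_2\big)^{-1}=g_1^{-1}\,\Ad_{\B(g_1^{-1})}(g_2^{-1})$.

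The crucial step is to recognize this last expression as \emph{exactly} the argument appearing in \eqref{RBgroup} for $\B$ evaluated at the pair $(g_1^{-1},g_2^{-1})$, namely $g_1^{-1}\,\Ad_{\B(g_1^{-1})}(g_2^{-1})$. Hence \eqref{RBgroup} yields
$$ \B\big(g_1^{-1}\,\Ad_{\B(g_1^{-1})}(g_2^{-1})\big) = \B(g_1^{-1})\,\B(g_2^{-1}). $$
Substituting into $\tilde{\B}\big(g_1\,\Ad_{\tilde{\B}(g_1)}g_2\big) = \big(g_1\,\Ad_{\tilde{\B}(g_1)}g_2\big)^{-1}\,\B\big((g_1\,\Ad_{\tilde{\B}(g_1)}g_2)^{-1}\big)$ and writing the adjoint factor as $\Ad_{\B(g_1^{-1})}(g_2^{-1})=\B(g_1^{-1})\,g_2^{-1}\,\B(g_1^{-1})^{-1}$, the inner $\B(g_1^{-1})^{-1}$ cancels against the $\B(g_1^{-1})$ produced above, collapsing the product to $g_1^{-1}\B(g_1^{-1})\,g_2^{-1}\B(g_2^{-1})=\tilde{\B}(g_1)\tilde{\B}(g_2)$, which is the desired left-hand side.

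The argument is entirely elementary and involves no analytic subtlety, since $\tilde{\B}$ is manifestly smooth as a composite of the smooth maps $\B$, inversion, and multiplication. The only genuine content — and the one place where the bookkeeping could go wrong — is matching the inverted twisted argument with the Rota-Baxter argument of $\B$ at $(g_1^{-1},g_2^{-1})$; every other manipulation is a cancellation governed by \eqref{eq:f1} and \eqref{eq:f2}.
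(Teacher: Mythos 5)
Your proposal is correct and follows essentially the same route as the paper's proof: both reduce the verification to computing $g_1\Ad_{\tilde{\B}(g_1)}g_2$, inverting it to recognize the Rota-Baxter argument $g_1^{-1}\Ad_{\B(g_1^{-1})}(g_2^{-1})$ for $\B$ at $(g_1^{-1},g_2^{-1})$, applying \eqref{RBgroup}, and cancelling. The bookkeeping in your intermediate identities checks out, so there is nothing to add.
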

\begin{proof}
We show that $\tilde{\B}$ satisfies \eqref{RBgroup}. In fact, for any $g,h\in G$,
\begin{eqnarray*}
\tilde{\B}(g\Ad_{\tilde{\B}(g)} h)&=&\tilde{\B}(gg^{-1}\B(g^{-1})h\B(g^{-1})^{-1}g)\\ &=&g^{-1}\B(g^{-1})h^{-1}\B(g^{-1})^{-1}\B(g^{-1}\B(g^{-1})h^{-1}\B(g^{-1})^{-1})\\ &=&g^{-1}\B(g^{-1})h^{-1}\B(g^{-1})^{-1}\B(g^{-1})\B(h^{-1})\\ &=&\tilde{\B}(g)\tilde{\B}(h),
\end{eqnarray*}
where for the second to the last equation we applied the Rota-Baxter relation~\eqref{RBgroup} of $\B$.
\end{proof}

The construction of solutions for the modified Yang-Baxter equation in \mcite{STS} motivated the following
important class of Rota-Baxter Lie algebras.

\begin{lem}\mlabel{lem:RBP}
Let $\g$ be an arbitrary Lie algebra and let $\g_{+}$ and $\g_{-}$ be its Lie subalgebras such that $\g=\g_+\oplus \g_{-}$ as vector spaces.
Denote by $P_+$ and $P_-$ the projections from $\g$ to $\g_+$ and $\g_-$ respectively. Then      $-P_+$ and $-P_-$ are Rota-Baxter operators of weight $1$.
\end{lem}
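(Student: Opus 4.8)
The plan is to verify directly that $B := -P_+$ satisfies the weight $1$ Rota-Baxter identity~\meqref{eq:rbo}, namely $[B(u),B(v)]_\g = B([B(u),v]_\g) + B([u,B(v)]_\g) + B([u,v]_\g)$ for all $u,v\in\g$, and then to obtain the statement for $-P_-$ essentially for free using a result already recorded in the excerpt.

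First I would fix $u,v\in\g$ and decompose them along $\g=\g_+\oplus\g_-$, writing $u=u_++u_-$ and $v=v_++v_-$ with $u_\pm=P_\pm(u)$, $v_\pm=P_\pm(v)$, so that $B(u)=-u_+$ and $B(v)=-v_+$. The left-hand side of~\meqref{eq:rbo} is then simply $[-u_+,-v_+]_\g=[u_+,v_+]_\g$. For the right-hand side I would expand the three brackets $[B(u),v]_\g$, $[u,B(v)]_\g$, $[u,v]_\g$ in terms of the four components $[u_+,v_+]_\g$, $[u_+,v_-]_\g$, $[u_-,v_+]_\g$, $[u_-,v_-]_\g$ and sum them; a short bookkeeping of signs shows that the mixed terms $[u_+,v_-]_\g$ and $[u_-,v_+]_\g$ cancel, leaving the argument of $B$ equal to $-[u_+,v_+]_\g+[u_-,v_-]_\g$.

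The decisive step is where the subalgebra hypothesis enters. Since $\g_+$ and $\g_-$ are Lie subalgebras, $[u_+,v_+]_\g\in\g_+$ and $[u_-,v_-]_\g\in\g_-$; hence $P_+$ annihilates the second term and fixes the first, so applying $B=-P_+$ to $-[u_+,v_+]_\g+[u_-,v_-]_\g$ returns $[u_+,v_+]_\g$, matching the left-hand side. This is really the only place any structural input is used, and it is the heart of the argument: without the subalgebra condition the cross terms would neither cancel nor land in the correct summand.

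Finally, rather than repeat the computation for $-P_-$, I would invoke the fact recorded just before this lemma that $-\id-B$ is a weight $1$ Rota-Baxter operator whenever $B$ is. Since $P_++P_-=\id$, one has $-P_-=-\id+P_+=-\id-(-P_+)$, so $-P_-$ is the image of the already-verified operator $-P_+$ under $B\mapsto-\id-B$ and is therefore a Rota-Baxter operator of weight $1$ as well. I expect no serious obstacle here; the only care needed is tracking signs in the expansion and applying the subalgebra closure at exactly the right moment.
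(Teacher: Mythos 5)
Your proof is correct, and the computation is exactly the one the paper has in mind: the paper's proof is a two-line delegation, citing that ``by the same argument as in the case of associative algebras'' the projections $P_{\pm}$ are Rota-Baxter of weight $-1$ and then negating to flip the weight, whereas you write the verification out in full. Your sign bookkeeping checks out: the sum $[B(u),v]_\g+[u,B(v)]_\g+[u,v]_\g$ with $B=-P_+$ collapses to $-[u_+,v_+]_\g+[u_-,v_-]_\g$, and the subalgebra hypothesis is used precisely where you say it is, so that $-P_+$ applied to this returns $[u_+,v_+]_\g$. The one genuinely different choice is how you get the second operator: the paper treats $-P_+$ and $-P_-$ symmetrically via the weight-$(-1)$ statement, while you derive $-P_-$ from $-P_+$ through the involution $B\mapsto -\id-B$ (using $P_++P_-=\id$), which is recorded earlier in the section and is a clean way to avoid repeating the computation. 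Both routes are valid; yours has the advantage of being self-contained rather than resting on an external reference, at the cost of a few extra lines.
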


\begin{proof}
By the same argument as in the case of associative algebras~\mcite{Gub}, the projections $P_+$ and $P_-$ are Rota-Baxter operators of weight $-1$. Thus $-P_+$ and $-P_-$ have weight $1$.
\end{proof}

As an analog for Lie groups, we have

\begin{lem}\label{lem:RBPG}
Let $G$ be a Lie group and $G_{+},G_{-}$ be two Lie subgroups such that $G=G_{+} G_{-}$ and $G_+\cap G_-=\{e\}$. Define $\B:G\to G$ by
$$\B(g)=g_{-}^{-1},\quad \forall g=g_{+}g_{-},\quad \mbox{where}\quad g_{+}\in G_{+},g_-\in G_-.$$ Then $(G,\B)$ is a Rota-Baxter Lie group.
\mlabel{lem:rbpg}
\end{lem}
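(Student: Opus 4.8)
The plan is to verify the Rota-Baxter relation~\eqref{RBgroup} directly, exploiting the \emph{uniqueness} of the factorization. First I would record that the hypotheses $G=G_+G_-$ and $G_+\cap G_-=\{e\}$ make every $g\in G$ decompose uniquely as $g=g_+g_-$ with $g_+\in G_+$, $g_-\in G_-$: if $a_+a_-=b_+b_-$ then $b_+^{-1}a_+=b_-a_-^{-1}\in G_+\cap G_-=\{e\}$. This shows $\B$ is well defined, and smoothness of $\B$ follows once the multiplication map $G_+\times G_-\to G$ is a diffeomorphism onto $G$ (the standing assumption in such global decompositions). So the real content is the algebraic identity.

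Next I would fix $g_1,g_2\in G$ and write their factorizations $g_1=a_+a_-$ and $g_2=b_+b_-$, so that $\B(g_1)=a_-^{-1}$ and $\B(g_2)=b_-^{-1}$. The left-hand side of~\eqref{RBgroup} is then simply $\B(g_1)\B(g_2)=a_-^{-1}b_-^{-1}$. For the right-hand side I compute the argument $g_1\,\Ad_{\B(g_1)}g_2$. Since $\B(g_1)=a_-^{-1}$, the conjugation gives $\Ad_{\B(g_1)}g_2=a_-^{-1}g_2a_-$, and therefore
\[
g_1\,\Ad_{\B(g_1)}g_2=(a_+a_-)(a_-^{-1}g_2a_-)=a_+\,g_2\,a_-=a_+b_+\,b_-a_-.
\]
The key observation is that the trailing factor $a_-$ introduced by the adjoint action exactly cancels the $a_-$ at the tail of $g_1$, so that the result is $a_+g_2a_-$.

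Because $G_+$ and $G_-$ are subgroups, $a_+b_+\in G_+$ and $b_-a_-\in G_-$, so $a_+b_+\,b_-a_-=(a_+b_+)(b_-a_-)$ is precisely the unique $G_+$--$G_-$ factorization of this element. Applying $\B$, which inverts the $G_-$-part, yields $\B(g_1\,\Ad_{\B(g_1)}g_2)=(b_-a_-)^{-1}=a_-^{-1}b_-^{-1}$, matching the left-hand side. There is no genuine obstacle here—the verification is a short direct computation; the only point requiring care is the bookkeeping of the adjoint action, and the one genuinely structural input is that $G_\pm$ are subgroups (so that $a_+b_+$ and $b_-a_-$ stay in $G_+$ and $G_-$), which is exactly what makes the rearranged product factor correctly. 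This mirrors, at the group level, the algebra statement of Lemma~\ref{lem:RBP} that $-P_-$ is a Rota-Baxter operator of weight $1$, with group inversion playing the role of negation.
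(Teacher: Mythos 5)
Your proof is correct and is essentially the paper's own argument: both compute $g_1\,\Ad_{\B(g_1)}g_2=(g_1)_+(g_2)_+(g_2)_-(g_1)_-$ by the same cancellation of the $G_-$-factor and then read off the $G_-$-part via the (unique) factorization. The only additions are your explicit remarks on uniqueness of the decomposition and smoothness, which the paper leaves implicit.
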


\begin{proof}
Let $g=g_+g_-$ and $h=h_+h_-$ be two elements in $G$ with $g_{+},h_{+}\in G_{+}$ and $g_-,h_-\in G_-$. Then by the fact that $G_{+}$ and $G_-$ are Lie subgroups, we have
\begin{eqnarray*}
\B(g\Ad_{\B(g)} h)=\B(g_+g_- g_{-}^{-1} h_+h_-g_{-})=\B(g_+h_+h_-g_-)= (h_-g_-)^{-1}=\B(g)\B(h).
\end{eqnarray*}
 Hence $\B$ satisfies \eqref{RBgroup}.
\end{proof}

\begin{rmk}
It is also natural to consider the inverse of the projection to the first factor. However, unlike the case of Lie algebras, the inverse of the projection to the first factor is not a Rota-Baxter operator of weight $1$. As we will see in Remark~\ref{rmk:relation}, the projection to the first factor is a Rota-Baxter operator of weight $-1$.
\end{rmk}

Lemma \ref{lem:RBPG}  provides a large number of Rota-Baxter Lie groups.

\begin{ex}
\begin{enumerate}
\item
By the Gram-Schmidt decomposition of matrices, we have the global decomposition of  $\mathrm{SL}(n,\mathbbm{C})$, the space of complex matrices with determinant $1$:
\[\mathrm{SL}(n,\mathbbm{C})=\mathrm{SU}(n)\mathrm{SB}(n,\mathbbm{C}),\]
 where $\mathrm{SU}(n)$ is the space of unitary matrices with determinant $1$ and $\mathrm{SB}(n,\mathbbm{C})$ consists of all upper triangular matrices in $\mathrm{SL}(n,\mathbbm{C})$ with positive entries on the diagonal.

 Then $(\mathrm{SL}(n,\mathbbm{C}), \B)$ is a Rota-Baxter Lie group, where $\B(ub)=b^{-1}$, for $u\in \mathrm{SU}(n)$ and $b\in \mathrm{SB}(n,\mathbbm{C})$.

More generally, for the Iwasawa decomposition~\cite[VI.4]{K} $G = KAN$ of a semisimple group $G$ as the product of a compact subgroup, an abelian subgroup and a nilpotent subgroup, the projection $G\to G, g=kan\mapsto (an)^{-1}$ is a Rota-Baxter operator. Here note that $AN$ is a solvable Lie subgroup of $G$.
\item
The same conclusions can be obtained for some other decompositions of Lie groups, such as the Langlands decomposition~\cite[VII.7]{K} $P = MAN$ of a parabolic subgroup $P$ of a reductive Lie group as the product of a semisimple subgroup, an abelian subgroup and a nilpotent subgroup. Note that $AN$ is a Lie subgroup of $P$.
\end{enumerate}
\mlabel{ex:sl}
\end{ex}

\subsection{Rota-Baxter Lie groups as integrations of Rota-Baxter Lie algebras}

Generalizing the fact that Lie groups are integrations of Lie algebras,  we now show that Rota-Baxter Lie groups serve as integrations of Rota-Baxter Lie algebras of weight 1.

Let $G$ be a Lie group and $e$ its identity. Let $\g=T_eG$ be the Lie algebra of  $G$ and let
$$\exp^{(\cdot)}:\mathfrak{g}\longrightarrow G$$
be the exponential map. Then the relation between the Lie bracket $[\cdot,\cdot]_\g$ and the Lie group multiplication is given by the following important formula:
\begin{equation}\mlabel{eq:expo}
  [u,v]_\g=\frac{d^2}{dt ds}\eval{t,s=0}\exp^{tu}\exp^{sv}\exp^{-tu},\quad \forall~ u,v\in\g.
\end{equation}

Now we are ready to give the main result in this section, which states that the differentiation of a Rota-Baxter Lie group is a Rota-Baxter Lie algebra of weight 1.
\begin{thm}
  Let $(G,\B)$ be a Rota-Baxter Lie group. Let $\mathfrak{g}=T_e G$ be the Lie algebra of $G$ and
\begin{equation}\mlabel{eq:tangent}
B=\B_{*e}:\mathfrak{g}\longrightarrow \mathfrak{g}
\end{equation}
  the tangent map of $\B$ at the identity $e$. Then  $(\mathfrak{g},B)$ is a Rota-Baxter Lie algebra of weight $1$.
\mlabel{main1}
\end{thm}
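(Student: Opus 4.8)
The plan is to substitute one-parameter subgroups into the group-level Rota--Baxter relation \eqref{RBgroup} and read off the algebra identity \eqref{eq:rbo} by means of the bracket formula \eqref{eq:expo}. First, setting $g_1=g_2=e$ in \eqref{RBgroup} gives $\B(e)^2=\B(e)$, so $\B(e)=e$; hence $B=\B_{*e}$ really is an endomorphism of $\g=T_eG$. I would then put $g_1=\exp^{tu}$ and $g_2=\exp^{sv}$ into \eqref{RBgroup}. Since $\Ad_h\exp^{sv}=\exp^{s\,\Ad_h v}$, the relation becomes
\[ \B(\exp^{tu})\,\B(\exp^{sv})\;=\;\B\big(\exp^{tu}\exp^{s\,\Ad_{\B(\exp^{tu})}v}\big), \]
an identity of smooth maps of $(t,s)$ passing through $e$ at the origin, to which I apply $\tfrac{\partial^2}{\partial t\,\partial s}\big|_{t=s=0}$ on both sides.

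To evaluate the two sides I work in exponential coordinates, writing $\B(\exp^{X})=\exp^{BX+\frac12 S(X,X)+O(X^3)}$, where $S$ is the symmetric bilinear Hessian of $\B$ at $e$. On the left, $\B(\exp^{tu})$ depends only on $t$ and $\B(\exp^{sv})$ only on $s$, so by the Baker--Campbell--Hausdorff expansion the only mixed $ts$-contribution comes from the bracket term, giving $\tfrac12[Bu,Bv]_\g$. On the right, set $V(t):=\Ad_{\B(\exp^{tu})}v=v+t[Bu,v]_\g+O(t^2)$, so the argument of $\B$ is $\exp^{tu}\exp^{sV(t)}$, whose logarithm has $ts$-coefficient $[Bu,v]_\g+\tfrac12[u,v]_\g$ and first-order coefficients $u$ and $v$. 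Applying $\B$ and extracting the $ts$-term (the linear part of $\B$ contributing $B$ applied to the $ts$-coefficient, and the Hessian contributing $S(u,v)$ through its two first-order arguments) yields
\[ \tfrac12[Bu,Bv]_\g\;=\;B\big([Bu,v]_\g+\tfrac12[u,v]_\g\big)+S(u,v), \]
valid for all $u,v\in\g$. The only work here is routine BCH bookkeeping, where I keep solely the genuine $ts$-cross terms and discard pure $t^2$, $s^2$, and higher contributions.

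The main obstacle is precisely the stray term $S(u,v)$: because the mixed second derivative of a manifold-valued map is chart-dependent, the second-order part of $\B$ intrudes, and it has no counterpart in \eqref{eq:rbo}. The key observation is that $S$ is \emph{symmetric} in $u,v$, whereas every remaining term is controlled by the antisymmetric bracket. I therefore antisymmetrize, subtracting from the displayed identity its image under $u\leftrightarrow v$. The two copies of $S$ cancel, the two left-hand sides add to $[Bu,Bv]_\g$, and using $[Bv,u]_\g=-[u,Bv]_\g$ and $[v,u]_\g=-[u,v]_\g$ the right-hand side collapses to $B[Bu,v]_\g+B[u,Bv]_\g+B[u,v]_\g$. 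This is exactly \eqref{eq:rbo} with $\lambda=1$, so $(\g,B)$ is a Rota--Baxter Lie algebra of weight $1$.
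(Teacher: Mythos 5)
Your proof is correct, and while it starts from the same basic move as the paper --- substitute the one-parameter subgroups $\exp^{tu},\exp^{sv}$ into \eqref{RBgroup} and extract the mixed $\partial_t\partial_s$ term at the origin --- the execution differs in how the second-order ambiguity is handled, and the difference is worth noting. The paper never differentiates the two-factor relation directly; instead it starts from $[B(u),B(v)]_\g=\frac{d^2}{dt\,ds}\big|_{t,s=0}\exp^{tB(u)}\exp^{sB(v)}\exp^{-tB(u)}$ as in \eqref{eq:expo}, rewrites each factor as $\B(\exp^{\pm tu})$, $\B(\exp^{sv})$, and applies \eqref{RBgroup} \emph{twice} to collapse the triple product into a single $\B(F(t,s))$. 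The conjugation structure is doing real work there: the curve $t\mapsto F(t,0)$ has vanishing first derivative at $t=0$, so the chart-dependent Hessian of $\B$ never enters and one may pass $\B_{*e}$ through the mixed derivative directly. You instead apply \eqref{RBgroup} once to the bare product, which forces the symmetric Hessian term $S(u,v)$ into your identity, and you then kill it by antisymmetrizing in $u\leftrightarrow v$. Both routes are valid. Yours is more explicit about the chart-dependence of second derivatives of a manifold-valued map --- a subtlety the paper's Leibniz-rule step quietly absorbs --- at the price of some BCH bookkeeping and of obtaining $[B(u),B(v)]_\g$ only after polarization; the paper's choice of the commutator expression produces the bracket on the left-hand side intrinsically and avoids the Hessian by design. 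One small point to keep in mind if you write this up: when you ``equate $ts$-coefficients'' you are comparing Taylor coefficients of the two sides in a fixed chart (exponential coordinates), which is legitimate since the two sides agree as group elements for all small $t,s$; the final antisymmetrized identity involves only $B$ and $[\cdot,\cdot]_\g$ and is therefore chart-independent, as it must be.
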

\begin{proof}
Observe from \eqref{RBgroup} that $\B(e)=e$. Since $B=\B_{*e}$ is the tangent map of $\B$ at $e$, we have the following relation for  sufficiently small $t$:
\begin{equation}\mlabel{eq:comm}
\frac{d}{dt}\eval{t=0}\B(\exp^{tu})=\frac{d}{dt}\eval{t=0}\exp^{tB(u)}=B(u), \quad \forall~ u\in\g.
\end{equation}
Now we check the identity
\[[B(u),B(v)]_\g=B([B(u),v]_\g +[u,B(v)]_\g+[u,v]_\g).\]
By \eqref{eq:f1}, \eqref{eq:f2}, \eqref{eq:expo}-\eqref{eq:comm}, and using the Leibniz rule, we obtain
\begin{eqnarray*}
[B(u),B(v)]_\g&=&\frac{d^2}{dt ds}\eval{t,s=0}\exp^{tB(u)}\exp^{sB(v)} \exp^{-tB(u)} \quad (Eq.~\meqref{eq:expo})
\\ &=&\frac{d^2}{dt ds}\eval{t,s=0} \B(\exp^{tu})\B(\exp^{sv})\B(\exp^{-tu})
\quad (B=\B_{*e})
\\ &=&\frac{d^2}{dt ds}\eval{t,s=0}\B(\exp^{tu})\B(\exp^{sv}\Ad_{\B(\exp^{sv})} \exp^{-tu})\quad (Eq.~\meqref{RBgroup})\\
&=&\frac{d^2}{dt ds}\eval{t,s=0}\B(\exp^{tu}(\Ad_{\B(\exp^{tu})} \exp^{sv})( \Ad_{\B(\exp^{tu})\B(\exp^{sv})}\exp^{-tu}))
\quad (Eq.~\meqref{RBgroup}) \\
&=&
\B_{*e}\left(\frac{d^2}{dt ds}\eval{t,s=0}\Ad_{\B(\exp^{tu})} \exp^{sv}+\frac{d^2}{dt ds}\eval{t,s=0}\Ad_{\B(\exp^{sv})}\exp^{-tu}\right.\\ &&\qquad +\left.\frac{d^2}{dt ds}\eval{t,s=0}\exp^{tu}\exp^{sv}\exp^{-tu}\right)
\\ &=&B([B(u),v]_\g-[B(v),u]_\g+[u,v]_\g).
\end{eqnarray*}
  Therefore,  $(\mathfrak{g},B)$ is a Rota-Baxter Lie algebra of weight $1$.
\end{proof}

\begin{ex}
The tangent map of the inverse map $(\cdot)^{-1}:G\longrightarrow G$ is $-\rm{id}:\g\longrightarrow \g$, which is naturally a Rota-Baxter operator of weight $1$ on the Lie algebra $\g$. Moreover, the tangent map of the Rota-Baxter operator $\B:G\to G, \B(g_+g_-)=g_-^{-1}$  in Lemma \ref{lem:rbpg}  is $-P_-:\g\to \g, -P_-(u_++u_-)=-u_-$ in Lemma \ref{lem:RBP}, which is a Rota-Baxter operator of weight $1$ on $\g$.
\end{ex}

\begin{rmk}\label{rmk:relation}
 In the Lie algebra case, it is straightforward to see that   $(\g,B)$ is a Rota-Baxter Lie algebra of weight 1 if and only if $(\g,-B)$ is a Rota-Baxter Lie algebra of weight $-1$. Now at the Lie group level, we can define
a {\bf Rota-Baxter Lie group of weight $-1$} to be a Lie group $G$ with a map $\frkC:G\longrightarrow G$ such that
\begin{eqnarray}\mlabel{RBgroup-1}
\frkC(g_1)\frkC(g_2)=\frkC((\Ad_{\frkC(g_1)} g_2)g_1),\qquad \forall~ g_1,g_2\in G.
\end{eqnarray}
 Let $\frkB$ be a Rota-Baxter operator on a Lie group $G$. Define $\frkC:G\to G$ by
	$$ \frkC(g):=\frkB(g^{-1}).$$
		In Eq.~\meqref{RBgroup}, replacing $g_1$ and $g_2$ by $g_1^{-1}$ and $g_2^{-1}$, we obtain
$$\frkB(g_1^{-1})\frkB(g_2^{-1}) =\frkB(g_1^{-1}\Ad_{\frkB(g_1^{-1})}g_2^{-1})
=\frkB\Big((\big(\Ad_{\frkB(g_1^{-1})}g_2\big) g_1)^{-1}\Big).$$
This gives
$$\frkC(g_1)\frkC(g_2)=\frkC\Big(\big(\Ad_{\frkC(g_1)}g_2\big)g_1\Big).
$$
Therefore, $\frkC$ is a Rota-Baxter operator of weight $-1$.

Under the assumption given in Lemma \ref{lem:RBPG}, let $\frkC$ be the projection to the first direct factor, i.e.
$$
\frkC(g_+g_-):=g_+.
$$
Then it is straightforward to deduce that $\frkC$ is a Rota-Baxter operator of weight $-1$.

Similar to the proof of Theorem \mref{main1}, one can show that the differentiation of a Rota-Baxter Lie group of weight $-1$ is a Rota-Baxter Lie algebra of weight $-1$. Thus the above discussion can be summarized into the following diagram:
  \begin{equation}
\begin{split}
\xymatrix{
\text{RB Lie alg. $(\g,B)$ of weight 1} \ar^{\text{additive inverse}}[rrr] \ar_{\text{integration}}[d] &&& \text{RB Lie alg. $(\g,-B)$ of weight $-1$} \ar^{\text{integration}}[d]\\
\text{RB Lie group $(G,\B)$ of weight 1} \ar^{\text{multiplicative inverse}}[rrr] &&& \text{RB Lie group $(G,\frkC)$ of weight $-1$.}
}
\end{split}
\mlabel{eq:wei}
\end{equation}
{\it For simplicity, we will only consider Rota-Baxter Lie groups of weight $1$ in the rest of the paper.}
\end{rmk}

Now we give further characterization of Rota-Baxter operators on Lie groups.

Let $G$ be a Lie group. It acts on itself by the adjoint action $\Ad: G\longrightarrow \Aut(G)$. Thus we obtain a new Lie group structure on $G\times G$  given by
\[(g_1,h_1)\cdot (g_2,h_2):=(g_1g_2,h_1\Ad_{g_1} h_2).\]
We denote this Lie group  by $G\triangleright G$.

Let  $\B:G\longrightarrow G$ be a smooth map and denote its graph in $G\times G$ by $\huaG_\B$, that is,
$$
\huaG_\B:=\{(\B(g),g)\,| \,g\in G\}.
$$

\begin{pro}
Let $G$ be a Lie group. Then a smooth map $\B:G\longrightarrow G$ is a Rota-Baxter operator if and only if the graph  $\huaG_\B$ is a Lie subgroup of $G\triangleright G$.
\end{pro}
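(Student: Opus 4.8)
The plan is to prove both implications by directly comparing the two group-theoretic conditions, taking advantage of the fact that $\huaG_\B$ is manifestly a closed embedded submanifold of $G\times G$ (being the graph of a smooth map), so that the only real content is whether $\huaG_\B$ is closed under the multiplication of $G\triangleright G$ and under inversion. First I would recall that in $G\triangleright G$ a typical pair of elements of the graph is $(\B(g_1),g_1)$ and $(\B(g_2),g_2)$, and I would compute their product using the twisted multiplication:
\[
(\B(g_1),g_1)\cdot(\B(g_2),g_2)=\bigl(\B(g_1)\B(g_2),\,g_1\Ad_{g_1}g_2\bigr).
\]
Here a subtle point, and likely the main obstacle, is to check that the second coordinate $g_1\Ad_{g_1}g_2$ is \emph{not} what one wants; rather the definition of $G\triangleright G$ contracts via $\Ad_{g_1}$, whereas the Rota-Baxter relation \eqref{RBgroup} contracts via $\Ad_{\B(g_1)}$. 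So I must be careful to read the group law of $G\triangleright G$ correctly and match it against \eqref{RBgroup}; the correct product to examine is in fact dictated by the first factor $G$ acting, and I expect the intended structure is $(g_1,h_1)\cdot(g_2,h_2)=(g_1g_2,h_1\Ad_{g_1}h_2)$ with the roles arranged so that the graph condition reproduces exactly $\B(g_1)\B(g_2)=\B(g_1\Ad_{\B(g_1)}g_2)$.

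For the forward direction, assuming $\B$ is a Rota-Baxter operator, I would show $\huaG_\B$ is closed under multiplication by verifying that the product of two graph elements again has its first coordinate equal to $\B$ applied to its second coordinate. Concretely, the product is a graph element precisely when $\B(g_1)\B(g_2)=\B(\text{second coordinate})$, and the Rota-Baxter relation \eqref{RBgroup} furnishes exactly this identity once the second coordinate is correctly identified as $g_1\Ad_{\B(g_1)}g_2$. I would also verify closure under inversion: computing the inverse of $(\B(g),g)$ in $G\triangleright G$ and checking it lies in $\huaG_\B$; this should follow from the identity $\B(e)=e$ (observed in the proof of Theorem~\mref{main1}) together with \eqref{RBgroup}, or alternatively I can invoke that a closed subset of a Lie group containing the identity and closed under multiplication need not be a subgroup in general, so the inverse check genuinely must be carried out. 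Since $\huaG_\B$ is the graph of a smooth map it contains $(\B(e),e)=(e,e)$, giving the identity, and smoothness of $\B$ makes $\huaG_\B$ an embedded submanifold, so closure under the two operations yields a Lie subgroup.

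For the converse, I would assume $\huaG_\B$ is a Lie subgroup and read off \eqref{RBgroup} by reversing the computation: since $(\B(g_1),g_1)$ and $(\B(g_2),g_2)$ lie in $\huaG_\B$, their product also lies in $\huaG_\B$, which by definition of the graph forces the first coordinate of the product to equal $\B$ of the second coordinate, i.e.\ $\B(g_1)\B(g_2)=\B(g_1\Ad_{\B(g_1)}g_2)$, which is exactly \eqref{RBgroup}. The logical structure is therefore an ``if and only if'' packaged into a single product computation, and the routine verifications are the associativity/inverse bookkeeping inside $G\triangleright G$. I expect the only genuinely delicate step to be confirming that the twisting in the group law of $G\triangleright G$ is set up so that it aligns with the twisting $\Ad_{\B(g_1)}$ in the Rota-Baxter relation rather than $\Ad_{g_1}$; once that alignment is pinned down, both directions are immediate.
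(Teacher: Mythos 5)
Your argument is in substance the paper's own: everything reduces to the single computation $(\B(g_1),g_1)\cdot(\B(g_2),g_2)=\bigl(\B(g_1)\B(g_2),\,g_1\Ad_{\B(g_1)}g_2\bigr)$ in $G\triangleright G$, after which closure of $\huaG_\B$ under multiplication is literally the relation \eqref{RBgroup}, read in both directions. Be aware, though, that your first displayed formula is wrong as written: in the group law $(g_1,h_1)\cdot(g_2,h_2)=(g_1g_2,h_1\Ad_{g_1}h_2)$ the conjugation is by the \emph{first} coordinate of the first factor, which for the graph element $(\B(g_1),g_1)$ is $\B(g_1)$, so the second coordinate of the product is $g_1\Ad_{\B(g_1)}g_2$ from the outset --- the ``tension'' you describe between $\Ad_{g_1}$ and $\Ad_{\B(g_1)}$ is only a clash of letters, and you do land on the correct identification before using it, so the argument goes through. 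Your point that closure under inversion must also be checked is legitimate and is passed over in silence by the paper; to close it, note that the inverse of $(\B(g),g)$ in $G\triangleright G$ is $\bigl(\B(g)^{-1},\Ad_{\B(g)^{-1}}g^{-1}\bigr)$, and applying \eqref{RBgroup} to the pair $g$, $\Ad_{\B(g)^{-1}}g^{-1}$ gives $\B(g)\,\B\bigl(\Ad_{\B(g)^{-1}}g^{-1}\bigr)=\B(gg^{-1})=\B(e)=e$, i.e.\ $\B\bigl(\Ad_{\B(g)^{-1}}g^{-1}\bigr)=\B(g)^{-1}$, so the inverse does lie in $\huaG_\B$; this short computation is worth writing out rather than asserting that it ``should follow.''
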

\begin{proof}
For all $g,h\in G$, we have
\[(\B(g),g)\cdot (\B(h),h)=(\B(g) \B(h),g\Ad_{\B(g)} h),\]
which implies  that the multiplication is closed on the graph of $\B$ if and only if
\[\B(g) \B(h)=\B(g\Ad_{\B(g)} h),\]
that is, $\B$ is a Rota-Baxter operator on the Lie group $G$.
\end{proof}

For a Rota-Baxter Lie algebra $(\g,B)$ of weight $1$, the bracket
\begin{eqnarray}\label{Bbr}
[u,v]_B=[B(u),v]_\g+[u,B(v)]_\g+[u,v]_\g,\quad \forall~ u,v\in \g,
\end{eqnarray}
defines another Lie algebra structure on $\g$, which we will call the {\bf \desc} of $[\cdot,\cdot]_\g$, while the pair $((\g,[\cdot,\cdot]_\g),(\g,[\cdot,\cdot]_B)$ is called the double in~\cite{STS,STS2}. Further $B$ is also a Rota-Baxter operator on $(\g,[\cdot , \cdot]_B)$ and $B: (\g,[\cdot,\cdot]_B)\to \g$ is a homomorphism of Rota-Baxter Lie algebras. This property also holds for Rota-Baxter Lie groups.

\begin{pro}\mlabel{G*}
Let $(G,\B)$ be a Rota-Baxter Lie group.
\begin{enumerate}
\item
The pair $(G,*)$, with the multiplication
\begin{eqnarray}\label{*}
g_1* g_2:=g_1\Ad_{\B(g_1)} g_2, \qquad \forall~g_1, g_2\in G,
\end{eqnarray}
is also a Lie group, called the {\bf \desc Lie group} of the Rota-Baxter Lie group $(G,\B)$, whose Lie algebra is the \desc Lie algebra $(\g,[\cdot,\cdot]_B)$, where $B=\B_{*e}$ and $[\cdot,\cdot]_B$ is given in \meqref{Bbr}.
\mlabel{it:G*1}
\item
The operator $\B$ is a Rota-Baxter operator on the Lie group $(G,*)$.
\mlabel{it:G*2}
\item
The map $\B:(G,*)\to G$ is a homomorphism of Rota-Baxter Lie groups from $(G,*,\B)$ to $(G,\B)$.
\mlabel{it:G*3}
\end{enumerate}
\end{pro}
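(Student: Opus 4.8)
The plan is to isolate one consequence of \eqref{RBgroup} and let it drive all three parts. Taking $g_1=g_2=e$ in \eqref{RBgroup} gives $\B(e)^2=\B(e)$, hence $\B(e)=e$; and, reading \eqref{RBgroup} with the new product $*$ in mind,
\[\B(g_1 * g_2)=\B\big(g_1\Ad_{\B(g_1)}g_2\big)=\B(g_1)\B(g_2),\]
so $\B$ intertwines $*$ with the ordinary multiplication of $G$. This identity is the engine of the whole proof.

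For part \ref{it:G*1} I would argue through the graph. Consider $\iota:=(\B,\id):G\to G\triangleright G$, $g\mapsto(\B(g),g)$. Then $\iota(g)\iota(h)=\big(\B(g)\B(h),\,g\Ad_{\B(g)}h\big)=\big(\B(g*h),\,g*h\big)=\iota(g*h)$, using the displayed identity, so $\iota$ is a bijection of $G$ onto $\huaG_\B$ that turns $*$ into the multiplication of $G\triangleright G$. Since the preceding characterization shows $\huaG_\B$ is a Lie subgroup and $\iota$ is a diffeomorphism onto it with inverse $\pr_2$, transport of structure makes $(G,*)$ a Lie group with $\iota$ a Lie group isomorphism; concretely the unit is $e$ and the inverse is $g^{-1_*}=\Ad_{\B(g)^{-1}}g^{-1}$, and associativity can also be verified by hand using \eqref{eq:f1}, \eqref{eq:f2} and the displayed identity. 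To read off the Lie algebra I differentiate $\iota$ at $e$: its image is the graph $\{(B(v),v):v\in\g\}$ inside the Lie algebra $\g\ltimes_{\ad}\g$ of $G\triangleright G$, whose bracket is the standard semidirect-product one, $[(x_1,y_1),(x_2,y_2)]=\big([x_1,x_2]_\g,\,[x_1,y_2]_\g-[x_2,y_1]_\g+[y_1,y_2]_\g\big)$. Evaluating on $(B(u),u)$ and $(B(v),v)$ and reading off the second coordinate (i.e. applying the differential of $\pr_2$ at $e$) returns exactly $[B(u),v]_\g+[u,B(v)]_\g+[u,v]_\g=[u,v]_B$, so the Lie algebra of $(G,*)$ is $(\g,[\cdot,\cdot]_B)$.

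Part \ref{it:G*3} is then immediate: the displayed identity says $\B:(G,*)\to G$ is a Lie group homomorphism, and the Rota-Baxter compatibility $\B\circ\B=\B\circ\B$ is a tautology because $\B$ is the operator on both source and target. For part \ref{it:G*2} I must check the Rota-Baxter relation for $(G,*,\B)$, namely $\B(g_1)*\B(g_2)=\B\big(g_1*\Ad^{*}_{\B(g_1)}g_2\big)$, where $\Ad^{*}$ is the adjoint action of $(G,*)$. The right-hand side is handled by feeding everything through the homomorphism $\B:(G,*)\to G$ of part \ref{it:G*3}: applying $\B$ to $\Ad^{*}_{\B(g_1)}g_2=\B(g_1)*g_2*\B(g_1)^{-1_*}$ and using that a homomorphism preserves inverses collapses it to $\B(\B(g_1))\,\B(g_2)\,\B(\B(g_1))^{-1}=\Ad_{\B(\B(g_1))}\B(g_2)$, whence the right-hand side equals $\B(g_1)\,\Ad_{\B(\B(g_1))}\B(g_2)$. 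The left-hand side is, by the definition of $*$, also $\B(g_1)\,\Ad_{\B(\B(g_1))}\B(g_2)$, so the two agree.

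The computations are short; the care points are structural. First, I would resist differentiating a $*$-commutator directly to obtain the bracket, because the $*$-inverse $\Ad_{\B(g)^{-1}}g^{-1}$ makes that route unpleasant; the graph isomorphism $(G,*)\cong\huaG_\B$ circumvents it and simultaneously supplies the group and smooth structures. Second, in part \ref{it:G*2} the relation is taken with respect to the new adjoint action $\Ad^{*}$ and the iterated operator $\B\circ\B$ unavoidably appears; the disciplined way to keep both under control is precisely to route them through the homomorphism property established in part \ref{it:G*3}, which is why I prove \ref{it:G*3} before \ref{it:G*2}.
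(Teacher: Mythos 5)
Your proof is correct, but it takes a genuinely different route from the paper on the main point, part (i). The paper verifies by hand that $e$ is a $*$-unit, that $*$ is associative (via the Rota-Baxter relation), and that $\Ad_{\B(g)^{-1}}g^{-1}$ is the $*$-inverse, and then computes the Lie bracket of $(G,*)$ by differentiating $\exp^{tu}*\exp^{sv}*\exp^{-tu}$ with the formula \eqref{eq:expo}, expanding via \eqref{RBgroup} and the Leibniz rule. You instead transport the structure from the graph: since the preceding proposition identifies $\huaG_\B$ as a Lie subgroup of $G\triangleright G$ and $\iota=(\B,\id)$ is a diffeomorphism onto it intertwining $*$ with the subgroup multiplication, the group axioms and smoothness come for free, and the bracket $[u,v]_B$ drops out of the semidirect-product bracket on $\g\ltimes_{\ad}\g$ restricted to the graph $\{(B(v),v)\}$ and pushed down by $\pr_{2*}$. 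This buys a cleaner derivation of the bracket (no second-order differentiation of a triple product) at the cost of invoking the graph characterization and the Lie algebra of $G\triangleright G$; the paper's computation is self-contained and parallels its proof of Theorem \ref{main1}. For parts (ii) and (iii) your argument is essentially the paper's: the paper also isolates $\B(g*h)=\B(g)\B(h)$, deduces $\B^2(g)^{-1}=\B(\B(g)^{\dag})$ (your ``homomorphisms preserve inverses''), and checks that both sides of the $*$-Rota-Baxter relation equal $\B(g)\Ad_{\B^2(g)}\B(h)$; your decision to prove (iii) before (ii) so that the $*$-adjoint action can be pushed through the homomorphism is a sensible reorganization but not a different idea.
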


\begin{proof}
\meqref{it:G*1}
Let $e$ be the identity of the Lie group $G$. It is direct to see that $\B(e)=e$. Also
$g* e=e* g=g$ and hence $e$ is also the identity of the multiplication $*$. Further,
\begin{eqnarray*}
(g_1* g_2)* g_3=(g_1\Ad_{\B(g_1)} g_2)* g_3=
g_1\Ad_{\B(g_1)} g_2\Ad_{\B(g_1\Ad_{\B(g_1)} g_2)} g_3
\end{eqnarray*}
and
\begin{eqnarray*}
g_1* (g_2* g_3)=g_1\Ad_{\B(g_1)}(g_2\Ad_{\B(g_2)} g_3)=
g_1 \Ad_{\B(g_1)} g_2\Ad_{\B(g_1)\B(g_2)}g_3.
\end{eqnarray*}
Hence the associativity holds since $\B$ satisfies \eqref{RBgroup}.

We can also see that the inverse of $g$ for the multiplication $*$, denoted by $g^{\dag}$, is $\Ad_{\B(g)^{-1}} g^{-1}$.

By \eqref{eq:expo}, we have
\begin{eqnarray*}
[u,v]_B&=&\frac{d^2}{dt ds}\eval{t,s=0}\exp^{tu}*\exp^{sv}*\exp^{-tu}
\\ &=&\frac{d^2}{dt ds}\eval{t,s=0}\exp^{tu}*(\exp^{sv}\Ad_{\B(\exp^{sv})} \exp^{-tu})\\
&=&\frac{d^2}{dt ds}\eval{t,s=0}\exp^{tu}(\Ad_{\B(\exp^{tu})} \exp^{sv})( \Ad_{\B(\exp^{tu})\B(\exp^{sv})}\exp^{-tu})
\\ &=&\frac{d^2}{dt ds}\eval{t,s=0}\Ad_{\B(\exp^{tu})} \exp^{sv}+\frac{d^2}{dt ds}\eval{t,s=0}\Ad_{\B(\exp^{sv})}\exp^{-tu}+\frac{d^2}{dt ds}\eval{t,s=0}\Ad_{\exp^{tu}} \exp^{sv}\\ &=&[B(u),v]_\g-[B(v),u]_\g+[u,v]_\g,\end{eqnarray*}
which gives the desired identity.

\smallskip

\noindent
\meqref{it:G*2}
We shall check
\begin{eqnarray}\label{RB*}
\B(g)*\B(h)=\B(g*\B(g)*h*\B(g)^{\dag}),
\end{eqnarray}
where $\B(g)^{\dag}$ is the inverse of $\B(g)$ with respect to the multiplication $*$.
By the fact that
\[\B(g*h)=\B(g)\B(h),\] we have
\[e=\B(\B(g)*\B(g)^{\dag})=\B^2(g)\B(\B(g)^{\dag}),\]
which implies that
$\B^2(g) ^{-1}=\B(\B(g)^{\dag})$. Based on this
and the definition of $*$,  we have
\begin{eqnarray*}
\B(g)*\B(h)&=&\B(g)\B^2(g)\B(h)\B^2(g)^{-1}=\B(g*\B(g)*h*\B(g)^{\dag}).
\end{eqnarray*}
This is \eqref{RB*}.

\smallskip

\noindent
\meqref{it:G*3} It follows from the definition of Rota-Baxter Lie groups that $\B$ is a homomorphism of Lie groups. Further the compatibility of $\B$ with the Rota-Baxter operators is tautology: $\B\circ \B=\B\circ \B$.
\end{proof}

\begin{cor}
For the  Rota-Baxter operator $\B: G\to G$ given in Lemma \ref{lem:rbpg}, we have a new group structure on $G$ given by
\[g*h=(g_+g_-)*(h_+h_-)=g_+g_-g_-^{-1}h_+h_-g_-=g_+h_+h_-g_-,\qquad \forall~g,h\in G.\]
The Lie algebra of this Lie group is given by
\[[u,v]_B=[u_++u_-,v_++v_-]_B=-[u_-,v_++v_-]_\g-[u_++u_-,v_-]_\g+[u_++u_-,v_++v_-]_\g=[u_+,v_+]_\g-[u_-,v_-]_\g,\] which  corresponds to the Rota-Baxter operator of weight $1$ given in Lemma \ref{lem:RBP}.
\end{cor}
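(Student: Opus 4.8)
The plan is to verify the two displayed identities by direct computation and then identify the outcome with Lemma~\ref{lem:RBP}; no new idea beyond the definitions is required.

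First I would compute the \desc product from its definition. By Proposition~\ref{G*}\eqref{it:G*1} the multiplication is $g*h=g\,\Ad_{\B(g)}h$. Writing $g=g_+g_-$ and $h=h_+h_-$ with $g_+,h_+\in G_+$ and $g_-,h_-\in G_-$, Lemma~\ref{lem:rbpg} gives $\B(g)=g_-^{-1}$, so that $\Ad_{\B(g)}h=\Ad_{g_-^{-1}}(h_+h_-)=g_-^{-1}(h_+h_-)g_-$. Substituting and cancelling $g_-g_-^{-1}$ yields $g*h=g_+g_-\cdot g_-^{-1}h_+h_-g_-=g_+h_+h_-g_-$, which is the first claim. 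This step is pure group cancellation; the only role of the hypotheses of Lemma~\ref{lem:rbpg} is to guarantee, via $G_+\cap G_-=\{e\}$, that the decomposition $g=g_+g_-$ is unique, so that $\B$ is well defined.

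For the Lie algebra, I would again invoke Proposition~\ref{G*}\eqref{it:G*1}, which identifies the Lie algebra of $(G,*)$ with $(\g,[\cdot,\cdot]_B)$ for $B=\B_{*e}$. By the Example following Theorem~\ref{main1} this tangent map is $B=-P_-$, so $B(u)=-u_-$ and $B(v)=-v_-$ for $u=u_++u_-$ and $v=v_++v_-$. I then expand the \desc bracket~\eqref{Bbr}, namely $[u,v]_B=[B(u),v]_\g+[u,B(v)]_\g+[u,v]_\g$, fully into its bilinear pieces. The bookkeeping is the one point needing care: the terms $[u_+,v_-]_\g$ and $[u_-,v_+]_\g$ each occur once with a plus and once with a minus sign and cancel, the three copies of $[u_-,v_-]_\g$ collapse to $-[u_-,v_-]_\g$, and $[u_+,v_+]_\g$ survives, giving $[u,v]_B=[u_+,v_+]_\g-[u_-,v_-]_\g$.

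Finally, since $B=-P_-$ is exactly the operator of Lemma~\ref{lem:RBP}, this identifies the \desc structure with the Rota-Baxter operator of weight $1$ constructed there, completing the statement. There is no genuine obstacle: everything reduces to the definitions together with the already-established identity $\B_{*e}=-P_-$. As an optional consistency check, I could instead recompute $[u,v]_B$ directly from the multiplication $*$ through~\eqref{eq:expo}, mirroring the derivation in the proof of Proposition~\ref{G*}\eqref{it:G*1}, and confirm that it yields the same bracket.
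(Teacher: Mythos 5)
Your proposal is correct and follows essentially the same route as the paper, which simply substitutes $\B(g)=g_-^{-1}$ into the product $g*h=g\,\Ad_{\B(g)}h$ of Proposition~\ref{G*} and $B=\B_{*e}=-P_-$ into the bracket~\eqref{Bbr}, then cancels terms exactly as you describe. The sign bookkeeping in your bracket expansion checks out ($[u_-,v_-]_\g$ with net coefficient $-1$, the mixed terms cancelling), so nothing further is needed.
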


\subsection{Differential operators on Lie groups and crossed homomorphisms}

Recall that a differential operator (of weight $\lambda$) on a Lie algebra is a linear map $D:\g\to\g$ such that $$
D[u,v]_\g=[D(u),v]_\g+[u,D(v)]_\g+\lambda[D(u),D(v)]_\g.
$$
As in the case for associative algebras~\cite{GK}, the differential operator of weight $\lambda$ on a Lie algebra is the formal left inverse of the Rota-Baxter operator of weight $\lambda$ on a Lie algebra.
It is thus natural to look for a differential operator of weight $\pm 1$ on a Lie group that meets the following requirements:
%We will consider the cases when $\lambda =\pm 1$ at the level of Lie groups.
%
\delete{We have also seen above that the integration of a Rota-Baxter operator of $\pm 1$ on a Lie algebra is a Rota-Baxter operator of weight $\pm 1$ on a Lie group. Thus it is natural to ask what a differential operator of weight $\pm 1$ on a Lie group should be.
From the above discussion, we expect that such an operator should have meet the following requirements:}
\begin{enumerate}
	\item As in the case of Lie algebras and associative algebras, the differential operator on Lie groups should be the formal inverse of the Rota-Baxter operator on Lie groups;
	\item The tangent map of the differential operator on Lie groups should be the differential operator on Lie algebras.
\end{enumerate}

We now show that these requirements are fulfilled by the notion of a crossed homomorphism or a 1-cocycle with non-abelian coefficients~\cite{Se}.

\begin{defi}
	Let $G$ be a group and let $\Gamma$ be a group with a group action by $G$, given by $\alpha\mapsto \alpha^x, \alpha\in \Gamma, x\in G$. A map $f: G\to \Gamma$ is called a {\bf crossed homomorphism} or a {\bf 1-cocycle} if
	\begin{equation} \notag
	f(xy)=f(x)f(y)^x, \quad \forall x, y \in G.
	\end{equation}
\end{defi}

Taking $\Gamma$ to be the group $G$ itself equipped with the conjugation action of $G$, we give the following notion.

\begin{defi}
A smooth map $\frkD$ on a Lie group $G$ is called a {\bf differential operator of weight $1$} (resp. $-1$) if
\begin{equation}
\mlabel{de:diffgp}
\frkD(gh)=\frkD(g)\Ad_g\frkD(h) \quad  \big(\text{resp. } \frkD(gh)=(\Ad_g\frkD(h))\frkD(g)\big), \quad \forall g, h\in G.
\end{equation}
Then $(G,\frkD)$ is called a {\bf differential Lie group of weight $1$} (resp. $-1$).
\end{defi}

We now show that this notion fulfills the above requirements and thus its name is justified.
\begin{thm}
\begin{enumerate}
	\item \label{it:diffwt1} The formal inverse of a Rota-Baxter operator of weight $\pm 1$ on Lie groups is the differential operator of $\pm 1$.
	\item \label{it:diffwt2}
	Let $(G,\frkD)$ be a differential Lie group of weight $\pm 1$. Let $\g$ be the Lie algebra of the Lie group $G$ and let $D=\frkD_{*e}$ be the tangent map of $\frkD$. Then  $(\g,D)$ is a differential Lie algebra of the same weight.
\end{enumerate}
	\mlabel{pro:diffwt}
\end{thm}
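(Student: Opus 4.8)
The plan is to handle the two parts separately: part~\eqref{it:diffwt1} is a purely formal substitution, while part~\eqref{it:diffwt2} is a second-order differentiation that mirrors the proof of Theorem~\mref{main1}.

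For part~\eqref{it:diffwt1}, I would argue by substitution alone. Suppose $\frkD=\B^{-1}$ is the formal inverse of a Rota-Baxter operator $\B$ of weight $1$. Writing $g_1=\frkD(g)$ and $g_2=\frkD(h)$ in the Rota-Baxter relation~\meqref{RBgroup}, the left side becomes $gh$, while the right side becomes $\B\big(\frkD(g)\Ad_g\frkD(h)\big)$, since $\B(\frkD(g))=g$ and hence $\Ad_{\B(\frkD(g))}=\Ad_g$. Applying $\frkD$ to both sides yields $\frkD(gh)=\frkD(g)\Ad_g\frkD(h)$, which is exactly~\meqref{de:diffgp} in weight $1$. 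The weight $-1$ case is identical: starting from~\meqref{RBgroup-1} with $\frkC$ and its formal inverse $\frkD=\frkC^{-1}$, the same substitution produces $\frkD(gh)=(\Ad_g\frkD(h))\frkD(g)$.

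For part~\eqref{it:diffwt2}, I first observe that setting $g=h=e$ in~\meqref{de:diffgp} forces $\frkD(e)=e$, so that $D=\frkD_{*e}$ obeys $\frac{d}{dt}\eval{t=0}\frkD(\exp^{tu})=D(u)$ as in~\meqref{eq:comm}. The key step is to compute the mixed second derivative of $\frkD$ along the commutator curve $\gamma(t,s)=\exp^{tu}\exp^{sv}\exp^{-tu}$ in two ways. On one hand, since $\gamma(t,0)\equiv e$, the first $t$-partial of $\gamma$ at the origin vanishes, so the Hessian contribution of $\frkD$ drops out and the chain rule gives
\[
\frac{d^2}{dt ds}\eval{t,s=0}\frkD(\gamma(t,s))=\frkD_{*e}\Big(\frac{d^2}{dt ds}\eval{t,s=0}\gamma(t,s)\Big)=D([u,v]_\g),
\]
using~\meqref{eq:expo}. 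On the other hand, applying the defining relation~\meqref{de:diffgp} twice expands $\frkD(\gamma)$ as the triple product
\[
\frkD(\exp^{tu})\,\Ad_{\exp^{tu}}\frkD(\exp^{sv})\,\Ad_{\exp^{tu}\exp^{sv}}\frkD(\exp^{-tu}),
\]
whose three factors all pass through $e$ at the origin. Differentiating this product by the Leibniz rule, the two non-trivial diagonal terms give $\frac{d^2}{dt ds}\eval{t,s=0}\Ad_{\exp^{tu}}\frkD(\exp^{sv})=[u,D(v)]_\g$ and $\frac{d^2}{dt ds}\eval{t,s=0}\Ad_{\exp^{tu}\exp^{sv}}\frkD(\exp^{-tu})=[D(u),v]_\g$, while the cross terms combine to $[D(u),D(v)]_\g$. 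Equating the two evaluations yields $D([u,v]_\g)=[D(u),v]_\g+[u,D(v)]_\g+[D(u),D(v)]_\g$, the weight $1$ differential identity; the weight $-1$ case runs identically from the second half of~\meqref{de:diffgp} and produces the sign $-[D(u),D(v)]_\g$ on the last bracket.

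I expect the main obstacle to be the careful bookkeeping in this second-order Leibniz computation. Because the factors are group (noncommutative) quantities, the cross terms must be taken in the correct left-to-right order, so that $\Ad_{\exp^{tu}}$-part and $\Ad_{\exp^{tu}\exp^{sv}}$-part contribute $D(u)D(v)$ and $-D(v)D(u)$ respectively, combining to the commutator $[D(u),D(v)]_\g$; a sign or ordering slip would spoil the cancellation. Extracting the mixed derivative of the conjugation factor $\Ad_{\exp^{tu}\exp^{sv}}\frkD(\exp^{-tu})$ is the most delicate piece, and I would do it by expanding it as the five-fold product $\exp^{tu}\exp^{sv}\frkD(\exp^{-tu})\exp^{-sv}\exp^{-tu}$ and reading off the $ts$-coefficient, which collapses to $[D(u),v]_\g$. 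The one analytic fact I would state explicitly is the interchange $\frac{d^2}{dt ds}\eval{t,s=0}\frkD(\gamma)=\frkD_{*e}\big(\frac{d^2}{dt ds}\eval{t,s=0}\gamma\big)$, valid precisely because one first-partial of $\gamma$ vanishes at the origin; this is a routine chain-rule computation but is essential to make the two evaluations meet.
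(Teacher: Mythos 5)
Your proposal is correct and follows essentially the same route as the paper: part (i) is the same formal substitution $g_1=\frkD(g)$, $g_2=\frkD(h)$ into \eqref{RBgroup} (resp.\ \eqref{RBgroup-1}), and part (ii) is the same second-order differentiation of the commutator curve after expanding $\frkD(\exp^{tu}\exp^{sv}\exp^{-tu})$ into the triple product via \eqref{de:diffgp}, exactly mirroring the proof of Theorem~\ref{main1}. Your explicit bookkeeping of the cross terms (yielding $[D(u),D(v)]_\g$, with the sign flip in the weight $-1$ case) and of the five-fold product collapsing to $[D(u),v]_\g$ matches the paper's Leibniz-rule computation.
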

\begin{proof}
	\eqref{it:diffwt1}
Let $\frkD$ be a formal inverse of the Rota-Baxter operator $\B$ on a Lie group $G$:
$$ \B(g)\B(h)=\B(g\Ad_{\B(g)}h),\quad  \forall g, h\in G.$$
Replacing $\B(g)$ and $\B(h)$ by $x$ and $y$ respectively leads to replacing $g$ and $h$ by $\frkD(x)$ and $\frkD(y)$ respectively. Then applying $\frkD$ to both sides of the above equation gives
$$ \frkD(xy)=\frkD(x)\Ad_x \frkD(y),\quad  \forall x, y\in G.$$
Thus $\frkD$ is a differential operator of weight $1$. The same proof works for the case of weight $-1$.
		
	\smallskip
	
	\noindent
	\eqref{it:diffwt2}
	Let $\frkD: G\to G$ be a differential operator of weight $1$. Note that $\frkD(e)=e$.
	We claim that the tangent map of $\frkD$ at the identity $D=\frkD_{*e}:\g\to \g$ satisfies $$D[u,v]_\g=[D(u),v]_\g+[u,D(v)]_\g+[D(u),D(v)]_\g.$$ In fact, by $\frac{d}{dt}\eval{t=0}\frkD(\exp^{tu})=\frac{d}{dt}\eval{t=0}\exp^{tD(u)}=D(u)$, \eqref{eq:expo} and \eqref{de:diffgp}, we have
	\begin{eqnarray*}
	 D[u,v]_\g&=&\frac{d^2}{dtds}\eval{t,s=0}\frkD(\exp^{tu}\exp^{sv}\exp^{-tu})\\ &=&\frac{d^2}{dtds}\eval{t,s=0}   \frkD(\exp^{tu}\exp^{sv})\Ad_{\exp^{tu}\exp^{sv}}\frkD(\exp^{-tu})\\ &=&\frac{d^2}{dtds}\eval{t,s=0} \frkD(\exp^{tu})\Ad_{\exp^{tu}} \frkD(\exp^{sv}) \Ad_{\exp^{tu}\exp^{sv}}\frkD(\exp^{-tu})\\ &=& \frac{d^2}{dtds}\eval{t,s=0}  \Ad_{\exp^{sv}}\frkD(\exp^{-tu})+\frac{d^2}{dtds}\eval{t,s=0}  \Ad_{\exp^{tu}}\frkD(\exp^{sv})\\ &&+\frac{d^2}{dtds}\eval{t,s=0}   \frkD(\exp^{tu})\frkD(\exp^{sv})\frkD(\exp^{-tu})\\ &=& [D(u),v]_\g+[u,D(v)]_\g+[D(u),D(v)]_\g.
\end{eqnarray*}
The same proof applies to the case when the weight is $-1$.
\end{proof}

\section{Factorization theorem of  Rota-Baxter Lie groups}
\mlabel{sec:fact}
In this section, we prove a factorization theorem of Rota-Baxter Lie groups which yields the Global Factorization Theorem of Semenov-Tian-Shansky for Lie groups given in~\cite{STS,STS2}. The notion of Rota-Baxter operators on Lie groups allows us to adapt the approach in~\cite{STS,STS2} from Lie algebras to Lie groups. See~\cite{FRS} for a similar factorization on Poisson Lie groups.

Let $(G, \B)$ be a Rota-Baxter Lie group. Denote by $G_\B$ the Lie group $G$ with the new group structure  $g_1*g_2:=g_1\Ad_{\B(g_1)} g_2$.

\begin{pro}\mlabel{B-}
Let $(G,\B)$  be a Rota-Baxter Lie group. Define \[\B_+:G\to G,\qquad  \B_+(g)=g\B(g).\] Then   $\B_{+}$ is a Lie group homomorphism from $G_{\B}$ to $G$.
\end{pro}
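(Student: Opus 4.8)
The plan is to verify directly that $\B_+$ intertwines the two group structures, that is, that
\[
\B_+(g_1* g_2)=\B_+(g_1)\B_+(g_2),\qquad \forall~ g_1,g_2\in G,
\]
where $*$ is the \desc multiplication on $G_\B$ and the target carries the ordinary multiplication. Smoothness of $\B_+$ is automatic, since $\B_+$ is built from the smooth group multiplication and the smooth map $\B$, so the only content is this homomorphism identity.

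First I would expand the left-hand side using $g_1* g_2=g_1\Ad_{\B(g_1)} g_2$, giving
\[
\B_+(g_1* g_2)=\big(g_1\Ad_{\B(g_1)} g_2\big)\,\B\big(g_1\Ad_{\B(g_1)} g_2\big).
\]
The crucial observation is that the argument of the final factor is exactly the right-hand side of the Rota-Baxter relation~\eqref{RBgroup}, so that $\B\big(g_1\Ad_{\B(g_1)} g_2\big)=\B(g_1)\B(g_2)$. Conceptually, this is just the statement that $\B$ is an ordinary homomorphism once its source is equipped with the \desc structure, which is precisely Proposition~\ref{G*}; here it is applied to turn the awkward nonlinear term into a clean product.

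Substituting this in and writing $\Ad_{\B(g_1)} g_2=\B(g_1)g_2\B(g_1)^{-1}$, the right-hand side becomes
\[
g_1\,\B(g_1)\,g_2\,\B(g_1)^{-1}\B(g_1)\,\B(g_2)=g_1\B(g_1)\,g_2\B(g_2)=\B_+(g_1)\B_+(g_2),
\]
after cancelling the adjacent pair $\B(g_1)^{-1}\B(g_1)$. This completes the verification. I do not expect any genuine obstacle: the proof is a one-line cancellation once~\eqref{RBgroup} is invoked, and the only care needed is in the bookkeeping of the $\Ad$ expansion and in recognizing that the cancellation occurs between the two innermost copies of $\B(g_1)$.
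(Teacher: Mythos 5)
Your proof is correct and follows exactly the paper's own argument: expand $\B_+(g_1* g_2)$, apply the Rota-Baxter relation~\eqref{RBgroup} to rewrite $\B(g_1\Ad_{\B(g_1)}g_2)$ as $\B(g_1)\B(g_2)$, and cancel the adjacent $\B(g_1)^{-1}\B(g_1)$. Nothing further is needed.
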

\begin{proof}
We have
\begin{eqnarray*}
\B_+(g_1\Ad_{\B(g_1)}g_2)&=&g_1(\Ad_{\B(g_1)}g_2)\B(g_1\Ad_{\B(g_1)}g_2)\\ &=&g_1\B(g_1)g_2\B(g_1)^{-1}\B(g_1)\B(g_2)\\
&=&g_1\B(g_1)g_2\B(g_2)\\ &=&\B_+(g_1)\B_+(g_2).
\end{eqnarray*}
So $\B_{+}$ is  a Lie group homomorphism.
\end{proof}
Let $\B$ be a Rota-Baxter operator on $G$. Define four subsets of $G$ as follows:
\[G_{+}:=\im \B_+,\qquad G_{-}:=\im \B, \qquad K_{+}:=\ker \B,\qquad  K_{-}:=\ker \B_{+}.\]
Since both $\B$ and $\B_+$ are Lie group homomorphisms,  $G_+$ and $G_-$ are Lie subgroups of $G$, $K_+$ and $K_-$ are normal Lie subgroups of $G_\B$, and $G_{\pm}\cong G_{\B}/K_{\mp}$. Moreover, we have the following relations.

\begin{lem} \mlabel{lem:normal}
$K_+\subset G_+$ and $K_-\subset G_-$ are normal Lie subgroups.
\end{lem}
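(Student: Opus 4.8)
The plan is to exploit that $\B_+\colon G_\B\to G$ and $\B\colon G_\B\to G$ are surjective homomorphisms onto $G_+$ and $G_-$ respectively (Proposition~\ref{B-} and the discussion preceding the lemma), together with the elementary fact that the image of a normal subgroup under a surjective group homomorphism is again normal.

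First, for $K_+$ I would observe that on $K_+=\ker\B$ the homomorphism $\B_+$ restricts to the identity, since $\B_+(g)=g\B(g)=g$ whenever $\B(g)=e$. Hence $\B_+(K_+)=K_+$, which already shows $K_+=\B_+(K_+)\subseteq\im\B_+=G_+$. Because $K_+$ is normal in $G_\B$ and $\B_+\colon G_\B\to G_+$ is a surjective homomorphism, its image $\B_+(K_+)=K_+$ is normal in $G_+$.

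For $K_-$ the same strategy applies with the roles of $\B$ and $\B_+$ interchanged, except that the restriction of $\B$ to $K_-$ is inversion rather than the identity, and this is the step requiring a little care. On $K_-=\ker\B_+$ one has $g\B(g)=e$, i.e. $\B(g)=g^{-1}$. A short computation with the product $*$ then gives, for $g,h\in K_-$,
\[
g* h=g\,\Ad_{\B(g)}h=g\,\B(g)h\B(g)^{-1}=g g^{-1} h g=h g,
\]
so that $*$ restricts on $K_-$ to the opposite of the original product. Since $(K_-,*)$ is a group, so is $(K_-,\cdot)$; in particular $K_-$ is closed under the original inversion, whence $\B(K_-)=\{g^{-1}\mid g\in K_-\}=K_-$. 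This yields both $K_-=\B(K_-)\subseteq\im\B=G_-$ and, since $K_-$ is normal in $G_\B$ and $\B\colon G_\B\to G_-$ is surjective, that $\B(K_-)=K_-$ is normal in $G_-$.

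Finally, smoothness is not an obstacle: $K_+,K_-,G_+,G_-$ are already known to be Lie subgroups of $G$ from the discussion preceding the lemma, so the normal-subgroup inclusions produced above are automatically inclusions of normal Lie subgroups. The only genuinely delicate point is the identification $\B(K_-)=K_-$, which hinges on recognizing that $\B$ acts as inversion on $K_-$ and that $K_-$ is therefore stable under the original inverse; the $K_+$ case is more transparent because $\B_+$ is literally the identity there.
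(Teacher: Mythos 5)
Your proof is correct. Both you and the paper exploit the same structural facts: $\B,\B_+\colon G_\B\to G$ are homomorphisms with images $G_-$, $G_+$ and with $K_+=\ker\B$, $K_-=\ker\B_+$ normal in $G_\B$. The difference is in how normality inside $G_\pm$ is extracted. The paper argues by explicit computation: for $k\in K_-$ it establishes the identity $g*k*g^\dag=\B(g)k\B(g)^{-1}$ and then checks by hand that $\B_+$ sends this element to $e$, while the containment $K_-\subset G_-$ comes from $k=\B(k)^{-1}=\B(k^\dag)$. You instead invoke the general principle that the image of a normal subgroup under a surjective homomorphism is normal, and reduce everything to the two identifications $\B_+(K_+)=K_+$ (since $\B_+=\id$ on $\ker\B$) and $\B(K_-)=K_-$ (since $\B$ is inversion on $\ker\B_+$ and $K_-$ is inversion-stable, which you correctly justify by observing that $*$ restricts to the opposite product on $K_-$); note that your $k^\dag=k^{-1}$ on $K_-$ is exactly the paper's $k=\B(k^\dag)$ in disguise. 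Your route gets both containments and both normality claims from one standard lemma, at the price of the small extra argument that $K_-$ is closed under the original inversion; the paper's version is longer but entirely self-contained computation with the Rota--Baxter identity. Both are valid.
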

\begin{proof}
Let $k\in K_-$, that is, $\B_+(k)=k\B(k)=e$. Then we have $k=\B(k)^{-1}=\B(k^{\dag})$ by \eqref{RBgroup}, where  $k^{\dag}$ is the inverse of $k$ in $G_\B$. Thus $k\in G_-$ and $K_-\subset G_-$.

For any $k\in K_-$ and $\B(g)\in G_-$, let us check $\B(g)k \B(g)^{-1}\in K_-$.  If fact, note that $g^\dag=\Ad_{\B(g)^{-1}}g^{-1}$. Then we  have
\begin{eqnarray*}
g*k*g^\dag&=&g\Ad_{\B(g)} k\Ad_{\B(g)\B(k)} \Ad_{\B(g)^{-1}}g^{-1}\\ &=&g\B(g)k\B(g)^{-1}\B(g)\B(k)\B(g)^{-1}g^{-1}\B(g)\B(k)^{-1}\B(g)^{-1}\\ &=&\B(g)k\B(g)^{-1},
\end{eqnarray*}
where in the last equation we used the fact that $k\B(k)=e$.
Thus,
\[\B(g)k\B(g)^{-1}\B(\B(g)k\B(g)^{-1})=\B(g)k\B(g)^{-1} \B(g*k*g^{\dag})=\B(g)k\B(g)^{-1}\B(g)\B(k)\B(g)^{-1}=e,\]
that is, $\B(g)k \B(g)^{-1}$ is in $K_-$. Thus  $K_-\subset G_-$ is a normal Lie subgroup.

Likewise we can prove that $K_+$ is a normal Lie subgroup of $G_+$.
\end{proof}

Based on Lemma~\mref{lem:normal}, we define
a map
\[\Theta: G_-/K_-\to G_{+}/K_{+}, \quad \Theta(\overline{\B(g)})=\overline{\B_{+}(g)},\quad \forall~ g\in G,\]
 where $\overline{\cdot}$ denotes the equivalence class in the two quotients. To see that $\Theta$ is well-defined, let $k\in K_-$. We have
 $k=\B(k)^{-1}=\B(k^\dag)$. Note that $k^\dag=\Ad_{\B(k)^{-1}}k^{-1}=k^{-1}$.
 We have
\begin{eqnarray*}
\Theta(\overline{\B(g)k})&=&\Theta(\overline{\B(g* k^{-1})})=\overline{\B_+(g*k^{-1})}\\ &=&\overline{g*k^{-1} \B(g*k^{-1})}\\
&=&\overline{g\B(g)k^{-1}\B(g)^{-1}\B(g)\B(k^{-1})}\\ &=&\overline{g\B(g)}=\Theta(\overline{\B(g)}),
\end{eqnarray*}
which implies that $\Theta$ is well-defined.
\begin{pro}\mlabel{CB}
The map $\Theta: G_-/K_-\to G_{+}/K_{+}$ is a Lie group isomorphism, and is called the {\bf Cayley transform} of the Rota-Baxter operator $\B$.
\end{pro}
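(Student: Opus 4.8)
The plan is to prove that $\Theta$ is a group homomorphism, that it is bijective, and that it is smooth with smooth inverse. Since we have already verified that $\Theta$ is well-defined, the work splits naturally into these three pieces, and the group-theoretic structure established in Proposition~\mref{B-} and Lemma~\mref{lem:normal} does most of the heavy lifting.

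First I would establish that $\Theta$ is a homomorphism. Recall that $\B:G_\B\to G$ and $\B_+:G_\B\to G$ are both Lie group homomorphisms, where $G_\B$ carries the \desc multiplication $*$. This means that on the quotients, the assignments $\overline{g}\mapsto\overline{\B(g)}$ and $\overline{g}\mapsto\overline{\B_+(g)}$ are genuinely induced homomorphisms. Concretely, for $g,h\in G$ I would compute
\begin{eqnarray*}
\Theta(\overline{\B(g)}\,\overline{\B(h)})&=&\Theta(\overline{\B(g)\B(h)})=\Theta(\overline{\B(g* h)})=\overline{\B_+(g* h)}\\
&=&\overline{\B_+(g)\B_+(h)}=\overline{\B_+(g)}\,\overline{\B_+(h)}=\Theta(\overline{\B(g)})\,\Theta(\overline{\B(h)}),
\end{eqnarray*}
using that $\B(g)\B(h)=\B(g* h)$ and $\B_+(g* h)=\B_+(g)\B_+(h)$ from Proposition~\mref{B-}. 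The only subtlety is that the product in $G_-/K_-$ is the ordinary product in $G$ restricted to $G_-=\im\B$, and one must know this product stays inside $\im\B$; but that is exactly the Rota-Baxter relation $\B(g)\B(h)=\B(g* h)$ again, so $G_-$ is closed under multiplication, consistent with its being a subgroup.

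Next I would prove bijectivity. The cleanest route is to exhibit $\Theta$ as the isomorphism coming from the first isomorphism theorem applied twice. Both $\B$ and $\B_+$ factor through $G_\B$, giving $G_-\cong G_\B/K_+$ and $G_+\cong G_\B/K_-$. Passing to the further quotients by $K_-$ and $K_+$ respectively, one finds that both $G_-/K_-$ and $G_+/K_+$ are isomorphic to $G_\B/(K_+\cdot K_-)$ (or to $G_\B$ modulo the subgroup generated by $K_+$ and $K_-$), and $\Theta$ is precisely the composite identifying these two descriptions. I would make this explicit by constructing the inverse map $\Theta^{-1}(\overline{\B_+(g)})=\overline{\B(g)}$, checking it is well-defined by the symmetric argument to the one already given for $\Theta$ (swapping the roles of $K_+$ and $K_-$), and observing $\Theta\circ\Theta^{-1}$ and $\Theta^{-1}\circ\Theta$ are identities directly from the formulas. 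This simultaneously gives injectivity and surjectivity.

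Finally, smoothness: since $\B$ and $\B_+$ are smooth and the quotient projections $G_-\to G_-/K_-$ and $G_+\to G_+/K_+$ are submersions (the $K_\pm$ being normal Lie subgroups, the quotients are Lie groups and the projections smooth surjective submersions), the induced map $\Theta$ on quotients is automatically smooth, as is $\Theta^{-1}$ by the same reasoning. The main obstacle I anticipate is not any single computation but the bookkeeping of which multiplication lives where: $G_\B$ uses $*$, while $G_-$ and $G_+$ inherit the ordinary product of $G$, and the homomorphism property of $\B$ and $\B_+$ is precisely what translates between the two. Keeping that translation straight—and confirming at each step that representatives may be chosen compatibly so that well-definedness is preserved under the further quotient—is where care is needed; the rest follows formally from the first isomorphism theorem for Lie groups.
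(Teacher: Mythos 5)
Your homomorphism step is exactly the paper's: both arguments reduce to $\B(g)\B(h)=\B(g*h)$ and $\B_+(g*h)=\B_+(g)\B_+(h)$ from Proposition~\mref{B-}. Where you diverge is bijectivity. The paper shows surjectivity is immediate from $G_+=\im\B_+$ and proves injectivity by a direct kernel computation: if $\B_+(g)\in K_+$, i.e.\ $\B(g\B(g))=e$, then
\[
\B_+(\B(g))=\B(g)\B(\B(g))=\B(g*\B(g))=\B(g\Ad_{\B(g)}\B(g))=\B(g\B(g))=e,
\]
so $\B(g)\in K_-$. You instead build the inverse $\Theta^{-1}(\overline{\B_+(g)})=\overline{\B(g)}$ and appeal to the first isomorphism theorem, identifying both quotients with $G_\B/(K_+ * K_-)$. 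That picture is correct and arguably more conceptual, and your smoothness remark (which the paper omits entirely) is a welcome addition.

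However, the phrase ``well-defined by the symmetric argument, swapping the roles of $K_+$ and $K_-$'' hides the only nontrivial computation in the whole proof, and the symmetry you invoke does not literally exist: $\B$ and $\B_+$ behave quite differently on the two kernels. Elements of $K_+=\ker\B$ are \emph{fixed} by $\B_+$, while elements $k\in K_-=\ker\B_+$ satisfy $\B(k)=k^{-1}$, and the well-definedness of $\Theta$ used the latter fact specifically. Well-definedness of $\Theta^{-1}$ amounts to the implication $\B_+(g)\in K_+\Rightarrow \B(g)\in K_-$ (equivalently, after factoring out a $K_+$ element using the homomorphism property, to $\B(K_-)\subseteq K_-$), and this is precisely the displayed computation above --- it is not obtained from the $\Theta$ well-definedness check by exchanging indices. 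Likewise, your identification of both quotients with $G_\B/(K_+ * K_-)$ requires proving $\B_+^{-1}(K_+)=K_+ * K_-$, which again rests on the same implication. So your route goes through, but only after you supply this one computation explicitly; as written, the proposal assumes it.
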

The term Cayley transform is adapted from a similar notion from $r$-matrices~\cite{ES,STS}.
\begin{proof}
It is obvious that $\Theta$ is surjective. To see that it is also injective, if $\B_+(g)=g\B(g)\in K_+$, that is, $\B(g\B(g))=e$, then
we have
\[\B_+(\B(g))=\B(g)\B(\B(g))=\B(g*\B(g))=\B(g\Ad_{\B(g)} \B(g))=\B(g\B(g))=e,\]
which implies that $\B(g)\in K_-$. This proves that $\Theta$ is injective.

We next show that $\Theta$ is a Lie group homomorphism, which follows from
\[\Theta(\overline{\B(g)}\overline{\B(g')})=\Theta(\overline{\B(g*g')})=\overline{\B_+(g*g')}=\overline{\B_+(g)}\overline{\B_+(g')}=\Theta(\overline{\B(g)})\Theta(\overline{\B(g)}),\]
by Proposition \mref{B-}. Therefore, $\Theta$ is a Lie group isomorphism.
\end{proof}
Now we consider the product Lie group $(G_+\times G_-,\cdot_D)$, whose Lie group product is
\[(g_+,g_-)\cdot_D(h_+,h_-):=(g_+h_+,g_-h_-), \qquad \forall~g_+,h_+\in G_+,g_-,h_-\in G_-.\]
Let $G_\Theta\subset G_+\times G_-$ denote the subset
\[G_\Theta:=\left\{(g_+,g_{-})\in G_+\times G_-; \Theta(\overline{g_-})=\overline{g_+}\right\}.\]
Define a map $\Phi:G\to G_\Theta$ by
\[\Phi(g):=(\B_+(g),\B(g)).\]
\begin{lem}
With the above notations, $G_\Theta$ is a Lie subgroup of $(G_+\times G_-,\cdot_D)$. Moreover, the map $\Phi$ is a Lie group isomorphism from $(G,*)$ to $G_\Theta$.
\end{lem}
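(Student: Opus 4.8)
The plan is to realize $G_\Theta$ as the image of $\Phi$ and to produce an explicit smooth retraction, so that the submanifold structure of $G_\Theta$ and the fact that $\Phi$ is an isomorphism fall out together.

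First I would record that $\Phi$ is a smooth homomorphism landing in $G_\Theta$. Both $\B\colon G_\B\to G$ and $\B_+\colon G_\B\to G$ are Lie group homomorphisms (the former since $\B(g*h)=\B(g\Ad_{\B(g)}h)=\B(g)\B(h)$, the latter by Proposition~\mref{B-}), so $\Phi(g)=(\B_+(g),\B(g))$ is a smooth homomorphism $(G,*)\to(G_+\times G_-,\cdot_D)$. Because $\Theta$ and the quotient projections are group homomorphisms, the condition $\Theta(\overline{g_-})=\overline{g_+}$ defining $G_\Theta$ is stable under $\cdot_D$ and under inversion, so $G_\Theta$ is an abstract subgroup; and $\Phi(g)\in G_\Theta$ holds for every $g$ because $\Theta(\overline{\B(g)})=\overline{\B_+(g)}$ is exactly the defining relation of the Cayley transform.

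Next I would show $\Phi$ is an embedding by exhibiting a smooth left inverse. Injectivity is immediate: $\B_+(g)=\B_+(h)$ together with $\B(g)=\B(h)$ gives $g\B(g)=h\B(g)$, hence $g=h$. Now set $r\colon G_+\times G_-\to G$, $r(g_+,g_-)=g_+g_-^{-1}$; since $\B_+(g)\B(g)^{-1}=g\B(g)\B(g)^{-1}=g$, we have $r\circ\Phi=\id_G$. A smooth map with a smooth left inverse is an injective immersion that is a homeomorphism onto its image, i.e. a smooth embedding; thus $\im\Phi$ is an embedded submanifold of $G_+\times G_-$ and $\Phi\colon G_\B\to\im\Phi$ is a diffeomorphism with inverse $r|_{\im\Phi}$.

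It then remains to prove $\im\Phi=G_\Theta$, which I expect to be the main obstacle since it is where the normal subgroups $K_\pm$ and the construction of $\Theta$ genuinely enter. Given $(g_+,g_-)\in G_\Theta$, I would first pick $g$ with $\B(g)=g_-$, available because $g_-\in G_-=\im\B$. Then $\overline{\B_+(g)}=\Theta(\overline{\B(g)})=\Theta(\overline{g_-})=\overline{g_+}$, so $g_+=\B_+(g)\,k$ for some $k\in K_+=\ker\B$. Replacing $g$ by $g*k$ and using $\B(k)=e$ and $\B_+(k)=k\B(k)=k$, one computes $\B(g*k)=g_-$ and $\B_+(g*k)=\B_+(g)k=g_+$, so $\Phi(g*k)=(g_+,g_-)$ and $\Phi$ is onto $G_\Theta$. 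Combined with the previous paragraph, this identifies $G_\Theta=\im\Phi$ as an embedded Lie subgroup of $(G_+\times G_-,\cdot_D)$ and shows that $\Phi\colon(G,*)\to G_\Theta$ is an isomorphism of Lie groups, its inverse being the smooth map $(g_+,g_-)\mapsto g_+g_-^{-1}$.
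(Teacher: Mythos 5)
Your proof is correct and its group-theoretic core is essentially the paper's argument: the subgroup property of $G_\Theta$ follows from $\Theta$ being a homomorphism (Proposition~\mref{CB}), injectivity of $\Phi$ is immediate from $g=\B_+(g)\B(g)^{-1}$, and surjectivity onto $G_\Theta$ is obtained exactly as in the paper by choosing $g$ with $\B(g)=g_-$, writing $g_+=\B_+(g)k$ with $k\in K_+$, and correcting to $g*k$. The one genuine addition is your explicit smooth retraction $r(g_+,g_-)=g_+g_-^{-1}$ with $r\circ\Phi=\id_G$, which upgrades the set-theoretic bijection to a smooth embedding and thereby actually justifies the word ``Lie'' in ``Lie subgroup'' and ``Lie group isomorphism'' --- a point the paper's proof passes over silently. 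That extra step is sound (a smooth left inverse forces $\Phi$ to be an injective immersion and a homeomorphism onto its image), and it buys a cleaner identification of the inverse isomorphism as $(g_+,g_-)\mapsto g_+g_-^{-1}$, which is precisely the map underlying the factorization in Theorem~\mref{thm:factgp}.
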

\begin{proof}
By Proposition \mref{CB}, for any $(g_+,g_-),(h_+,h_-)\in G_\Theta$, we have
\[\Theta(\overline{g_-h_-})=\Theta(\overline{g_-}\overline{h_-})=\Theta(\overline{g_-})\Theta(\overline{h_-})=\overline{g_+}\overline{h_+}=\overline{g_+h_+},\]
which implies that $(g_+h_+,g_-h_-)\in G_\Theta$ and then $G_\Theta$ is a subgroup of $(G_+\times G_-,\cdot_D)$.

We next check that $\Phi$ is a bijection. Let $g\in G$ such that $\Phi(g)=(e,e)$. Then we have $\B(g)=g\B(g)=e$. Thus $g=e$, which implies that $\Phi$ is injective. For any $(g_+,g_-)\in G_\Theta$, we have $\Theta(\overline{g_-})=\overline{g_+}$. Since $g_-\in G_-$, there exists $g\in G$ such that $\B(g)=g_-$. Hence we get
\[\Theta(\overline{g_-})=\Theta(\overline{\B(g)})=\overline{g\B(g)}.\]
Therefore, $\overline{g_+}=\overline{g\B(g)}$, which means that there exists $k\in K_+$ such that
\[g_+=g\B(g)k.\]
Let $g'=g*k$. Then we have \[\Phi(g')=(g*k\B(g*k),\B(g*k))=(g\B(g)k\B(g)^{-1}\B(g),\B(g))=(g_+,g_-).\]Therefore, $\Phi$ is surjective.

Finally, for any $g,h\in G$, by Proposition \mref{B-}, we have
\begin{eqnarray*}
\Phi(g*h)&=&(\B_+(g*h),\B(g*h))\\ &=&(\B_+(g)\B_+(h),\B(g)\B(h))\\ &=&(\B_+(g),\B(g))\cdot_D(\B_+(h),\B(h))\\ &=&\Phi(g)\cdot_D\Phi(h).
\end{eqnarray*}
Therefore, $\Phi$ is a Lie group homomorphism and hence a Lie group isomorphism.
\end{proof}

\begin{thm}\label{thm:facgroup}
\rm{(Factorization theorem of Rota-Baxter Lie groups)}
Let $(G,\B)$ be a Rota-Baxter Lie group. Then  every element $g\in G$ can be uniquely expressed as $g=g_+g_{-}^{-1}$ for $(g_+,g_-)\in G_\Theta$.
\mlabel{thm:factgp}
\end{thm}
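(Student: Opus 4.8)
The plan is to leverage the Lie group isomorphism $\Phi:(G,*)\to G_\Theta$ established in the preceding lemma, which already encodes the existence and uniqueness of the factorization in a structural form. Given $g\in G$, I would set $g_+:=\B_+(g)=g\B(g)$ and $g_-:=\B(g)$, so that $(g_+,g_-)=\Phi(g)\in G_\Theta$. Then the claimed decomposition is immediate: $g_+g_-^{-1}=g\B(g)\B(g)^{-1}=g$. This handles existence and simultaneously exhibits the factorization explicitly in terms of the Rota-Baxter operator, which is the conceptual payoff.

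For uniqueness, suppose $g=g_+g_-^{-1}$ with $(g_+,g_-)\in G_\Theta$. The idea is to show that any such pair must arise as $\Phi(g')$ for some $g'\in G$ and then invoke injectivity of $\Phi$. First I would recover a group element: since $(g_+,g_-)\in G_\Theta$ means $\Theta(\overline{g_-})=\overline{g_+}$ and $g_-\in G_-=\im\B$, there exists (as in the surjectivity argument of the lemma) an element $g'\in G$ with $\B(g')=g_-$ and $\B_+(g')=g_+$, i.e. $\Phi(g')=(g_+,g_-)$. Then $g'=\B_+(g')\B(g')^{-1}=g_+g_-^{-1}=g$, so in fact $g'=g$ and the pair is forced to be $(g_+,g_-)=(\B_+(g),\B(g))$. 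Hence the factorization is unique.

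The main obstacle, and the step I would write out most carefully, is the uniqueness direction: one must make sure that the ambiguity in choosing a preimage $g'$ under $\B$ (the kernel $K_+$) does not produce genuinely distinct factorizations. The surjectivity computation in the lemma already navigates this by adjusting $g'$ by an element of $K_+$ via the $*$-product so that $\Phi(g')$ lands exactly on $(g_+,g_-)$; I would reuse precisely that computation. The cleanest formulation is simply: the map $g\mapsto g_+g_-^{-1}$ is the composite of $\Phi$ with the bijection $G_\Theta\to G$, $(g_+,g_-)\mapsto g_+g_-^{-1}$, and since $\Phi$ is a bijection it suffices to check that $(g_+,g_-)\mapsto g_+g_-^{-1}$ is itself a bijection onto $G$ (equivalently, that its composite with $\Phi$ is the identity on $G$, which the existence computation above verifies). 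I would therefore present existence via the explicit formula and uniqueness by noting that $g=g_+g_-^{-1}$ together with $(g_+,g_-)\in G_\Theta$ forces $(g_+,g_-)=\Phi(g)$, appealing to the injectivity of $\Phi$ already proved.
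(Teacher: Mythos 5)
Your proposal is correct, and the existence half coincides with the paper's (both take $g_+=\B_+(g)$, $g_-=\B(g)$, so that $(g_+,g_-)=\Phi(g)\in G_\Theta$ and $g_+g_-^{-1}=g\B(g)\B(g)^{-1}=g$). Where you diverge is in the uniqueness half. The paper argues directly with two competing factorizations $g=g_+g_-^{-1}=h_+h_-^{-1}$: it forms the element $\B(s)=h_+^{-1}g_+=h_-^{-1}g_-\in G_+\cap G_-$, applies the Cayley transform to get $\overline{s\B(s)}=\overline{\B(s)}$ in $G_+/K_+$, and then uses the normality of $K_+$ in $G_+$ (Lemma~\mref{lem:normal}) to force $s\in K_+$, hence $\B(s)=e$. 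You instead observe that the factorization map $\Psi\colon G_\Theta\to G$, $(g_+,g_-)\mapsto g_+g_-^{-1}$, satisfies $\Psi\circ\Phi=\id_G$, and then lift any pair $(g_+,g_-)\in G_\Theta$ to some $g'$ with $\Phi(g')=(g_+,g_-)$ via the surjectivity of $\Phi$ proved in the preceding lemma; the computation $g'=\B_+(g')\B(g')^{-1}=g_+g_-^{-1}=g$ then pins the pair down to $\Phi(g)$. (A small remark: your argument actually only uses the surjectivity of $\Phi$ plus this computation, not its injectivity, so your closing appeal to injectivity is not needed.) Your route is shorter and cleanly packages uniqueness as the statement that $\Psi$ is the inverse bijection to $\Phi$, at the cost of leaning on the surjectivity proof of the lemma, which is where the adjustment by $K_+$ via the $*$-product is hidden; the paper's route redoes that work explicitly and makes visible exactly how the potential ambiguity in $K_+$ is killed. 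Both are valid proofs.
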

\begin{proof}
For any $g\in G$, we have $g=\B_+(g)\B(g)^{-1}$. To see the uniqueness, if $g=g_+g_-^{-1}=h_+h_-^{-1}$, then $h_+^{-1}g_+=h_-^{-1}g_-\in G_+\cap G_-$ and $\Theta(\overline{h_-^{-1}g_-})=\overline{h_+^{-1}g_+}$.  Suppose $h_+^{-1}g_+=h_-^{-1}g_-=\B(s)\in G_+\cap G_-$ for some $s\in G$. Then
\[\Theta(\overline{\B(s)})=\overline{\B_+(s)}=\overline{s\B(s)}=\overline{\B(s)},\]
 which implies that $s\B(s)=\B(s)k$ for some $k\in K_+$. Note that $K_+\subset G_+$ is a normal subgroup. We have $s=\B(s)k\B(s)^{-1}\in K_+$. Therefore, $\B(s)=e$. Hence we get that $h_+=g_+$ and $h_-=g_-$.
\end{proof}

Conversely, we obtain the following generalization of Lemma~\ref{lem:RBPG} by the same argument.
\begin{pro}
Suppose  that $G$ is a Lie group together with two Lie subgroups $G_\pm$. Let $\tilde{G}\subset G_+\times G_-$ be a Lie subgroup. Assume that each element $g\in G$ can be uniquely decomposed as
\begin{eqnarray}\mlabel{fp}
g=g_+g_-^{-1}, \qquad (g_+,g_-)\in \tilde{G}.
\end{eqnarray}
Then $\B:G\to G$ defined by $\B(g_+g_-^{-1})=g_-$ is a Rota-Baxter operator on the Lie group $G$.
\end{pro}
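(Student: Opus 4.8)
The plan is to verify the Rota-Baxter relation~\eqref{RBgroup} for $\B$ directly, following essentially verbatim the computation used for Lemma~\ref{lem:RBPG}, of which this statement is the natural generalization (the case $\tilde{G}=G_+\times G_-$). Fix $g,h\in G$ and write their unique decompositions $g=g_+g_-^{-1}$ and $h=h_+h_-^{-1}$ with $(g_+,g_-),(h_+,h_-)\in\tilde{G}$, so that by definition $\B(g)=g_-$ and $\B(h)=h_-$.

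The central step is to massage the argument $g\Ad_{\B(g)}h$ into the canonical factored form. Substituting $\B(g)=g_-$ and expanding the adjoint action, I compute
\[
g\,\Ad_{\B(g)}h = g_+g_-^{-1}\,(g_-\,h\,g_-^{-1}) = g_+\,h\,g_-^{-1} = g_+h_+h_-^{-1}g_-^{-1} = (g_+h_+)(g_-h_-)^{-1}.
\]
Thus $g\Ad_{\B(g)}h$ has been presented as $(g_+h_+)(g_-h_-)^{-1}$ with first factor in $G_+$ and second factor in $G_-$, since $G_\pm$ are subgroups.

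The key structural input is that $\tilde{G}$ is a subgroup of $(G_+\times G_-,\cdot_D)$: from $(g_+,g_-),(h_+,h_-)\in\tilde{G}$ we conclude $(g_+h_+,g_-h_-)\in\tilde{G}$. Hence the presentation just obtained is precisely the unique decomposition of the form~\eqref{fp} for the element $g\Ad_{\B(g)}h$, and reading off the second component gives $\B(g\Ad_{\B(g)}h)=g_-h_-=\B(g)\B(h)$, which is~\eqref{RBgroup}.

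I do not expect any genuine obstacle: the only content is the cancellation $g_-^{-1}(g_-\cdot g_-^{-1})$ in the adjoint term together with the closure of $\tilde{G}$ under the product $\cdot_D$; uniqueness of the decomposition~\eqref{fp} guarantees both that $\B$ is well-defined and that the factored form above legitimately computes $\B$ on $g\Ad_{\B(g)}h$. Smoothness of $\B$ follows as usual from smoothness of the factorization map, exactly as in the situation of Lemma~\ref{lem:RBPG}.
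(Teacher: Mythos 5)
Your proof is correct and is exactly the argument the paper intends: it states this proposition follows ``by the same argument'' as Lemma~\ref{lem:RBPG}, and your computation $g\,\Ad_{\B(g)}h=(g_+h_+)(g_-h_-)^{-1}$ together with the closure of $\tilde{G}$ under the componentwise product is that same argument, correctly adapted to the decomposition $g=g_+g_-^{-1}$.
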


At the end of this section, we give some remarks on the applications of the factorization theorem of Rota-Baxter Lie groups to the factorization theorems of Semenov-Tian-Shansky~\cite{STS,STS2}. We refer the reader to the original references for the related background.

\begin{rmk} \label{rk:rlnsts}
  \begin{enumerate}
    \item \label{it:rlnsts1}
    Applying differentiation, Theorem \ref{thm:facgroup} gives a factorization of Rota-Baxter Lie algebras of weight 1, whose form is similar to the Atkinson factorization of Rota-Baxter associative algebras~\cite{At,Gub}. As noted in the introduction, $B$ is a Rota-Baxter operator of weight 1 on a Lie algebra $(\g,[\cdot,\cdot]_\g)$ if and only if $R:=\id +2B$ satisfies the modified Yang-Baxter equation in \eqref{eq:mybe}.
Then it is straightforward to see that the above factorization of Rota-Baxter Lie algebras of weight $1$ coincides with the Infinitesimal Factorization Theorem of Semenov-Tian-Shansky for Lie algebras~\cite[Prop.~9]{STS}.
\item   Integrating   the above Infinitesimal Factorization Theorem of Lie algebras, Semenov-Tian-Shansky obtained his famous Global Factorization Theorem for Lie groups, held in a small enough neighborhood of the unit. This is a fundamental tool in studying integrable systems. See \cite[Theorem.~11]{STS} and \cite[Theorem.~3.3]{STS2} for more details. Our Theorem~\ref{thm:facgroup} shows that the Global Factorization Theorem can be derived directly on the group level and is a truly global result since it applies to all elements of the Lie group.
\end{enumerate}
\end{rmk}

In summary, we have the following commutative diagram on the relationship between the infinitesimal and global versions of the Factorization Theorem, where the classical approach took the right-down path and our approach takes the down-right path.
  \begin{equation}
\begin{split}
\xymatrix{
	(\g,R) \ar^{\text{factorization}}[rrr]\ar_{R=\id+2B}[d]&&& \g_\theta\subseteq \g_+\oplus \g_- \ar@{=}[d]\\
	(\g,B) \ar^{\text{factorization}}[rrr]  \ar_{\text{integration}}[d] &&& \g_\theta\subseteq \g_+\oplus \g_-  \ar^{\text{integration}}[d]\\
	(G,\B) \ar^{\text{factorization}}[rrr] &&& G_\Theta\subseteq G_+\times G_-.&&&
}
\end{split}
\mlabel{eq:factorcompare}
\end{equation}
Here the factorizations in the first two rows are the Infinitesimal Factorization Theorem for $(\g,R)$ from the modified Yang-Baxter equation by Semenov-Tian-Shansky and its equivalent form for the Rota-Baxter Lie algebra $(\g,B)$ in Remark~\ref{rk:rlnsts}\eqref{it:rlnsts1}.

\section{Rota-Baxter Lie algebroids and Rota-Baxter Lie groupoids}
\mlabel{sec:oid}
In this section, we introduce the notions of Rota-Baxter Lie algebroids and Rota-Baxter Lie groupoids. We show that a Rota-Baxter Lie algebroid gives rise to a post-Lie algebroid and that a Rota-Baxter Lie algebroid can be obtained from a Rota-Baxter groupoid by differentiation. Moreover, actions of Rota-Baxter Lie algebras, actions of Rota-Baxter Lie groups and actions of post-Lie algebras are introduced to produce Rota-Baxter Lie algebroids, Rota-Baxter Lie groupoids and post-Lie algebroids.

\subsection{Rota-Baxter Lie algebroids}

We introduce the notion of a Rota-Baxter Lie algebroid and show that an action of a Rota-Baxter Lie algebra naturally gives rise to a  Rota-Baxter Lie algebroid.

\begin{defi} (\mcite{Mkz:GTGA})
  A {\bf Lie algebroid} structure on a vector bundle $\huaA\longrightarrow M$ is
a pair consisting of a Lie algebra structure $[\cdot,\cdot]_\huaA$ on
the section space $\Gamma(\huaA)$ and a vector bundle morphism
$$a_\huaA:\huaA\longrightarrow TM$$
from $\huaA$ to the tangent bundle $TM$, called the {\bf anchor}, satisfying the relation
\begin{equation}\mlabel{eq:LAf}~[x,fy]_\huaA=f[x,y]_\huaA+a_\huaA(x)(f)y,\quad \forall~x,y\in\Gamma(\huaA),~f\in
\CWM.\end{equation}
When $a_\huaA$ is surjective, we call $\huaA$ a {\bf transitive Lie algebroid}.
\end{defi}
We usually denote a Lie algebroid by $(\huaA,[\cdot,\cdot]_\huaA,a_\huaA)$ or simply $\huaA$
if there is no danger of confusion.

We now introduce the notion of a Rota-Baxter operator on a Lie algebroid.
\begin{defi}
Given a scalar $\lambda$, a {\bf Rota-Baxter operator of weight $\lambda$} on a transitive Lie algebroid $(\huaA,[\cdot,\cdot]_\huaA,a_\huaA)$ is a bundle map $\huaB:\ker(a_\huaA)\longrightarrow  \huaA$  covering the identity  such that
\begin{eqnarray}\mlabel{eq:RB1}
\qquad [\huaB(u),\huaB(v)]_\huaA=\huaB([\huaB(u),v]_\huaA)+\huaB([u,\huaB(v)]_\huaA)+\lambda \huaB([u,v]_\huaA),\qquad \forall~ u,v\in \Gamma(\ker(a_\huaA)).
\end{eqnarray}
A {\bf Rota-Baxter Lie algebroid} is a Lie algebroid equipped with a  Rota-Baxter operator of weight $1$.
\end{defi}

Since $\ker (a_\huaA)$ is an ideal of $\huaA$, the terms in \eqref{eq:RB1} are well-defined.

\begin{rmk}
Here we use $\ker (a_\huaA)$ instead of $\huaA$ to ensure that Eq.~\eqref{eq:RB1} is compatible with the function linearity of $\huaB$, that is,  $\huaB(fu)=f\huaB(u)$ for any function $f\in \CWM$. Indeed, imposing the function linearity for a bundle map $\huaB:\huaA\lon \huaA$ in the definition of a  Rota-Baxter operator of weight $\lambda$ on a   Lie algebroid $(\huaA,[\cdot,\cdot]_\huaA,a_\huaA)$, we obtain
\begin{eqnarray*}
 0&=&[\huaB(fu),\huaB(v)]_\huaA-\huaB([\huaB(fu),v]_\huaA+[fu,\huaB(v)]_\huaA+\lambda [fu,v]_\huaA)\\
 &=&[f\huaB(u),\huaB(v)]_\huaA-\huaB([f\huaB(u),v]_\huaA+[fu,\huaB(v)]_\huaA+\lambda [fu,v]_\huaA)\\
 &=&f[\huaB(u),\huaB(v)]_\huaA-a_\huaA(\huaB(v))(f)\huaB(u)-f\huaB[\huaB(u),v]_\huaA+a_\huaA(v)(f)\huaB^2(u)\\
 &&-f\huaB[u,\huaB(v)]_\huaA+a_\huaA(\huaB(v))(f)\huaB(u)-\lambda f\huaB [u,v]_\huaA+\lambda a_\huaA(v)(f)\huaB(u)\\
 &=&a_\huaA(v)(f)\huaB^2(u)+\lambda a_\huaA(v)(f)\huaB(u).
\end{eqnarray*}
Therefore, $\huaB$ needs to satisfy the additional condition
$$
a_\huaA(v)(f)\huaB^2(u)+\lambda a_\huaA(v)(f)\huaB(u)=0,\quad \forall u,v\in \Gamma(\huaA), f\in \CWM.
$$
Thus, it is natural to restrict the domain of $\huaB$ to $\ker (a_\huaA)$.
\end{rmk}

\begin{rmk}
A vector space is a vector bundle over a point. Therefore, a Lie algebra  is naturally a Lie algebroid with the anchor being zero.   It is obvious that a Rota-Baxter Lie algebroid reduces to a Rota-Baxter Lie algebra when the underlying Lie algebroid reduces to a Lie algebra.
\end{rmk}

We first give some simple examples of Rota-Baxter Lie algebroids.
\begin{ex}
  Let $(\g,[\cdot,\cdot]_\g,B)$ be a Rota-Baxter Lie algebra of weight $1$, and $M$ a manifold. Consider the trivial bundle $M\times\g$. Then the linear map $B:\g\lon\g$ naturally gives rise to a bundle map $\huaB:M\times\g\lon M\times\g,~(m,u)\longmapsto (m,B(u))$. Furthermore,  the Lie bracket $[\cdot,\cdot]_\g$ can be naturally extended to $\Gamma(M\times \g)=C^\infty(M)\otimes \g$, which is function linear.    Then it is obvious that $(M\times\g, [\cdot,\cdot]_\g,\huaB) $ is a Rota-Baxter Lie algebroid with the anchor being zero.
\end{ex}

\begin{ex}\label{ex:minus}
Let $(\huaA,[\cdot,\cdot]_\huaA,a_\huaA)$ be a transitive Lie algebroid. The minus of the inclusion
\[\huaB:\ker (a_\huaA)\mapsto \huaA,\qquad \huaB(u)=-u,\]
is naturally a Rota-Baxter operator of weight $1$ on $\huaA$.
\end{ex}

Now we construct a class of examples of particular interest, namely the action Rota-Baxter Lie algebroids.

First we recall actions of Lie algebras on manifolds and the associated action Lie algebroids (\mcite{Mkz:GTGA}). Let $\phi: \mathfrak{g}\longrightarrow \mathfrak{X}(M)$ be a left action of a Lie algebra  $\mathfrak{g}$   on a manifold $M$, that is, a Lie algebra homomorphism from $(\g,[\cdot,\cdot]_\g)$ to the Lie algebra $(\frkX(M),[\cdot,\cdot]_{TM})$ of vector fields. Then we have a Lie algebroid structure on the trivial bundle $\huaA=M\times\g$,
whose anchor $a_\huaA:M\times\g\lon TM$ and  Lie bracket $[\cdot,\cdot]_\huaA:\wedge^2(C^\infty(M)\otimes \g)\lon C^\infty(M)\otimes \g$ are given by
\begin{eqnarray}\mlabel{action1}
a_\huaA(m,u)&=&\phi(u)_m,\quad \forall m\in M,~u\in\g,\\  \mlabel{action2}{[fu,gv]}_\huaA&=&fg[u,v]_{\mathfrak{g}}+f\phi(u)(g)v-g\phi(v)(f)u,\quad \forall~u,v\in \mathfrak{g},f,g\in C^\infty(M).
\end{eqnarray}
This Lie algebroid is called the {\bf action Lie algebroid} of the Lie algebra $(\g,[\cdot,\cdot]_\g)$ and the action $\phi$, and denoted by $\g\times_\phi M$. If $\phi$ is a transitive action, then  $\g\times_\phi M$ is a transitive Lie algebroid.

\begin{pro}\mlabel{actioncase}
With the above notations,
a bundle map $\huaB:\ker (a_\huaA)\to \g\times_\phi M$ gives a Rota-Baxter operator of weight $1$ on $\g\times_\phi M$  if and only if $\huaB_m:=\huaB|_m:\ker (a_\huaA)_m\to \g$  for any $m\in M$ satisfies that
\[[\huaB_m(u),\huaB_m(v)]_\g=\huaB_m([\huaB_m(u),v]_\g+[u,\huaB_m(v)]_\g+[u,v]_\g),\qquad \forall~u,v\in \ker (a_\huaA)_m.\]
\end{pro}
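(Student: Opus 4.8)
The plan is to prove this as a pointwise statement, exploiting the fact that the Rota-Baxter identity \eqref{eq:RB1} is $C^\infty(M)$-linear once restricted to $\ker(a_\huaA)$. The key structural observation is that for the action Lie algebroid $\g\times_\phi M$, the sections of $\ker(a_\huaA)$ can be evaluated fiberwise, and the bracket $[\cdot,\cdot]_\huaA$ together with the bundle map $\huaB$ can be compared with the pointwise Lie algebra bracket on $\g$. First I would unwind what $\ker(a_\huaA)_m$ is: by \eqref{action1} it is $\{u\in\g \,|\, \phi(u)_m=0\}$, a subspace of $\g$ depending on $m$. Since $\huaB$ covers the identity and is a bundle map, it restricts to a linear map $\huaB_m:\ker(a_\huaA)_m\to\g$ on each fiber, and every section $u\in\Gamma(\ker(a_\huaA))$ satisfies $\huaB(u)|_m=\huaB_m(u(m))$.

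The heart of the argument is to evaluate the Rota-Baxter identity \eqref{eq:RB1} at a fixed point $m\in M$ and show it reduces exactly to the stated pointwise identity. For the forward direction, given constant sections $u,v\in\g$ (viewed as sections of the trivial bundle that happen to lie in $\ker(a_\huaA)$ near $m$), I would apply the bracket formula \eqref{action2}. Here the crucial simplification is that the anchor-dependent terms $f\phi(u)(g)v$ and $g\phi(v)(f)u$ in \eqref{action2} drop out: for sections lying in $\ker(a_\huaA)$ at $m$, the relevant $\phi(u)$ and $\phi(v)$ vanish at $m$, so evaluating the bracket at $m$ leaves only the $[u,v]_\g$ piece. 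Thus $[u,v]_\huaA|_m=[u(m),v(m)]_\g$ for sections valued in $\ker(a_\huaA)$, and similarly for the brackets involving $\huaB$. Substituting these into \eqref{eq:RB1} evaluated at $m$ yields precisely
\[[\huaB_m(u),\huaB_m(v)]_\g=\huaB_m([\huaB_m(u),v]_\g+[u,\huaB_m(v)]_\g+[u,v]_\g).\]

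For the converse, I would reverse this reasoning: assuming the pointwise identity holds at every $m$, I reconstruct the global identity \eqref{eq:RB1} for arbitrary sections $u,v\in\Gamma(\ker(a_\huaA))$. Because both sides of \eqref{eq:RB1} are $C^\infty(M)$-linear in $u$ and $v$ after restriction to $\ker(a_\huaA)$ (this is exactly the content of the preceding remark in the paper, which motivated restricting $\huaB$ to $\ker(a_\huaA)$), it suffices to verify the identity on a generating set of sections and then extend by function-linearity. Evaluating at each $m$ and invoking the hypothesis gives equality fiberwise, hence as sections.

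The main obstacle I anticipate is the care needed in handling the bracket formula \eqref{action2} when the sections are not globally in $\ker(a_\huaA)$ but only at the point $m$: the terms $\phi(u)(g)$ depend on derivatives, and one must argue cleanly that evaluation at $m$ kills exactly the right terms. The cleanest route is to note that the Rota-Baxter identity is tensorial (function-linear) in its arguments when restricted to $\ker(a_\huaA)$, so that both sides of \eqref{eq:RB1} define genuine bundle maps $\wedge^2\ker(a_\huaA)\to\huaA$; two bundle maps agree iff they agree fiberwise, reducing everything to the pointwise computation and bypassing any subtlety about derivative terms.
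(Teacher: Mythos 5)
The paper offers no argument here (its proof reads ``It follows from a straightforward verification''), so the only question is whether your verification actually closes, and there is a genuine gap at the step where you pass to fibers. Your opening move is right: both sides of \eqref{eq:RB1} are $C^\infty(M)$-bilinear in $u,v\in\Gamma(\ker(a_\huaA))$ (this is exactly what the paper's remark on function-linearity establishes), so their difference is a bundle map on $\ker(a_\huaA)\otimes\ker(a_\huaA)$ and the identity may be checked fiberwise. The gap is the claim that the anchor terms of \eqref{action2} drop out ``and similarly for the brackets involving $\huaB$''. They drop out of $[u,v]_\huaA|_m$ because $a_\huaA(u)=a_\huaA(v)=0$, but $\huaB(u)$ is \emph{not} a section of $\ker(a_\huaA)$ in general --- indeed $a_\huaA\circ\huaB$ is the anchor of the splitting post-Lie algebroid in Theorem~\ref{thm:rbpost} and is typically nonzero. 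Consequently $[\huaB(u),v]_\huaA|_m=[\huaB_m(u(m)),v(m)]_\g+D_Xv$, where $X=\phi(\huaB_m(u(m)))_m$ and $D$ is the flat connection on the trivial bundle $M\times\g$, and $[\huaB(u),\huaB(v)]_\huaA|_m$ acquires the terms $D_X(\huaB(v))-D_Y(\huaB(u))$ with $Y=\phi(\huaB_m(v(m)))_m$. Evaluating \eqref{eq:RB1} at $m$ therefore gives the displayed fiberwise identity \emph{plus} the discrepancy $\bigl(D_X(\huaB(v))-\huaB_m(D_Xv)\bigr)-\bigl(D_Y(\huaB(u))-\huaB_m(D_Yu)\bigr)$, that is, derivatives of the bundle map $\huaB$ itself. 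Your closing appeal to tensoriality does not dispose of these terms: tensoriality only guarantees that their combination depends on $u(m),v(m)$ alone, not that it vanishes. A complete proof must show that this combination is zero or explain how it is absorbed into the stated identity; as written you do neither.

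Two further symptoms of the same issue deserve attention. First, $D_Xv$ need not lie in $\ker(a_\huaA)_m$ (the flat connection does not preserve the isotropy subbundle), and in fact the argument $[\huaB_m(u),v]_\g+[u,\huaB_m(v)]_\g+[u,v]_\g$ appearing in the proposition need not lie in $\ker(a_\huaA)_m$ either, since the isotropy subalgebra is a subalgebra of $\g$ but not an ideal; only the full algebroid brackets $[\huaB(u),v]_\huaA$, etc., are guaranteed to take values in $\Gamma(\ker(a_\huaA))$. So the fiberwise formula does not even typecheck without further discussion, and your proof should confront this. Second, the groupoid analogue stated later in the paper relates $\B_m$ and $\B_n$ at two \emph{different} points $n=\B_m(g)^{-1}m$, and differentiating that relation is precisely what produces the $D_X\huaB$-type terms above. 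The honest fiberwise reduction of \eqref{eq:RB1} is the identity in which $\huaB_m$ is applied to the sum of the three \emph{algebroid} brackets evaluated at $m$ (which is where the derivative terms hide), and identifying that with the displayed $\g$-bracket identity is the real content behind the paper's ``straightforward verification''; your proposal passes over exactly this point.
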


\begin{proof}
  It follows from a straightforward verification.
\end{proof}

\begin{defi}
  An {\bf action of a Rota-Baxter Lie algebra} $(\g,[\cdot,\cdot]_\g,B)$ on a manifold $M$ is a homomorphism of Lie algebras $\phi:(\g,[\cdot,\cdot]_B)\lon \frkX(M)$,  where   the Lie bracket $[\cdot,\cdot]_{B}$ is defined by \eqref{Bbr}.
\end{defi}

Let $\phi:(\g,[\cdot,\cdot]_B)\lon \frkX(M)$ be an action of the Rota-Baxter Lie algebra  $(\g,[\cdot,\cdot]_\g,B)$ on a manifold $M$. Consider the direct sum bundle $\huaA:=(M\times \g)\oplus TM$. Then $\Gamma(\huaA)=(C^\infty(M)\otimes \g) \oplus \mathfrak{X}(M)$.  There  is naturally a Lie algebroid structure on $\huaA$ whose anchor $a_\huaA$ is the projection to $TM$ and whose bracket is determined by
 $$
  [fu+X,gv+Y]_\huaA:=fg[u,v]_\g+X(g) v-Y(f)u+[X,Y]_{TM},
  $$
for all $~X,Y\in \mathfrak{X}(M),u,v\in \g, f,g\in C^\infty(M)$.

Consider the bundle map $\huaB: \ker (a_\huaA)=M\times \g\longrightarrow (M\times \g)\oplus TM$  defined by
\begin{equation}\label{eq:actionRB}
\huaB(m,u):=(m,B(u), \phi(u)(m)),\qquad \forall m\in M,u\in \g.
\end{equation}

 \begin{pro}\label{pro:actionRB}
 With the above notations,
the bundle map $\huaB$ defined by \eqref{eq:actionRB} is a Rota-Baxter operator on the Lie algebroid $((M\times \g)\oplus TM,[\cdot,\cdot]_\huaA,a_\huaA)$.
 \end{pro}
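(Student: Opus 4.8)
The plan is to verify the Rota-Baxter relation \eqref{eq:RB1} with $\lambda=1$ for $\huaB$ directly, by computing both sides on arbitrary sections $u,v\in\Gamma(\ker(a_\huaA))=C^\infty(M)\otimes\g$ and comparing. Since $\ker(a_\huaA)=M\times\g$ here, a section has the form $u$ with values in $\g$, and the key structural input is that $\phi:(\g,[\cdot,\cdot]_B)\to\frkX(M)$ is a Lie algebra homomorphism for the \desc bracket, together with the fact that $(\g,B)$ is a Rota-Baxter Lie algebra of weight $1$. First I would record that on elements $\huaB(u)=(B(u),\phi(u))$, so that the $\g$-component of $\huaB$ is the original operator $B$ and the $TM$-component is the action vector field.

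Next I would compute the left-hand side $[\huaB(u),\huaB(v)]_\huaA$ using the bracket formula on $\huaA$: writing $\huaB(u)=B(u)+\phi(u)$ and $\huaB(v)=B(v)+\phi(v)$ (suppressing the basepoint), the bracket splits into the $\g$-part $[B(u),B(v)]_\g$ plus the cross terms $\phi(u)(\cdot)$ acting on the coefficient of $B(v)$ and vice versa, plus the $TM$-part $[\phi(u),\phi(v)]_{TM}$. The crucial simplification is that $\phi$ is a homomorphism for $[\cdot,\cdot]_B$, so $[\phi(u),\phi(v)]_{TM}=\phi([u,v]_B)=\phi([B(u),v]_\g+[u,B(v)]_\g+[u,v]_\g)$. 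Meanwhile on the right-hand side, the three terms $[\huaB(u),v]_\huaA$, $[u,\huaB(v)]_\huaA$ and $[u,v]_\huaA$ each produce a $\g$-component and, because $u,v$ lie in $\ker(a_\huaA)$ (their $TM$-components vanish), the only surviving anchor contributions come from the $B(u)$ or $B(v)$ sitting inside $\huaB$; applying $\huaB$ then reattaches an action vector field via its second component.

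I expect the computation to separate cleanly into two independent identities. The $\g$-valued components reduce to exactly the Rota-Baxter identity of weight $1$ for $B$, namely $[B(u),B(v)]_\g=B([B(u),v]_\g+[u,B(v)]_\g+[u,v]_\g)$, which holds by hypothesis (and which may be checked pointwise as in Proposition \mref{actioncase}). The $TM$-valued components reduce to the statement that $\phi([u,v]_B)=[\phi(u),\phi(v)]_{TM}$, which is precisely the hypothesis that $\phi$ is an action of the Rota-Baxter Lie algebra, i.e.\ a homomorphism from $(\g,[\cdot,\cdot]_B)$. Thus the proof is bookkeeping that routes each piece to one of these two inputs.

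The main obstacle will be the careful handling of function-linearity and the anchor/Leibniz cross terms: when $u=fu_0$ for $f\in C^\infty(M)$, the bracket $[\huaB(fu_0),\huaB(v)]_\huaA$ generates derivative terms $\phi(u_0)(g)$ etc., and I must confirm these match the corresponding derivative terms produced on the right-hand side after applying $\huaB$, so that the identity is genuinely $C^\infty(M)$-compatible rather than merely pointwise. The remark preceding the definition of a Rota-Baxter Lie algebroid is exactly what guarantees this works, since $\huaB$ has been restricted to $\ker(a_\huaA)$; I would invoke that to argue the anomalous $a_\huaA(v)(f)$ terms cancel, reducing everything to the pointwise identity of Proposition \mref{actioncase} combined with the homomorphism property of $\phi$. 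Granting these two facts, the verification closes.
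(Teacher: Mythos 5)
Your proposal is correct and follows essentially the same route as the paper: the Rota--Baxter identity for $\huaB$ splits along the direct sum $(M\times\g)\oplus TM$ into the weight-$1$ Rota--Baxter identity for $B$ in the $\g$-component and the identity $[\phi(u),\phi(v)]_{TM}=\phi([u,v]_B)$ in the $TM$-component, i.e.\ exactly the two hypotheses. (One small wording slip: the anchor contribution of $\huaB(u)$ is $\phi(u)$, not $B(u)$; your argument elsewhere uses this correctly, so nothing breaks.)
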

 The Rota-Baxter algebroid $((M\times \g)\oplus TM,[\cdot,\cdot]_\huaA,a_\huaA,\huaB)$ will be called the {\bf action Rota-Baxter algebroid} in the sequel.
 \begin{proof}
   The bundle map $\huaB$
is a Rota-Baxter operator of weight $1$ on $\huaA$, that is, \eqref{eq:RB1} holds for $\lambda=1$, if and only if
\begin{eqnarray*}
[B(u),B(v)]_\g&=&B([B(u),v]_\g+[u,B(v)]_\g+[u,v]_\g),\qquad \forall~ u,v\in \g,\\
{}[\phi(u),\phi(v)]_{TM}&=&\phi([B(u),v]_\g+[u,B(v)]_\g+[u,v]_\g).
\end{eqnarray*}
That is, $B$ is a Rota-Baxter operator of weight $1$ on the Lie algebra $(\g,[\cdot,\cdot]_\g)$ and $\phi$ is a left action of the Lie algebra $(\g, [\cdot,\cdot]_{B})$ on $M$.
 \end{proof}

\subsection{Relations to post-Lie algebroids}

A notion that is closely related to a Rota-Baxter Lie algebra is the post-Lie algebra. We recall some background in order to present a more general relationship between Rota-Baxter Lie algebroids and post-Lie algebroids. Moreover, we introduce the notion of actions of post-Lie algebras on manifolds, which produce a class of interesting  post-Lie algebroids.

\begin{defi} (\mcite{Val})
A {\bf post-Lie algebra} $(\g,[\cdot,\cdot]_\g,\rhd)$ consists of a Lie algebra $(\g,[\cdot,\cdot]_\g)$ and a binary product $\rhd:\g\otimes\g\lon\g$ such that
\begin{eqnarray}
\mlabel{Post-1}u\rhd[v,w]_\g&=&[u\rhd v,w]_\g+[v,u\rhd w]_\g,\\
\mlabel{Post-2}[u,v]_\g\rhd w&=&a_\rhd(u,v,w)-a_\rhd(v,u,w),
\end{eqnarray}
here $a_{\rhd}(u,v,w):=u\rhd(v\rhd w)-(u\rhd v)\rhd w $ and $u,v,w\in \g.$
\end{defi}

Define $L_\rhd:\g\lon \gl(\g)$ by $L_\rhd(u)(v)=u\rhd v$. Then by \eqref{Post-1},    $L_\rhd$ is a linear map from $\g$ to $\Der(\g)$.
\begin{rmk}
Let $(\g,[\cdot,\cdot]_\g,\rhd)$ be a post-Lie algebra. If the Lie bracket $[\cdot,\cdot]_\g=0$, then $(\g,\rhd)$ becomes a pre-Lie algebra. Thus,  a post-Lie algebra can be viewed as a nonabelian version of a pre-Lie algebra. See \mcite{BG,Burde16,Burde19} for the classifications of post-Lie algebras on certain Lie algebras, and \mcite{BGN,Munthe-Kaas-Lundervold} for applications of post-Lie algebras in integrable systems and numerical integrations.
\end{rmk}

The notion of a post-Lie algebroid was given in \mcite{Munthe-Kaas-Lundervold} in the study of geometric numerical analysis. Post-Lie algebroids are geometrization of post-Lie algebras. See \cite{MSV} for more applications of post-Lie algebroids.

\begin{defi}\mlabel{defi:postLA}
A {\bf post-Lie algebroid} structure on a vector bundle
$A\longrightarrow M$ is a triple that consists of a $C^\infty(M)$-linear Lie
algebra structure $[\cdot,\cdot]_A$ on $\Gamma(A)$, a  bilinear operation   $\rhd_A:\Gamma(A)\times \Gamma(A)\longrightarrow \Gamma(A)$   and a
vector bundle morphism $a_A:A\longrightarrow TM$, called the {\bf anchor},
such that $(\Gamma(A),[\cdot,\cdot]_A,\rhd_A)$ is a post-Lie algebra, and
for all $f\in\CWM$ and $u,v\in\Gamma(A)$, the following
relations are satisfied:
\begin{itemize}
\item[\rm(i)]$~u\rhd_A(fv)=f(u\rhd_A v)+a_A(u)(f)v,$
\item[\rm(ii)] $(fu)\rhd_A v=f(u\rhd_A v).$
\end{itemize}
\end{defi}

We usually denote a post-Lie algebroid by $(A,[\cdot,\cdot]_A,\rhd_A, a_A)$.  If the Lie algebra structure $[\cdot,\cdot]_A$ in a post-Lie algebroid $(A, [\cdot,\cdot]_A, \rhd_A, a_A)$ is abelian, then it becomes a left-symmetric algebroid, which is also called a Koszul-Vinberg algebroid. See \mcite{LSB2,LSBC,Boyom1,Boyom2} for more details.

A Rota-Baxter Lie algebra gives rise to a post-Lie algebra~\cite{BBGN}. More precisely, let $(\g,[\cdot,\cdot]_\g,B)$ be a Rota-Baxter Lie algebra. Then for $\rhd$ defined by
$$u\rhd v=[B(u),v]_\g,$$
 $(\g,[\cdot,\cdot]_\g,\rhd)$ is a post-Lie algebra.
We call it the {\bf splitting post-Lie algebra} of the Rota-Baxter Lie algebra $(\g,B)$ since this is a special case of the process of the splitting of an operad with a Rota-Baxter operator~\cite{BBGN}.
As the geometrization of this fact, we have

\begin{thm} \mlabel{thm:rbpost}
Let  $(\huaA,[\cdot,\cdot]_\huaA,a_\huaA,\huaB)$ be  a Rota-Baxter   Lie algebroid. Let $A=\ker(a_\huaA).$ Then for $\rhd_A$ defined by
\begin{eqnarray}
u\rhd_A v:=[\huaB(u),v]_\huaA,\quad \forall u,v\in\Gamma(\ker(a_\huaA)),
\end{eqnarray}
$(A,[\cdot,\cdot]_\huaA,\rhd_A, a_\huaA\circ \huaB)$ is a post-Lie algebroid, called the {\bf splitting post-Lie algebroid} of the Rota-Baxter Lie algebroid $(\huaA,[\cdot,\cdot]_\huaA,a_\huaA,\huaB)$.
 \end{thm}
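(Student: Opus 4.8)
The plan is to verify the three requirements in Definition~\ref{defi:postLA} for the triple $(A,[\cdot,\cdot]_\huaA,\rhd_A)$ equipped with the anchor $a_\huaA\circ\huaB$, each of which will follow from a property already recorded in the excerpt: the Jacobi identity for $[\cdot,\cdot]_\huaA$, the Rota-Baxter relation \eqref{eq:RB1} with $\lambda=1$, the $\CWM$-linearity of the bundle map $\huaB$, and the Leibniz rule \eqref{eq:LAf}. Before doing so I would dispatch the bookkeeping. Because $A=\ker(a_\huaA)$, the anchor vanishes on sections of $A$, so \eqref{eq:LAf} shows that the restriction of $[\cdot,\cdot]_\huaA$ to $\Gamma(A)$ is $\CWM$-linear; hence $(\Gamma(A),[\cdot,\cdot]_\huaA)$ is a $\CWM$-linear Lie algebra. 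Since $\ker(a_\huaA)$ is an ideal (as noted after \eqref{eq:RB1}), the element $u\rhd_A v=[\huaB(u),v]_\huaA$ lies in $\Gamma(A)$ whenever $v\in\Gamma(A)$, so $\rhd_A$ is a well-defined operation on $\Gamma(A)$; and $a_\huaA\circ\huaB$ is a genuine bundle morphism $A\to TM$, being the composite of the bundle maps $\huaB$ and $a_\huaA$.

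Next I would check the two post-Lie axioms, and here the computation is word-for-word the one recalled for the Lie algebra case just before the theorem, since it uses only the honest Lie bracket on $\Gamma(A)$ together with the Rota-Baxter relation. Axiom \eqref{Post-1} is immediate from the Jacobi identity, as $[\huaB(u),[v,w]_\huaA]_\huaA=[[\huaB(u),v]_\huaA,w]_\huaA+[v,[\huaB(u),w]_\huaA]_\huaA$. For \eqref{Post-2} I would expand the associator difference $a_{\rhd_A}(u,v,w)-a_{\rhd_A}(v,u,w)$; applying Jacobi to the two double-bracket terms collapses them to $[[\huaB(u),\huaB(v)]_\huaA,w]_\huaA$, while the remaining terms assemble into $-[\huaB([\huaB(u),v]_\huaA)+\huaB([u,\huaB(v)]_\huaA),w]_\huaA$. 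The Rota-Baxter relation \eqref{eq:RB1} then rewrites the bracketed combination as $\huaB([u,v]_\huaA)$, which is exactly $[u,v]_\huaA\rhd_A w$, as required.

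The remaining two conditions encode the interaction with the anchor, and this is the step where the choice $a_\huaA\circ\huaB$ is forced. Condition (ii) follows from the $\CWM$-linearity of $\huaB$ together with the vanishing of $a_\huaA$ on $A$: by skew-symmetry and \eqref{eq:LAf}, $(fu)\rhd_A v=[f\huaB(u),v]_\huaA=f[\huaB(u),v]_\huaA$, using $a_\huaA(v)=0$. Condition (i) follows directly from the Leibniz rule \eqref{eq:LAf} applied to $\huaB(u)\in\Gamma(\huaA)$, giving $u\rhd_A(fv)=[\huaB(u),fv]_\huaA=f[\huaB(u),v]_\huaA+a_\huaA(\huaB(u))(f)v$, which is precisely $f(u\rhd_A v)+(a_\huaA\circ\huaB)(u)(f)v$, confirming that $a_\huaA\circ\huaB$ is the correct anchor. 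None of these steps is a serious obstacle; the only points demanding care are the well-definedness of $\rhd_A$ (which rests on $\ker(a_\huaA)$ being an ideal) and keeping the function-linearity conditions aligned with the anchor, since condition (i) is exactly what pins down $a_\huaA\circ\huaB$ and would fail for any other choice.
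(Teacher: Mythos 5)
Your proposal is correct and follows essentially the same route as the paper's proof: establish the $\CWM$-linearity of the bracket on $\Gamma(\ker(a_\huaA))$ via \eqref{eq:LAf}, deduce the post-Lie algebra axioms from the Jacobi identity and the Rota-Baxter relation \eqref{eq:RB1}, and verify the two anchor-compatibility conditions by the same two one-line computations. The only difference is that you spell out the verification of \eqref{Post-1} and \eqref{Post-2}, which the paper leaves implicit; your expansion of the associator difference and its collapse via the Rota-Baxter identity is accurate.
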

 \begin{proof}
   By \eqref{eq:LAf}, the operation $[\cdot,\cdot]_\huaA$ on $\Gamma(\ker(a_\huaA))$ is $C^\infty(M)$-linear. By \eqref{eq:RB1}, we can deduce that $(\ker(a_\huaA),[\cdot,\cdot]_\huaA,\rhd_A)$ is a post-Lie algebra. Finally, for all $u,v\in\Gamma(\ker(a_\huaA))$ and $f\in C^\infty(M)$, we have
   \begin{eqnarray*}
     (fu)\rhd_A v&=&[f\huaB(u),v]_\huaA=f[\huaB(u),v]_\huaA=f u\rhd_A v,\\
     u\rhd_A (fv)&=&[\huaB(u),fv]_\huaA=f[\huaB(u),v]_\huaA+a_\huaA\circ \huaB(u)(f)v=f u\rhd_A v+a_\huaA\circ \huaB(u)(f)v,
   \end{eqnarray*}
   which implies that $(\ker(a_\huaA),[\cdot,\cdot]_\huaA,\rhd_A, a_\huaA\circ \huaB)$ is a post-Lie algebroid.
 \end{proof}

\begin{ex}
  Let $(\g,[\cdot,\cdot]_\g,\rhd)$ be a post-Lie algebra. Consider the trivial bundle $A=M\times \g.$ Then the operation $\rhd$  can be naturally extended to $\rhd_A$ defined on $\Gamma(M\times \g)=C^\infty(M)\otimes \g$, which is function linear.  The Lie bracket $[\cdot,\cdot]_\g$ can be similarly extended. It is obvious that $(M\times \g,[\cdot,\cdot]_\g,\rhd_A)$ is naturally a post-Lie algebroid with the anchor map being zero. We call it the {\bf post-Lie algebra  bundle}.
\mlabel{ex:bundlepL}
\end{ex}

\begin{ex}
The induced post-Lie algebroid of the Rota-Baxter operator given in Example \ref{ex:minus} is $(\ker (a_\huaA),[\cdot,\cdot]_\huaA,\rhd_A,0)$, where
$$u\rhd_A v:=-[u,v]_\huaA,\quad\forall  u,v\in\Gamma(\ker(a_\huaA)).$$
\end{ex}

In the sequel, we introduce actions of post-Lie algebras on manifolds, which will produce a class of interesting examples of post-Lie algebroids. Recall that a  {post-Lie algebra} $(\g,[\cdot,\cdot]_\g,\rhd)$ gives rise to a new {\bf subjacent} Lie bracket $\Courant{\cdot,\cdot}$:
\begin{equation}\label{eq:courant}
\Courant{u,v}=[u,v]_\g+u\rhd v-v\rhd u.
\end{equation}
\begin{defi}
  An {\bf action of a post-Lie algebra} $(\g,[\cdot,\cdot]_\g,\rhd)$ on a manifold $M$ is homomorphism of Lie algebras $\phi:(\g,\Courant{\cdot,\cdot})\lon \frkX(M).$
\end{defi}

\begin{rmk}
  If the post-Lie algebra reduces to a pre-Lie algebra, then the above definition reduces to the definition of actions of pre-Lie algebras on manifolds, which was given in~\cite{LSBC} for the study of left-symmetric algebroids.
\end{rmk}

It is straightforward to see that an action of a Rota-Baxter Lie algebra also gives an action of the splitting post-Lie algebra.

\begin{lem}\label{lem:repRBPL}
  Let $\phi:(\g,[\cdot,\cdot]_B)\lon \frkX(M)$ be an action of the Rota-Baxter Lie algebra  $(\g,[\cdot,\cdot]_\g,B)$ on a manifold $M$. Then $\phi$ is also an action of the splitting post-Lie algebra $(\g,[\cdot,\cdot]_\g,\rhd)$ from the Rota-Baxter Lie algebra.
\end{lem}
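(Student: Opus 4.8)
The plan is to observe that both notions of action appearing in the statement are, by definition, Lie algebra homomorphisms out of $\g$ equipped with a single bracket: an action of the Rota-Baxter Lie algebra $(\g,[\cdot,\cdot]_\g,B)$ is a homomorphism $\phi:(\g,[\cdot,\cdot]_B)\lon\frkX(M)$, while an action of the splitting post-Lie algebra $(\g,[\cdot,\cdot]_\g,\rhd)$ is a homomorphism $\phi:(\g,\Courant{\cdot,\cdot})\lon\frkX(M)$. Hence the lemma reduces to the purely algebraic claim that the descendent bracket $[\cdot,\cdot]_B$ and the subjacent bracket $\Courant{\cdot,\cdot}$ agree on $\g$; once this is established, the same map $\phi$ is a homomorphism for one bracket exactly when it is for the other, and no further analytic input about vector fields is needed.

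To prove this identity, I would substitute the splitting product $u\rhd v=[B(u),v]_\g$ into the definition \eqref{eq:courant} of the subjacent bracket, obtaining
\[\Courant{u,v}=[u,v]_\g+[B(u),v]_\g-[B(v),u]_\g,\qquad\forall~u,v\in\g.\]
Applying the antisymmetry of $[\cdot,\cdot]_\g$ to rewrite $-[B(v),u]_\g=[u,B(v)]_\g$ gives
\[\Courant{u,v}=[B(u),v]_\g+[u,B(v)]_\g+[u,v]_\g,\]
which is exactly the descendent bracket $[\cdot,\cdot]_B$ of \eqref{Bbr}. Thus $\Courant{u,v}=[u,v]_B$ for all $u,v\in\g$.

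With this identity in place the conclusion is immediate: the hypothesis that $\phi$ preserves $[\cdot,\cdot]_B$ is verbatim the requirement that it preserve $\Courant{\cdot,\cdot}$, so $\phi$ is an action of the splitting post-Lie algebra. There is no genuine obstacle here; the only point demanding care is the sign bookkeeping when matching the asymmetric term $-v\rhd u$ of the subjacent bracket against the term $[u,B(v)]_\g$ of $[\cdot,\cdot]_B$ through the antisymmetry of $[\cdot,\cdot]_\g$.
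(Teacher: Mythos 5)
Your proposal is correct and is precisely the ``straightforward'' verification the paper leaves implicit (the lemma is stated without proof after the remark that it is straightforward): substituting $u\rhd v=[B(u),v]_\g$ into the subjacent bracket $\Courant{\cdot,\cdot}$ of \eqref{eq:courant} and using antisymmetry recovers the descendent bracket $[\cdot,\cdot]_B$ of \eqref{Bbr}, so the two notions of action coincide. No issues.
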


Let $\phi:(\g,\Courant{\cdot,\cdot})\lon \frkX(M)$ be an {action of the post-Lie algebra} $(\g,[\cdot,\cdot]_\g,\rhd)$ on a manifold $M$. On the trivial bundle $A=M\times \g$, define the anchor $a_A:M\times\g\lon TM$ and  the bilinear operation $\rhd_A:\otimes^2(C^\infty(M)\otimes \g)\lon C^\infty(M)\otimes \g$   by
\begin{eqnarray}\mlabel{action11}
a_A(m,u)&:=&\phi(u)_m,\quad \forall m\in M,~u\in\g,\\
 \mlabel{action21}  (fu)\rhd_A gv  &:=&fgu\rhd v+f\phi(u)(g)v,\quad \forall~u,v\in \mathfrak{g},f,g\in C^\infty(M).
\end{eqnarray}

\begin{pro}\label{pro:actionPL}
  With the above notations, $(M\times\g, [\cdot,\cdot]_\g,\rhd_A,a_A)$ is a post-Lie algebroid, called the {\bf action post-Lie algebroid} of the post-Lie algebra $(\g,[\cdot,\cdot]_\g,\rhd)$.
\end{pro}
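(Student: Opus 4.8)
The plan is to verify the three defining conditions of a post-Lie algebroid from Definition~\ref{defi:postLA} for the triple $(M\times\g,[\cdot,\cdot]_\g,\rhd_A)$ with anchor $a_A$. The Lie bracket is the $C^\infty(M)$-bilinear extension of $[\cdot,\cdot]_\g$ to $\Gamma(M\times\g)=C^\infty(M)\otimes\g$, which is manifestly a $C^\infty(M)$-linear Lie bracket. I would then dispatch the two Leibniz-type conditions: condition (ii), $(fu)\rhd_A v=f(u\rhd_A v)$, is immediate from~\eqref{action21} since $\rhd_A$ is defined to be function-linear in its first argument; and condition (i) follows by expanding $u\rhd_A(fv)$ via~\eqref{action21} and identifying the resulting derivative term as $a_A(u)(f)v=\phi(u)(f)v$ through~\eqref{action11}.

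The substantive task is to prove that $(\Gamma(M\times\g),[\cdot,\cdot]_\g,\rhd_A)$ is a post-Lie algebra, i.e.\ that~\eqref{Post-1} and~\eqref{Post-2} hold for all sections. My strategy is to reduce each identity to the case of constant sections $u,v,w\in\g$. On constant sections one has $u\rhd_A v=u\rhd v$, the bracket is that of $\g$, and the associator $a_{\rhd_A}(u,v,w)$ collapses to $a_\rhd(u,v,w)$; hence both~\eqref{Post-1} and~\eqref{Post-2} reduce exactly to the post-Lie axioms of $(\g,[\cdot,\cdot]_\g,\rhd)$, which hold by hypothesis. To license this reduction I would check that, granting (i), (ii) and the $C^\infty(M)$-bilinearity of the bracket, both sides of each identity acquire the same additional anchor terms when any single argument is rescaled by a function $f$, so that validity on the spanning constant sections propagates to all of $\Gamma(M\times\g)$. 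For~\eqref{Post-1} this bookkeeping closes using only (i) and bilinearity, with the derivative terms matching on the two sides.

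I expect the main obstacle to be the scaling consistency of~\eqref{Post-2} in its last argument, which is also where the hypothesis on $\phi$ is used. Rescaling $w\mapsto fw$ and applying (i) repeatedly inside the nested products of the associators produces, beyond the expected $f$-multiple of~\eqref{Post-2}, a residual term proportional to $w$: on the side $[u,v]_\g\rhd_A w$ its coefficient is $\phi([u,v]_\g)(f)$, while on the side $a_{\rhd_A}(u,v,w)-a_{\rhd_A}(v,u,w)$ it is $[\phi(u),\phi(v)]_{TM}(f)-\phi(u\rhd v-v\rhd u)(f)$. These residuals agree precisely when $\phi([u,v]_\g+u\rhd v-v\rhd u)=[\phi(u),\phi(v)]_{TM}$, that is, when $\phi$ is a Lie algebra homomorphism for the subjacent bracket $\Courant{u,v}$ of~\eqref{eq:courant} into $(\frkX(M),[\cdot,\cdot]_{TM})$. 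This is exactly the defining property of an action of the post-Lie algebra, so the reduction goes through and~\eqref{Post-2} holds in general; the remaining steps are routine Leibniz-rule computations.
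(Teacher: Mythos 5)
Your proposal is correct and follows the same route as the paper, which simply states that the result ``follows from a direct verification''; you have carried out that verification explicitly. In particular, you correctly identify the one non-routine point, namely that the residual terms proportional to $w$ arising from rescaling the last argument of~\eqref{Post-2} match exactly because $\phi$ is a Lie algebra homomorphism for the subjacent bracket $\Courant{\cdot,\cdot}$, which is precisely the defining hypothesis of an action of a post-Lie algebra.
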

\begin{proof}
  It follows from a direct verification.
\end{proof}

Consider the action Rota-Baxter Lie algebroid given in Proposition \ref{pro:actionRB}. By Theorem \ref{thm:rbpost}, there is a splitting post-Lie algebroid. It turns out that this post-Lie algebroid is the action post-Lie algebroid.

\begin{cor}\label{cor:actionPL}
Let $\phi:(\g,[\cdot,\cdot]_B)\lon \frkX(M)$ be an action of a Rota-Baxter Lie algebra  $(\g,[\cdot,\cdot]_\g,B)$ on a manifold $M$.
Let $((M\times \g)\oplus TM,[\cdot,\cdot]_\huaA,a_\huaA,\huaB)$ be the action Rota-Baxter Lie algebroid given in Proposition \ref{pro:actionRB}.
Then the splitting post-Lie algebroid of $((M\times \g)\oplus TM,[\cdot,\cdot]_\huaA,a_\huaA,\huaB)$ is exactly the action post-Lie algebroid of the splitting post-Lie algebra $(\g,[\cdot,\cdot]_\g,\rhd)$ of the Rota-Baxter Lie algebra  $(\g,[\cdot,\cdot]_\g,B)$.
\end{cor}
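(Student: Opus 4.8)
The plan is to show that the two post-Lie algebroid structures, both carried by the trivial bundle $M\times\g$, agree datum by datum: same underlying bundle, same $C^\infty(M)$-linear Lie bracket, same product $\rhd_A$, and same anchor. First I would observe that the comparison even makes sense. By Lemma~\ref{lem:repRBPL} the action $\phi\colon(\g,[\cdot,\cdot]_B)\lon\frkX(M)$ of the Rota-Baxter Lie algebra is simultaneously an action of its splitting post-Lie algebra $(\g,[\cdot,\cdot]_\g,\rhd)$, since the subjacent bracket of the latter is $\Courant{u,v}=[u,v]_\g+[B(u),v]_\g+[u,B(v)]_\g=[u,v]_B$; hence the action post-Lie algebroid of Proposition~\ref{pro:actionPL} is well defined. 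On the splitting side, the anchor $a_\huaA$ of the action Rota-Baxter Lie algebroid is the projection onto $TM$, so $\ker(a_\huaA)=M\times\g$, which is exactly the bundle underlying the action post-Lie algebroid.

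The only computation with real content is the product $\rhd_A$. Using that the bundle map $\huaB$ is $C^\infty(M)$-linear, so $\huaB(fu)=fB(u)+f\phi(u)$, together with the defining bracket $[fu+X,gv+Y]_\huaA=fg[u,v]_\g+X(g)v-Y(f)u+[X,Y]_{TM}$ of the action Rota-Baxter Lie algebroid, I would compute for $f,g\in C^\infty(M)$ and $u,v\in\g$:
\[
(fu)\rhd_A(gv)=[\huaB(fu),gv]_\huaA=[fB(u)+f\phi(u),gv]_\huaA=fg[B(u),v]_\g+f\phi(u)(g)v.
\]
The term $f\phi(u)(g)v$ appears precisely because the $TM$-component of $\huaB(fu)$ is $f\phi(u)$, which differentiates the coefficient $g$. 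Comparing with the action post-Lie product of \eqref{action21}, namely $(fu)\rhd_A(gv)=fg\,(u\rhd v)+f\phi(u)(g)v$, and recalling that $u\rhd v=[B(u),v]_\g$ in the splitting post-Lie algebra, the two expressions coincide.

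It then remains to match the bracket and the anchor, which is immediate. Restricting $[\cdot,\cdot]_\huaA$ to $\Gamma(\ker a_\huaA)=C^\infty(M)\otimes\g$ kills all $TM$-components, giving $[fu,gv]_\huaA=fg[u,v]_\g$, the same $C^\infty(M)$-linear bracket used in the action post-Lie algebroid; and $a_\huaA\circ\huaB(u)=a_\huaA(B(u)+\phi(u))=\phi(u)=a_A(u)$. I do not expect a genuine obstacle here, as the argument is purely bookkeeping; the one point demanding care is tracking how the $TM$-component $f\phi(u)$ of $\huaB(fu)$ produces the derivative term $f\phi(u)(g)v$ and confirming it reproduces exactly the defining formula~\eqref{action21} of the action post-Lie product. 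Together these identifications show that the splitting post-Lie algebroid of the action Rota-Baxter Lie algebroid is exactly the action post-Lie algebroid of the splitting post-Lie algebra, as claimed.
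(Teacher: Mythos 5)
Your proposal is correct and follows essentially the same route as the paper: invoke Lemma~\ref{lem:repRBPL} so the action post-Lie algebroid is defined, then compute $(fu)\rhd_A(gv)=[fB(u)+f\phi(u),gv]_\huaA=fg[B(u),v]_\g+f\phi(u)(g)v$ and $a_\huaA\circ\huaB=\phi$, matching the data of \eqref{action11}--\eqref{action21}. Your explicit check that the restricted bracket is $fg[u,v]_\g$ is a small bookkeeping point the paper leaves implicit, but the argument is the same.
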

\begin{proof}
By Lemma \ref{lem:repRBPL}, $\phi:(\g,[\cdot,\cdot]_B)\lon \frkX(M)$ is also an action of the underlying post-Lie algebra $(\g,[\cdot,\cdot]_\g,\rhd)$ on $M$.
By Theorem \ref{thm:rbpost}, the induced post-Lie algebroid is given by $(M\times \g,[\cdot,\cdot]_\g,\rhd_A, a_A)$, where for all$~f,g\in C^\infty(M),u,v\in \g$,
\begin{eqnarray*}
  (fu)\rhd_A(gv)&:=&[fB(u)+f\phi(u),gv]_\huaA=fg[B(u), v]_\g+f\phi(u)(g)v=fg u\rhd v +f\phi(u)(g)v,\\
  a_A(u,m)&:=&a_\huaA\circ\huaB(u,m)=\phi(u)(m),
\end{eqnarray*}
which is exactly the action post-Lie algebroid of the post-Lie algebra  $(\g,[\cdot,\cdot]_\g,\rhd)$ underlying the Rota-Baxter Lie algebra  $(\g,[\cdot,\cdot]_\g,B)$.
 \end{proof}

\begin{rmk} \label{rk:actionPL2}
  A post-Lie algebra $(\g,[\cdot,\cdot]_\g,\rhd)$ gives rise to a subjacent Lie algebra $(\g,\Courant{\cdot,\cdot})$ in Eq~\eqref{eq:courant}. Similarly, it was shown in \cite{Munthe-Kaas-Lundervold} that a post-Lie algebroid $(A,[\cdot,\cdot]_A,\rhd_A, a_A)$ gives rise to a {\bf subjacent} Lie algebroid $(A,\Courant{\cdot,\cdot}_A, a_A)$, where the Lie bracket $\Courant{\cdot,\cdot}_A$ is given by
  $$
  \Courant{u,v}_A=[u,v]_A+u\rhd_A v-v\rhd_A u.
  $$
 It is straightforward  to see that when the post-Lie algebroid comes from the action $\phi$ of a post-Lie algebra $(\g,[\cdot,\cdot]_\g,\rhd)$, this subjacent Lie algebroid $(A,\Courant{\cdot,\cdot}_A, a_A)$ is exactly the action Lie algebroid   of the subjacent Lie algebra $(\g,\Courant{\cdot,\cdot})$.
See \cite[Theorem 4.4]{MSV} for the   necessary
and sufficient conditions for a Lie algebroid admitting a post-Lie algebroid structure.
\end{rmk}

By Corollary~\ref{cor:actionPL} and Remark~\ref{rk:actionPL2}, we have the following commutative diagram:
\begin{equation}
\begin{split}
\xymatrix{
 \text{Rota-Baxter Lie algebra} \ar^{\text{splitting}}[rr]   \ar_{\text{action}}[d] && \text{ post-Lie algebra}  \ar^{\text{action}}[d]\ar^{\text{subjacent}}[rr]&&\text{Lie algebra} \ar_{\text{action}}[d]\\
\text{Rota-Baxter Lie algebroid} \ar^{\text{splitting}}[rr] && \text{ post-Lie algebroid}\ar^{\text{subjacent}}[rr]&&\text{Lie algebroid.}
}
\end{split}
\mlabel{eq:actionsplitting}
\end{equation}

\subsection{Rota-Baxter Lie groupoids and integration of Rota-Baxter Lie algebroids}

We now introduce the notion of a Rota-Baxter Lie groupoid and show that the differentiation of a Rota-Baxter Lie groupoid is a Rota-Baxter Lie algebroid. Moreover, we introduce the notion of actions of Rota-Baxter Lie groups, from which we construct action Rota-Baxter Lie groupoids.

Recall that a groupoid~\cite{Mkz:GTGA} is a small category such that every arrow is invertible. Explicitly,
\begin{defi}
A {\bf groupoid} is a pair $(\G,M)$, where $M$ is the set of objects and $\G$ is the set of arrows, with the  structure maps
\begin{itemize}
\item two surjective maps $s,t: \G\longrightarrow M$, called the source map and target map, respectively;
\item  the multiplication $\cdot:\G^{(2)}\longrightarrow \G$, where $\G^{(2)}=\{(g_1,g_2)\in \G\times \G| s(g_1)=t(g_2)\}$;
\item  the inverse map $(\cdot)^{-1}:\G\longrightarrow \G$;
\item the inclusion map $\iota: M\longrightarrow \G$, called the identity map;
\end{itemize}
satisfying the following properties:
\begin{enumerate}
\item \rm{(associativity)} $(g_1\cdot g_2)\cdot g_3=g_1\cdot (g_2\cdot g_3)$, whenever the multiplications are well-defined;
\item \rm{(unitality)} $\iota(t(g))\cdot g=g=g\cdot \iota(s(g))$;
\item  \rm{(invertibility)} $g\cdot g^{-1}=\iota(t(g))$, $g^{-1}\cdot g=\iota(s(g))$.
\end{enumerate}
We also denote a groupoid by $(\G\rightrightarrows M,s,t)$ or simply by $\G$.

A {\bf Lie groupoid} is a groupoid such that both the set of objects and the set of arrows  are smooth manifolds, all structure maps are smooth, and the source and target maps are surjective submersions.
\end{defi}

 The tangent space of a Lie group at the identity has a Lie algebra structure. As its geometrization, on the vector bundle $\huaA:=\ker(t_*)|_M\longrightarrow M$ from a Lie groupoid,    there is a Lie algebroid structure defined as follows (\cite{Mkz:GTGA}):
the anchor map $a_\huaA:\huaA\longrightarrow TM$ is simply $s_*$ and the Lie bracket $[u,v]_\huaA$ is determined by
\[\overleftarrow{[u,v]_\huaA}=-[\overleftarrow{u},\overleftarrow{v}]_{T\G},\qquad \forall~u,v\in \Gamma(\huaA),\]
where $\overleftarrow{u}$ denotes the left-invariant vector field on $\G$ given by $\overleftarrow{u}_g=L_{g_*}u_{s(g)}$.

Denote by $\mathcal{O}_m:=s\circ t^{-1}(m)$ for $m\in M$. When $\mathcal{O}_m=M$ for $m\in M$, we call the Lie groupoid a {\bf transitive Lie groupoid}. The Lie algebroid associated to a transitive Lie groupoid is a transitive Lie algebroid.

Let $\G\rightrightarrows M$ be a transitive Lie groupoid. Its {\bf isotropy group} at $m\in M$ is defined to be \[\mathcal{H}_m:=s^{-1}(m)\cap t^{-1}(m).\] Denote by
$\mathcal{H}$ the bundle of Lie groups  over $M$ whose fiber at $m$ is the Lie group $\mathcal{H}_m$.

\begin{defi}
A {\bf Rota-Baxter operator} on a transitive Lie groupoid $\G\rightrightarrows M$ is a map $\B:\huaH\to \G$ covering the identity map on $M$ with respect to the target map, that is, $t\circ \B=t$,  satisfying
\begin{equation}\label{eq:conRBGoid}
  \B(g)\B(h)=\B(g\Ad_{\B(g)} h),\qquad \forall~g,h\in \huaH, ~\mbox{such  that}~ s(\B(g))=t(h).
\end{equation}
\end{defi}
The requirements $t(\B(g))=t(g), t(\B(h))=t(h)$ and $s(\B(g))=t(h)$ are to ensure that the multiplications appeared in the above formula are well defined.

\begin{pro}
 A Rota-Baxter operator $\B$ on a Lie groupoid $\G$ associates with $\huaH$ a new Lie groupoid structure,  called the {\bf \desc Lie groupoid}, given by
\begin{eqnarray*}
  \widetilde{s}(g)&:=&s(\B(g)),\\
   \widetilde{t}(g)&:=&t(g),\\
   g\star h&:=&g\Ad_{\B(g)} h, \quad \forall g,h\in \huaH ~s.t.~ s(\B(g))=t(h).
\end{eqnarray*}
\end{pro}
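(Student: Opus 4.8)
The plan is to verify that the stated data define a groupoid over $M$: first that the $\star$-product of two composable elements again lies in $\huaH$ with the correct new source and target, then that units and inverses exist, then associativity, and finally that all structure maps are smooth with $\widetilde{s},\widetilde{t}$ surjective submersions, upgrading the groupoid to a Lie groupoid. Throughout, the computations run parallel to those for the descendent group $(G,\ast)$ in Proposition~\ref{G*}, the new feature being that $\widetilde{s}(g)=s(\B(g))$ and $\widetilde{t}(g)=t(g)$ now genuinely differ, so $\huaH$ becomes an honest groupoid over $M$ rather than a bundle of groups. Concretely, I would fix $g\in\huaH$ with $t(g)=s(g)=m$ and record $\B(g)$ as an arrow from $n:=s(\B(g))$ to $m$ (using $t\circ\B=t$). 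For $h\in\huaH$ with $\widetilde{s}(g)=\widetilde{t}(h)$, that is $t(h)=n$, a bookkeeping of sources and targets in $\G$ shows $\Ad_{\B(g)}h=\B(g)h\B(g)^{-1}\in\huaH_m$ and hence $g\star h=g\,\Ad_{\B(g)}h\in\huaH_m$, so $\widetilde{t}(g\star h)=t(g)=\widetilde{t}(g)$. The decisive point is the new source: applying the groupoid Rota-Baxter relation~\eqref{eq:conRBGoid} gives $\B(g\star h)=\B(g)\B(h)$, whence $\widetilde{s}(g\star h)=s(\B(g)\B(h))=s(\B(h))=\widetilde{s}(h)$, exactly the source/target compatibility demanded of a groupoid multiplication.

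Next I would produce units and inverses. Paralleling the group-level identity $\B(e)=e$, relation~\eqref{eq:conRBGoid} gives $\B(\iota(m))=\iota(m)$, so that $\iota\colon M\to\huaH$ satisfies $\widetilde{s}(\iota(m))=\widetilde{t}(\iota(m))=m$; then, since $\Ad_{\iota(m)}=\id$, short calculations yield $\iota(m)\star g=g$ and $g\star\iota(m)=g$ whenever these are defined, so $\iota$ is the unit map. For inverses I would take the groupoid analog of $g^{\dag}$ from Proposition~\ref{G*}, namely $g^{\dag}:=\Ad_{\B(g)^{-1}}g^{-1}\in\huaH_n$. Using $\Ad_{\B(g)}\Ad_{\B(g)^{-1}}=\id$ one checks $g\star g^{\dag}=g\,g^{-1}=\iota(m)=\iota(\widetilde{t}(g))$; applying $\B$ to this and using~\eqref{eq:conRBGoid} forces $\B(g^{\dag})=\B(g)^{-1}$, and then $g^{\dag}\star g=\Ad_{\B(g)^{-1}}(g^{-1}g)=\iota(n)=\iota(\widetilde{t}(g^{\dag}))$, so $g^{\dag}$ is the two-sided $\star$-inverse.

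Associativity I would verify exactly as in the proof of Proposition~\ref{G*}: expanding $(g\star h)\star k$ and $g\star(h\star k)$ and using the groupoid versions of the adjoint identities~\eqref{eq:f1}--\eqref{eq:f2} together with $\B(g\star h)=\B(g)\B(h)$ from~\eqref{eq:conRBGoid}, both sides collapse to $g\,\B(g)\,h\,\B(h)\,k\,\B(h)^{-1}\B(g)^{-1}$. I expect the main obstacle to be not any single algebraic identity but the systematic checking that every product written down is genuinely composable, i.e.\ that the matching of sources and targets in the ambient groupoid $\G$ holds at each rewriting step; unlike in the group case these conditions are nonvacuous, and it is precisely~\eqref{eq:conRBGoid} that keeps the rewritten source $\widetilde{s}(g\star h)=\widetilde{s}(h)$ consistent. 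Finally, to conclude that we have a Lie groupoid I would observe that $\star$, the inverse $g\mapsto g^{\dag}$, and $\iota$ are smooth, being built from the smooth structure maps of $\G$ and the smooth map $\B$, that $\widetilde{t}=t|_{\huaH}$ is the surjective submersion $\huaH\to M$, and that $\widetilde{s}=s\circ\B$ is a surjective submersion; establishing this last submersion property is where the smooth, as opposed to merely algebraic, content of the statement resides.
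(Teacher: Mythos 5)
Your algebraic computations for closure, the source/target compatibility $\widetilde{t}(g\star h)=\widetilde{t}(g)$ and $\widetilde{s}(g\star h)=\widetilde{s}(h)$, the associativity (both sides do collapse to $g\,\B(g)\,h\,\B(h)\,k\,\B(h)^{-1}\B(g)^{-1}$), and the inverse $g^{\dag}=\Ad_{\B(g)^{-1}}g^{-1}$ are correct and constitute the ``direct verification'' that the paper omits entirely (its proof is one line). There is, however, a genuine gap at the unit step. You assert that \eqref{eq:conRBGoid} gives $\B(\iota(m))=\iota(m)$ ``paralleling the group-level identity $\B(e)=e$''. In the group case one substitutes $g_1=g_2=e$ into \eqref{RBgroup} with no side condition; in the groupoid case the instance $g=h=\iota(m)$ of \eqref{eq:conRBGoid} is only available when $s(\B(\iota(m)))=t(\iota(m))=m$, which is precisely the identity $\widetilde{s}(\iota(m))=m$ you need. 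What the relation actually yields is weaker: taking any $g\in\huaH_m$ and $h=\iota(n)$ with $n=s(\B(g))$, one has $g\,\Ad_{\B(g)}\iota(n)=g$, hence $\B(g)\B(\iota(n))=\B(g)$ and so $\B(\iota(n))=\iota(n)$ --- but only for $n$ in the image of $\widetilde{s}=s\circ\B$, not for every $n\in M$. The same circularity then infects your derivation of $\B(g^{\dag})=\B(g)^{-1}$, which uses $\B(\iota(m))=\iota(m)$.

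This is not a removable technicality, because the stated definition does not force $\widetilde{s}$ to be surjective. On the pair groupoid $\G=M\times M$ (transitive, with $\huaH\cong M$ consisting only of units), a map $\B(\iota(m))=(m,f(m))$ satisfies $t\circ\B=t$ and \eqref{eq:conRBGoid} if and only if $f\circ f=f$; taking $M=\RR$ and $f\equiv 0$ gives an operator for which $\widetilde{s}$ is the constant map $0$, there are no $\star$-units or $\star$-inverses over any $m\neq 0$, and the descendent structure is not a groupoid over $M$. Thus the surjective-submersion property of $\widetilde{s}=s\circ\B$, which you correctly flag at the end but merely ``observe'', is in fact equivalent to $\B\circ\iota=\iota$ and must either be added as a hypothesis or derived from assumptions not present in the definition as given. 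To be fair, this defect is inherited from the paper, whose proof is only ``direct verification''; but a complete write-up should isolate $\B\circ\iota=\iota$ as the one unprovable ingredient, note that it holds in all the examples treated (e.g.\ the action Rota-Baxter Lie groupoid, where $\B_0(e)=e$ and $\B_1(e,m)=m$), and observe that it is also what Theorem \ref{thm:algebroid} implicitly needs when it takes $s_*\circ\huaB$ as the anchor of the descendent algebroid.
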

\begin{proof}
   It follows from a direct verification.
\end{proof}

Denote this Lie groupoid by $(\huaH,\star)$.

\begin{thm} \mlabel{thm:algebroid}
Let $(\G,\B)$ be a transitive Rota-Baxter Lie groupoid. Let $\huaA=\ker (t_*)|_M$ be the Lie algebroid of $\G$ and $\huaB:=\B_{*}:\ker(a_\huaA)\longrightarrow \huaA$ the tangent map of $\B$ at the identity. Then $(\huaA,\huaB)$ is a  Rota-Baxter Lie algebroid.
\end{thm}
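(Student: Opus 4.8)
The plan is first to verify that $\huaB$ is a genuine bundle map $\ker(a_\huaA)\to\huaA$ covering the identity, and then to establish the Rota-Baxter identity \eqref{eq:RB1} with $\lambda=1$. For the first point, differentiating the defining relation $t\circ\B=t$ at the units gives $t_*\circ\B_*=t_*$; and for $u\in\ker(a_\huaA)_m=\Lie(\huaH_m)$ the curve $\B(\exp^{tu})$ lies in $t^{-1}(m)$, so its derivative $\huaB(u)=\B_*(u)$ lies in $\ker(t_*)_{\iota(m)}=\huaA_m$. Together with $\B(\iota(m))=\iota(m)$, which follows from \eqref{eq:conRBGoid} exactly as $\B(e)=e$ does in the group case, this makes $\huaB$ a bundle map over the identity with the correct source and target bundles.

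The main structural idea is that $\B$ is a \emph{morphism of Lie groupoids} from the \desc groupoid $(\huaH,\star)$ to $\G$ over $\id_M$. Indeed, the \desc multiplication is $g\star h=g\Ad_{\B(g)}h$ with source $\widetilde s=s\circ\B$ and target $\widetilde t=t$, and the Rota-Baxter relation \eqref{eq:conRBGoid} reads precisely $\B(g\star h)=\B(g)\B(h)$ whenever $\widetilde s(g)=\widetilde t(h)$, so $\B$ is multiplicative; moreover $s\circ\B=\widetilde s$ and $t\circ\B=\widetilde t$, so $\B$ covers $\id_M$ on objects and preserves units. Applying the Lie functor then yields a Lie algebroid morphism $\B_*=\huaB$ over $\id_M$. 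The source algebroid $\Lie(\huaH,\star)$ has underlying bundle $\ker(\widetilde t_*)|_M$, which coincides with $\ker(a_\huaA)$ since the $\widetilde t$-fibre through $\iota(m)$ is the isotropy group $\huaH_m$; its anchor is $\widetilde s_*=a_\huaA\circ\huaB$, and I claim its bracket is the \desc bracket
\[
[u,v]_{\huaB}:=[\huaB(u),v]_\huaA+[u,\huaB(v)]_\huaA+[u,v]_\huaA,
\]
the algebroid analogue of \eqref{Bbr}. Granting this, the assertion that $\huaB$ is an algebroid morphism over $\id_M$ is exactly $\huaB([u,v]_{\huaB})=[\huaB(u),\huaB(v)]_\huaA$, which is \eqref{eq:RB1} with $\lambda=1$.

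Computing the bracket of $\Lie(\huaH,\star)$ is therefore the crux, and here I would argue pointwise. By the same $C^\infty(M)$-linearity computation as in the remark following~\eqref{eq:RB1}, both sides of the desired identity transform tensorially under $u\mapsto fu$, so their difference is $C^\infty(M)$-bilinear and it suffices to verify the identity at each $m\in M$ for $u,v\in\ker(a_\huaA)_m$. Fixing $m$, the restriction of $[\cdot,\cdot]_\huaA$ to $\ker(a_\huaA)$ is $C^\infty(M)$-linear by \eqref{eq:LAf} and recovers the isotropy Lie algebra structure of $\huaH_m$, so I can run the differentiation of Theorem~\ref{main1} and Proposition~\ref{G*}: using \eqref{eq:expo}, the identity $\frac{d}{dt}\big|_{t=0}\B(\exp^{tu})=\huaB(u)$, the adjoint formulas \eqref{eq:f1}--\eqref{eq:f2}, and the groupoid relation \eqref{eq:conRBGoid} applied twice, I expand $\frac{d^2}{dt\,ds}\big|_{0}\,\B(\exp^{tu})\B(\exp^{sv})\B(\exp^{-tu})$ and read off $[\huaB(u),\huaB(v)]_\huaA$ as $\huaB$ applied to $[u,v]_{\huaB}$.

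The essential obstacle, absent in the group case, is \emph{composability}: the products $\B(g)\B(h)$ in \eqref{eq:conRBGoid} are defined only when $s(\B(g))=t(h)$, and the source $s(\B(\exp^{tu}))$ drifts away from $m$ as $t$ varies, so the three factors above do not multiply inside a single isotropy group. The remedy is to use the fibrewise exponential of the bundle of isotropy groups $\huaH$, replacing $\exp^{sv}$ by the isotropy exponential of a local section $v$ evaluated at the moving base point $s(\B(\exp^{tu}))$; this keeps every product well defined while agreeing to the required order in $t,s$ at the unit. The anchor terms $a_\huaA(\huaB(u))=\frac{d}{dt}\big|_{t=0}s(\B(\exp^{tu}))$ are exactly what record this base-point motion, and they cancel in the final tensorial combination, leaving precisely $\huaB([u,v]_{\huaB})=[\huaB(u),\huaB(v)]_\huaA$. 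Checking that this fibrewise construction genuinely computes the bracket of $\Lie(\huaH,\star)$, and hence that it equals $[\cdot,\cdot]_{\huaB}$, is the step demanding the most care.
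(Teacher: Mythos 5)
Your proposal follows essentially the same route as the paper's proof: identify the Lie algebroid of the \desc{} groupoid $(\huaH,\star)$ as $(\ker(a_\huaA),[\cdot,\cdot]_\huaB,a_\huaA\circ\huaB)$ by running the differentiation argument of Proposition~\ref{G*} fibrewise, observe that the Rota-Baxter relation \eqref{eq:conRBGoid} says exactly that $\B:(\huaH,\star)\to\G$ is a Lie groupoid morphism over $\id_M$, and read off \eqref{eq:RB1} from the induced Lie algebroid morphism. In fact you are more explicit than the paper about the two delicate points it leaves implicit --- the reduction to a pointwise computation via $C^\infty(M)$-bilinearity and the composability of the products $\B(\exp^{tu})\B(\exp^{sv})\B(\exp^{-tu})$ as the base point drifts --- which the paper dispatches with ``following the same proof as in Proposition~\ref{G*}.''
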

\begin{proof}
We first claim that the Lie algebroid of $(\huaH,\star)$ is $(\ker(a_\huaA),[\cdot,\cdot]_\huaB,a_\huaA\circ \huaB)$, where
\[[u,v]_\huaB=[\huaB(u),v]_\huaA+[u,\huaB(v)]_\huaA+[u,v]_\huaA,\qquad \forall u,v\in \Gamma(\ker(a_\huaA)).\]
Note that $\ker (\tilde{t}_*)|_M=\ker (t_*)|_M=\ker (a_\huaA)$ and $\tilde{s}_*=s_*\circ \huaB$. Then following the same proof as in Proposition \ref{G*}, we find that the Lie bracket on $\ker (a_\huaA)$ of the Lie groupoid structure $\star$ is $[\cdot,\cdot]_\huaB$.
Moreover, it is direct to show that $\B:(\huaH,\star)\to \G$ is a Lie groupoid homomorphism, which induces a Lie algebroid homomorphism $\huaB:(\ker(a_\huaA), [\cdot,\cdot]_\huaB,a_\huaA\circ \huaB)\to \huaA$. This implies that
\[[\huaB(u),\huaB(v)]_\huaA=\huaB([\huaB(u),v]_\huaA+[u,\huaB(v)]_\huaA+[u,v]_\huaA).\]
So $(\huaA,\huaB)$ is a Rota-Baxter Lie algebroid.
\end{proof}

Assume that a Lie group $G$ left acts on a manifold $M$. Then we obtain a Lie groupoid $G\times M\rightrightarrows M$, whose source, target maps and multiplication are:
\[s(g,m):=m,\qquad t(g,m):=g\cdot m,\qquad (h,n)(g,m):=(hg,m),\]
for $n=g\cdot m$. This Lie groupoid is called the {\bf action Lie groupoid}, whose Lie algebroid is the action Lie algebroid.
Make the above phrase more precise, so that we have a commutative diagram
\begin{equation}
\begin{split}
\xymatrix{
	\text{actions of Lie groups on } M \ar^{\text{\qquad action}}[rr]   \ar_{\text{differentiation}}[d] && \text{Lie groupoids}  \ar^{\text{differentiation}}[d]\\
	\text{ actions of Lie algebras on } M \ar^{\text{\qquad action}}[rr] && \text{Lie algebroids.}
}
\end{split}
\mlabel{eq:lieaction}
\end{equation}

Suppose that the action is transitive. The isotropy group at $m\in M$ is $$\huaH_m=\{g\in G\,|\,g\cdot m=m\}.$$

Consider the Rota-Baxter operator on the action Lie groupoid.
\begin{pro}
A map   $\B: \huaH\to G\times M$ is a Rota-Baxter operator of weight $1$ on the action Lie groupoid  if and only if $\B_m: \huaH_m\to G\times \{m\}\cong G$ given by $\B(g,m)=(\B_m(g),\B_m(g)^{-1}m)$ satisfies
\begin{eqnarray}\label{actionRB}
\B_m(g)\B_n(h)=\B_m(g\Ad_{\B_m(g)} h),\qquad \forall g\in \huaH_m, h\in \huaH_n, n=\B_m(g)^{-1}m.
\end{eqnarray}
\end{pro}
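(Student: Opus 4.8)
The plan is to unwind the groupoid Rota-Baxter relation \eqref{eq:conRBGoid} in the action groupoid $G\times M$ and watch it collapse, component by component, to the single identity \eqref{actionRB}. First I would record that the target-covering requirement $t\circ\B=t$ is exactly what forces the stated parametrization: since $t(\B(g,m))$ must equal $t(g,m)=m$, and every arrow of $G\times M$ with target $m$ has the form $(a,a^{-1}m)$ for a unique $a\in G$, writing $a=\B_m(g)$ gives $\B(g,m)=(\B_m(g),\B_m(g)^{-1}m)$ for a smooth family $\B_m\colon\huaH_m\to G$ (note that $\B_m(g)$ itself need not lie in $\huaH_m$). Computing the source, $s(\B(g,m))=\B_m(g)^{-1}m$ while $t(h,n)=n$, so the domain condition $s(\B(g,m))=t(h,n)$ in \eqref{eq:conRBGoid} reads precisely $n=\B_m(g)^{-1}m$, matching the hypothesis of \eqref{actionRB}.

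Next, using $(h,n)(g,m)=(hg,m)$ and the inverse formula $(a,p)^{-1}=(a^{-1},a\cdot p)$ (which follows from the groupoid axioms), I would compute both sides of \eqref{eq:conRBGoid} directly. The left-hand side is $\B(g,m)\B(h,n)=(\B_m(g)\B_n(h),\,\B_n(h)^{-1}n)$. For the right-hand side, first $\B(g,m)^{-1}=(\B_m(g)^{-1},m)$, so conjugation yields $\Ad_{\B(g,m)}(h,n)=\B(g,m)(h,n)\B(g,m)^{-1}=(\Ad_{\B_m(g)}h,\,m)$, where $\Ad_{\B_m(g)}h=\B_m(g)h\B_m(g)^{-1}$ is now the ordinary adjoint in $G$. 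Multiplying inside the isotropy fiber then gives $(g,m)\,\Ad_{\B(g,m)}(h,n)=(g\Ad_{\B_m(g)}h,\,m)$, and applying $\B$ produces the right-hand side $\bigl(\B_m(g\Ad_{\B_m(g)}h),\,\B_m(g\Ad_{\B_m(g)}h)^{-1}m\bigr)$.

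Finally I would compare the two $G\times M$-components. Equality of the first (group) components is exactly \eqref{actionRB}. The second ($M$-)components are $\B_n(h)^{-1}n$ versus $\B_m(g\Ad_{\B_m(g)}h)^{-1}m$; since $n=\B_m(g)^{-1}m$ turns $\B_n(h)^{-1}n$ into $(\B_m(g)\B_n(h))^{-1}m$, equality of the first components automatically forces equality of the second, and conversely equality of arrows demands equality of the first. Hence \eqref{eq:conRBGoid} holds if and only if \eqref{actionRB} does. The hard part is purely organizational: tracking which groupoid products are defined and how the base points move under $s$, $t$, and conjugation — in particular verifying that $\Ad_{\B(g,m)}$ carries the fiber $\huaH_n$ onto $\huaH_m$ so that the entire right-hand side lands back in $\huaH$. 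Once this bookkeeping is in place, the reduction to \eqref{actionRB} and the automatic consistency of the $M$-component are immediate.
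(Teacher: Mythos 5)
Your proposal is correct and follows essentially the same route as the paper: parametrize $\B$ via the target-covering condition, compute both sides of Eq.~\eqref{eq:conRBGoid} componentwise in $G\times M$, and observe that the identity reduces to Eq.~\eqref{actionRB}. You are in fact slightly more careful than the paper in noting explicitly that equality of the $G$-components forces equality of the $M$-components (needed for the ``if'' direction) and in justifying why $t\circ\B=t$ dictates the form $\B(g,m)=(\B_m(g),\B_m(g)^{-1}m)$, but these are refinements of the same argument rather than a different approach.
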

Taking the differential, we get Proposition \ref{actioncase}.
\begin{proof}
   In fact, let $\tilde{g}=(g,m)\in \huaH_m$ and $\tilde{h}=(h,n)\in \huaH_n$ such that
\[s(\B(\tilde{g}))=\B_m(g)^{-1}m=t(\tilde{h})=n.\]
Note that $gm=m$ and $hn=n$. Then we have
\[\B(\tilde{g})\B(\tilde{h})=(\B_m(g),\B_m(g)^{-1}m)(\B_n(h),\B_n(h)^{-1}n)=(\B_m(g)\B_n(h),\B_n(h)^{-1}n)\]
and
\begin{eqnarray*}
\B(\tilde{g}\Ad_{\B(\tilde{g})} \tilde{h})&=&\B((g,m)(\B_m(g),\B_m(g)^{-1}m)(h,n)(\B_m(g)^{-1},m))\\ &=&
\B((g\B_m(g)h\B_m(g)^{-1},m))\\ &=&
(\B_m(g\Ad_{\B_m(g)} h),\B_m(g\Ad_{\B_m(g)} h)^{-1}m).
\end{eqnarray*}
Thus, $\B$ is a Rota-Baxter operator if and only if \eqref{actionRB} holds.
\end{proof}

Finally we study the groupoid analog of the action Rota-Baxter Lie algebroid given in Proposition \ref{pro:actionRB}. We give the definition of actions of Rota-Baxter Lie groups as follows.
\begin{defi}
  An {\bf action of a Rota-Baxter Lie group} $(G,\B_0)$ on a manifold $M$ is defined to be a right action $\B_1$ of the \desc  Lie group $(G,*)$  on $M$, where the group multiplication $*$ is given by \eqref{*}.
\end{defi}

Let $\B_1$ be an {action of a Rota-Baxter Lie group} $(G,\B_0)$ on a manifold $M$. Consider the  Lie groupoid $\G:=M\times G\times M\rightrightarrows M$,  where the source, target and the multiplication of this Lie groupoid are given by
\[s(m,g,n):=n,\qquad t(m,g,n):=m,\qquad (m',h,n')\cdot (m,g,n):=(m',hg,n),\]
when $n'=m$. It is easy to see that the bundle of isotropy groups is $ G\times M\to M$. The inclusion of $G\times M$ in $\G$ is given by $(g,m)\hookrightarrow (m,g,m)$.

Define a map $\B:  G\times M \to M\times G\times M$   by
\[\B(g,m)=(m,\B_0(g),\B_1(g,m)).\]

\begin{pro}
  Let $\B_1$ be an {action of a Rota-Baxter Lie group} $(G,\B_0)$ on a manifold $M$. Then the map $\B$ defined above is a Rota-Baxter operator on the Lie groupoid $\G=M\times G\times M\rightrightarrows M$.
\end{pro}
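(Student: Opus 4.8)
The plan is to verify the two defining conditions for a Rota-Baxter operator on a transitive Lie groupoid in \eqref{eq:conRBGoid}: that $\B$ covers the identity with respect to the target map, and that it satisfies the Rota-Baxter relation on composable isotropy elements. The covering condition is immediate: an element of the isotropy fibre $\huaH_m$ is $(g,m)$, included in $\G$ as $(m,g,m)$ with target $m$, and $t(\B(g,m))=t(m,\B_0(g),\B_1(g,m))=m$, so $t\circ\B=t$. For the Rota-Baxter relation I would take $\tilde g=(g,m)\in\huaH_m$ and $\tilde h=(h,n)\in\huaH_n$ and first unwind the compatibility constraint $s(\B(\tilde g))=t(\tilde h)$. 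Since $\B(\tilde g)=(m,\B_0(g),\B_1(g,m))$ has source $\B_1(g,m)$ and $\tilde h$ has target $n$, this constraint reads $n=\B_1(g,m)$, the groupoid analogue of the base-point condition appearing in the action-groupoid computation above.

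Next I would compute the left-hand side $\B(\tilde g)\B(\tilde h)$ using the groupoid product $(m',h',n')\cdot(m'',g',n'')=(m',h'g',n'')$, valid when $n'=m''$. With $n=\B_1(g,m)$ the factors $(m,\B_0(g),n)$ and $(n,\B_0(h),\B_1(h,n))$ are composable and multiply to $(m,\B_0(g)\B_0(h),\B_1(h,n))$.

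For the right-hand side I would first evaluate the groupoid adjoint $\Ad_{\B(\tilde g)}\tilde h=\B(\tilde g)\,\tilde h\,\B(\tilde g)^{-1}$. Using the groupoid inverse $(m',g',n')^{-1}=(n',(g')^{-1},m')$, this equals $(m,\B_0(g),n)(n,h,n)(n,\B_0(g)^{-1},m)=(m,\Ad_{\B_0(g)}h,m)$; composing with $\tilde g=(m,g,m)$ gives $\tilde g\,\Ad_{\B(\tilde g)}\tilde h=(m,g\Ad_{\B_0(g)}h,m)=(m,g*h,m)$, whence $\B(\tilde g\,\Ad_{\B(\tilde g)}\tilde h)=(m,\B_0(g*h),\B_1(g*h,m))$.

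It then remains to match the two triples. The $G$-components agree because $\B_0(g*h)=\B_0(g\Ad_{\B_0(g)}h)=\B_0(g)\B_0(h)$, which is exactly the group Rota-Baxter relation \eqref{RBgroup}, equivalently Proposition \ref{G*}\eqref{it:G*3} that $\B_0\colon(G,*)\to G$ is a homomorphism. The base-point components agree because $\B_1$ is a right action of $(G,*)$, so $\B_1(g*h,m)=\B_1(h,\B_1(g,m))=\B_1(h,n)$. Hence both sides equal $(m,\B_0(g)\B_0(h),\B_1(h,n))$, establishing \eqref{eq:conRBGoid}. I expect no conceptual difficulty here; the only real care is the groupoid-level bookkeeping—correctly evaluating the inverse and the conjugation $\Ad_{\B(\tilde g)}$ and tracking all source/target compatibilities—after which the identity collapses onto these two algebraic facts.
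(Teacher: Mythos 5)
Your proof is correct and follows essentially the same route as the paper: unwind the composability constraint to $n=\B_1(g,m)$, compute both sides of \eqref{eq:conRBGoid} explicitly in the triple notation, and reduce the identity to the Rota-Baxter relation for $\B_0$ together with the right-action property of $\B_1$ for $(G,*)$. Your explicit verification of the covering condition $t\circ\B=t$ is a small addition the paper leaves implicit, but the substance of the argument is identical.
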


This Rota-Baxter Lie groupoid is called the {\bf action Rota-Baxter Lie groupoid}.

\begin{proof}
Let $\tilde{g}=(g,m), \tilde{h}=(h,n)\in G\times M$ such that
\[s(\B(\tilde{g}))=\B_1(g,m)=t(\tilde{h})=n.\] Then we have
\[\B(\tilde{g})\B(\tilde{h})=(m,\B_0(g),\B_1(g,m))(n,\B_0(h),\B_1(h,n))=(m,\B_0(g)\B_0(h),\B_1(h,\B_1(g,m)))\]
and
\begin{eqnarray*}
\B(\tilde{g}\Ad_{\B(\tilde{g})} \tilde{h})&=&\B((m,g,m)(m,\B_0(g),\B_1(g,m))(n,h,n)(\B_1(g,m),\B_0(g)^{-1},m))\\ &=&
\B((m,g\B_0(g)h\B_0(g)^{-1},m))\\ &=&(m,\B_0(g\Ad_{\B_0(g)} h),\B_1(g\Ad_{\B_0(g)} h,m)).
\end{eqnarray*}
Thus, $\B(\tilde{g})\B(\tilde{h})=\B(\tilde{g}\Ad_{\B(\tilde{g})} \tilde{h})$ holds if and only if
\begin{eqnarray*}
\B_0(g)\B_0(h)&=&\B_0(g\Ad_{\B_0(g)} h),\\
\B_1(h,\B_1(g,m))&=&\B_1(g\Ad_{\B_0(g)} h,m),
\end{eqnarray*}
which implies that $\B_0$ is a Rota-Baxter operator on the Lie group $G$ and $\B_1$ defines a right action of the Lie group $(G,*)$  on $M$.
\end{proof}

In summary, the diagram in \eqref{eq:lieaction} can be enriched to the diagram
\begin{eqnarray*}
\begin{split}
\xymatrix{
	\text{actions of Rota-Baxter Lie groups on } M \ar^{\text{\qquad action}}[rr]   \ar_{\text{differentiation}}[d] && \text{Rota-Baxter Lie groupoids}  \ar^{\text{differentiation}}[d]\\
	\text{actions of Rota-Baxter Lie algebras on } M \ar^{\text{\qquad action}}[rr] && \text{Rota-Baxter Lie algebroids.}
}
\end{split}
\mlabel{eq:rblieaction}
\end{eqnarray*}	

\noindent
{\bf Acknowledgements. } This research is supported by NSFC (11922110, 11771190, 11901568).

\end{document}